\numberwithin{equation}{section}
\theoremstyle{definition}
\newtheorem{theorem}{Theorem}[section]
\newtheorem{lemma}[theorem]{Lemma}
\newtheorem{*theorem}[theorem]{$\bullet$ Theorem}
\newtheorem{proposition}[theorem]{Proposition}
\newtheorem{corollary}[theorem]{Corollary}
\newtheorem{*corollary}[theorem]{$\bullet$ Corollary}
\newtheorem*{definition*}{Definition}
\theoremstyle{remark}
\newcounter{rem}
\newtheorem{remark}[rem]{Remark}
\newtheorem{remark*}[rem]{$\star$ Remark}
\newcommand{\norm}[1]{\lVert#1\rVert}
\newcommand{\abs}[1]{\lvert#1\rvert}
\title{{\bf Non-axially symmetric solutions of a mean field equation on $\mathbb{S}^2$}}
\author{Changfeng Gui}
\address{Department of Mathematics, University of Texas, San Antonio, TX, 78249}
\email{\tt changfeng.gui@utsa.edu}
\author{Yeyao Hu}
 \address{Department of Mathematics,
    University of Texas, San Antonio, TX 78249}
\email{\tt yeyao.hu@utsa.edu}
\date{}
\begin{document}
\maketitle

\begin{quote}
{ \textbf{Abstract}}: We prove the existence of a family of blow-up solutions of a mean field equation on sphere. The solutions blow up at four points where the minimum value of a potential energy function (involving the Green's function) is attained. The four blow-up points form a regular tetrahedron. Moreover, the solutions we build have a group of symmetry $T_d$ which is isomorphic to the symmetric group $S_4$.  Other families of solutions can be similarly constructed with blow-up points at the vertices of equilateral triangles on a great circle or  other inscribed platonic solids (cubes, octahedrons, icosahedrons and dodecahedrons).  All of these solutions have the symmetries of the corresponding configuration, while they are non-axially symmetric.  

  \textbf{Key words}: Mean field equation, blow-up solutions, Green's function, group of symmetry.

\textbf{AMS subject classification}: 	35J61, 35J20.
\end{quote}


\section{Introduction}

\noindent In this paper, we consider a mean field equation on sphere $\mathbb{S}^2$, i.e.
\begin{equation}\label{eq:mfeS2}
\Delta_g u + \rho \left(\frac{e^u}{\int_{\mathbb{S}^2}e^u}-\frac{1}{4\pi}\right)=0,
\end{equation}
where $\Delta_g$ stands for the Laplace-Beltrami operator on $\mathbb{S}^2$ associated to the metric $g$ inherited from the ambient Euclidean metric. 

One may consider the more general form of the equation on a compact Riemannian surface $M$ without boundary:
\begin{equation}\label{eq:mfeg}
\Delta u + \rho\left(\frac{h e^u}{\int_M h e^u}-\frac{1}{\abs{M}}\right)=0,
\end{equation}
where $h\in C^{\infty}(M)$ is a positive potential function and $\abs{M}$ is the total area of the surface $M$.


Li in \cite{li1999harnack} proved that the solutions of (\ref{eq:mfeg}) are uniformly bounded if $\rho$ varies on any compact subset of $\mathbb{R}^{+}\setminus 8\pi\mathbb{N}$. Hence, the blow-up phenomena could only occur when $\rho\rightarrow 8\pi m$ where $m\in\mathbb{N}$. To study the general existence result of the mean field equation, he initiated a program to compute the topological degree $d_{\rho}$ of a map related to (\ref{eq:mfeg}). He also showed that $d_{\rho}=1$ as long as $\rho<8\pi$.  Due to the result of Li and the homotopy invariance of the degree, it is readily checked that $d_{\rho}$ is a constant on the interval $(8\pi(m-1),8\pi m)$ and is independent of $h$ and the metric of $M$. In particular, Lin in \cite{lin2000} calculated the topological degree of (\ref{eq:mfeS2}): $d_{\rho}=-1$ when $8\pi<\rho<16\pi$, and $d_{\rho}=0$ when $16\pi<\rho<24\pi$. Chen and Lin later in \cite{CPA:CPA3014} proved a priori bound for a sequence $\rho_n$ with $\rho=\rho_n$ in (\ref{eq:mfeg}). Using this a priori bound, they were able to calculate the degree $d_{\rho}$ in \cite{CPA:CPA10107}: $d_{\rho}=C_{m-\chi(M)}^m$ with $\rho\in(8\pi m, 8\pi (m+1))$ where $m\in\mathbb{N}$ and $\chi(M)$ denotes the Euler characteristic of the Riemann surface $M$. They evaluated the jump of degree across $8\pi m$ by calculating the degree contributed by blow-up solutions. In the case of (\ref{eq:mfeS2}), we have $\chi(\mathbb{S}^2)=2$ so that the degree $d_{\rho}=0$ for $\rho\in (8\pi m,8\pi(m+1))$ with $m\geq 2$. Therefore, it is not clear whether there exist blow-up solutions for (\ref{eq:mfeS2}) as $\rho\rightarrow 8\pi m$ with $m\geq 3$ solely by the result of Chen and Lin. However, Lin in \cite{lin2000} did establish the existence of the blow-up solutions to (\ref{eq:mfeS2}) when $\rho$ approaches $16\pi$ from above. Moreover, he showed that any sequence of axially symmetric nontrivial solutions must blow up at two points antipodal to each other.

Concerning the uniqueness of solution to (\ref{eq:mfeS2}), Lin in \cite{lin2000uniqueness} showed that the solution to (\ref{eq:mfeS2}) is unique for $0<\rho<8\pi$. In other words, (\ref{eq:mfeS2}) only admits constant solutions for $0<\rho<8\pi$. Most recently, the first author and Moradifam \cite{gui2016sphere} developed a new tool named ``sphere covering inequality" to extend the uniqueness result to a broader parameter range $\rho\in(0,8\pi)\cup(8\pi,16\pi]$ when the solutions of (\ref{eq:mfeS2}) that have center of mass at origin are considered. In \cite{DOLBEAULT20092870}, the multiplicity of axially symmetric nontrivial solutions of (\ref{eq:mfeS2}) is carefully investigated by Dolbeault, Esteban and Tarantello. However, even the existence of non-axially symmetric solutions remains open. Our paper gives an affirmative answer to it. Interested reader is referred to the survey \cite{Tarantello2010931} for more details of mean field equations on a closed surface.

In this paper, we will construct blow-up solutions to (\ref{eq:mfeS2}) with blow-up 
 points forming  regular configurations, i.e., the vertices of equilateral triangles on a great circle or   inscribed platonic solids (tetrahedrons, cubes, octahedrons, icosahedrons and dodecahedrons). Morevoer,  these solutions posses the corresponding  symmetries of the configuration. 

 To make the construction easier to understand,  we will consider
  $\rho\rightarrow 32\pi$ and focus on a configuration of tetrahedon. 
  The solutions we construct blow up at exactly four points $\xi_1$, $\xi_2$, $\xi_3$ and $\xi_4$. The four points $(\xi_1,\cdots,\xi_4)$ form a regular tetrahedron.
   Furthermore, $(\xi_1,\cdots,\xi_4)$ appears to be a critical point of the function
\begin{equation*}
F(p_1,p_2,p_3,p_4)=4\pi\sum\limits_{j<k}G(p_j,p_k)
\end{equation*}
where $(p_1,p_2,p_3,p_4)\in (\mathbb{S}^2)^4$ and $G$ denotes the Green's function of $-\Delta_g$ which will be defined explicitly in Section \ref{sec2}. In \cite{CPA:CPA10107}, Chen and Lin defined a more general function $f_h$ (see (1.18) in \citep{CPA:CPA10107}) which determines the locations of blow-up points. Our $F$ is a special case of $f_h$ when we take $h\equiv 1$ and $M=\mathbb{S}^2$. In \cite{CPA:CPA10107}, they choosed the potential $h$ wisely so that $f_h$ is a morse function to construct blow-up solutions of (\ref{eq:mfeg}). However, in our case $F$ is actually invariant under any orthogonal transformation due to the trait of the Green's function. Hence, their argument can not be applied here. To overcome the difficulty caused by the degeneracy of $F$, we assume further that the blow-up solutions posses the tetrahedral symmetry. More precisely, we will build the solutions among the class of functions that satisfy the following property:
\begin{equation}\label{symmetry}
u(y)=u(Ty), \textrm{ for all }T\in T_d \textrm{ and   any }y\in\mathbb{S}^2
\end{equation}
where $T_d$ is the group of symmetry of a regular tetrahedron. The group $T_d$ is isomorphic to the symmetric group $S_4$ since there is exactly one such symmetry for each permutation of the vertices of the tetrahedron. We now treat $T_d$ as a subgroup of order $24$ of the orthogonal group $O(3,\mathbb{R})$. Finally, by fixing the four blow-up points and assuming the tetrahedral symmetry, we are able to find blow-up solutions of (\ref{eq:mfeS2}) using the Lyapunov-type reduction.

We would like to point out that when $\rho \le 24 \pi$,  it is expected that all 
solutions must be axially symmetric (see, e.g., \cite{sun} for some partial 
results).  The construction in this paper (see Remark \ref{rmk1}) shows that $\rho=24 \pi$ is indeed the borderline value for the existence of non-axially symmetric solutions. 

The paper is organized as follows. In Section 2, we state the main result and some preliminaries. In Section 3, we construct our approximate solutions and get some useful estimates. Section 4 is devoted to the proof of the invertibility of the linearized operator. In section 5, we reduce the problem to a problem of finding the scale of bubbles. In Section 6, we solve the reduced problem, i.e. solve the scale $\lambda$ as long as the parameter $\rho$ is given.

\section{Main Result and Preliminaries}\label{sec2}
\noindent Before we state the main result, let us first introduce the Green's function $G(y,y')$ of $-\Delta_g$ of $\mathbb{S}^2$:
\begin{equation}\label{eq:green}
-\Delta_{g(y)} G(y,y')=\delta_{y'}(y) - \frac{1}{4\pi} \textrm{ in }\mathbb{S}^2,
\end{equation}
and
\begin{equation*}
\int_{\mathbb{S}^2}G(y,y')dH^2(y')=0 \textrm{ for all }y\in \mathbb{S}^2,
\end{equation*}
where $dH^2$ denotes the two-dimensional hausdorff measure.

In particular, we have the explicit formula of $G(y,y')$:
\begin{equation}\label{eq:regular}
G(y,y')=-\frac{1}{2\pi}\log{|y-y'|},
\end{equation}
where $|y-y'|$ denotes the euclidean distance of $y$ and $y'$ when $\mathbb{S}^2$ is embedded into $\mathbb{R}^3$ in the standard way.

Let $U_{\lambda,p}$ be the standard bubble at a point $p\in\mathbb{S}^2$, i.e. $U_{\lambda,p}$ solves (\ref{eq:mfeS2}) given that $\rho=8\pi$. We construct an isothermal coordinate system $x=(x_1,x_2)$ around $p$ by the stereographic projection $\Pi_p: \mathbb{S}^2\setminus -p\rightarrow \mathbb{R}^2$. Locally the Riemannian metric can be written in these coordinates:
\begin{equation*}
g=\frac{4}{(1+|x|^2)^2}\left(dx^2_1+dx^2_2\right).
\end{equation*}
The area element is given by
\begin{equation*}
d A =\frac{4}{(1+|x|^2)^2}dx_1dx_2.
\end{equation*}
One can also connect the Laplace-Beltrami operator with the usual laplacian operator on $\mathbb{R}^2$ through the following:
\begin{equation*}
\Delta_{\mathbb{R}^2}=\frac{4}{(1+|x|^2)^2}\Delta_g.
\end{equation*}
Furthermore, we have
\begin{equation}\label{eq:greeniso}
G(y,p)=-\frac{1}{4\pi}\ln{\left(\frac{4|x|^2}{(1+|x|^2)}\right)},
\end{equation}
and
\begin{equation}\label{eq:greeniso2}
G(y,y')=-\frac{1}{4\pi}\ln{\left(\frac{4|x-x'|^2}{(1+|x|^2)(1+|x'|^2)}\right)},
\end{equation}
where $x=\Pi_p(y)$ and $x'=\Pi_p(y')$.

Let $V_{\lambda}=\ln{\left(\frac{8\lambda^2}{(\lambda^2+|x|^2)^2}\right)}$ be the family of solutions of the Liouville equation on $\mathbb{R}^2$:
\begin{equation}\label{eq:lioue}
\Delta V_{\lambda} + e^{V_{\lambda}}=0. 
\end{equation} 

If we assume further that
\begin{equation}\label{eq:massb}
\int_{\mathbb{S}^2}e^{U_{\lambda,p}}=8\pi,
\end{equation}
then we can write
\begin{equation}\label{eq:stardb}
U_{\lambda,p}(y)=V_{\lambda}(x) + 2\ln{(1+|x|^2)}-\ln{4}.
\end{equation}

Now we state our main result.

\begin{theorem}\label{th:mainresult}
Let $\epsilon\in (0,\epsilon_0)$ for some $\epsilon_0$ small enough. Let $\rho=32\pi+\epsilon$. Assume that $\xi_1$, $\xi_2$, $\xi_3$ and $\xi_4$ form a regular tetrahedron. Then for each $\epsilon$, there exist a $\lambda>0$ and a solution $u_{\lambda}$ to the equation (\ref{eq:mfeS2}) such that 
\begin{equation*}
\epsilon=(384\pi^2+o(1))\lambda^2 \ln{\frac{1}{\lambda}},
\end{equation*}
\begin{equation*}
u_{\lambda}(\xi_j) \rightarrow \infty \textrm{ for }j=1,2,3,4, 
\end{equation*}
\begin{equation*}
u_{\lambda}(x)\rightarrow -\infty \textrm{ for all }x\in \mathbb{S}^2\setminus\{\xi_1,\xi_2,\xi_3,\xi_4\}\textrm{ as }\epsilon\rightarrow 0.
\end{equation*}
Moreover, $u_{\lambda}$ possesses tetrahedral symmetry, i.e.
\begin{equation*}
u_{\lambda}(y)=u_{\lambda}(Ty), \textrm{ for all }T\in T_d \textrm{ and   any }y\in\mathbb{S}^2,
\end{equation*}
and
\begin{equation*}
\frac{\rho}{\int_{\mathbb{S}^2}e^{u_{\lambda}}}e^{u_{\lambda}}\rightarrow 8\pi\sum\limits_{j=1}^4\delta_{\xi_j} \textrm{ in a sense of measure, as }\epsilon\rightarrow 0.
\end{equation*}
\end{theorem}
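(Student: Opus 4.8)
The plan is to construct the solution by a Lyapunov–Schmidt (finite-dimensional) reduction carried out entirely within the Banach space of tetrahedrally symmetric functions on $\mathbb{S}^2$. First I would build an approximate solution by superposing four bubbles,
\[
W_\lambda = \sum_{j=1}^4 PU_{\lambda,\xi_j},
\]
where $PU_{\lambda,\xi_j}$ denotes the bubble $U_{\lambda,\xi_j}$ of (\ref{eq:stardb}) suitably corrected so that, away from $\xi_j$, it matches $8\pi G(\cdot,\xi_j)$ up to the global constant forced by the mean-value normalization $\int_{\mathbb{S}^2} G\,dH^2=0$. Because all four points share the common scale $\lambda$ and the configuration is $T_d$-invariant, $W_\lambda$ belongs to the symmetric class (\ref{symmetry}); this is the decisive structural point, since it removes the translation/rotation modes from the relevant kernel and leaves only the single scaling direction. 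I would then measure the error $E=\Delta_g W_\lambda+\rho\bigl(e^{W_\lambda}/\int e^{W_\lambda}-1/(4\pi)\bigr)$ in a weighted $L^\infty$ (or $L^p$) norm. The regular tetrahedron being a critical point of $F$ (which follows from the $O(3)$-invariance of the Green's function together with the $T_d$-symmetry of the configuration), the dangerous first-order interaction terms cancel and $E$ is of the expected size $O(\lambda^{2}|\ln\lambda|)$.

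Writing $u=W_\lambda+\phi$ and linearizing, the equation becomes $L_\lambda\phi=-E-N(\phi)$, with $L_\lambda$ the linearization at $W_\lambda$ and $N$ the quadratic-and-higher remainder. The heart of the linear theory is to show that $L_\lambda$ is invertible, with norm of the inverse bounded by $O(|\ln\lambda|)$, on the subspace of symmetric functions $L^2$-orthogonal to the approximate kernel. The local kernel of the Liouville linearization at each bubble is spanned by the scaling mode $\lambda\partial_\lambda V_\lambda=2(|x|^2-\lambda^2)/(\lambda^2+|x|^2)$ and the two translation modes $\partial_{x_i}V_\lambda$; restricting to the symmetric class kills the translation modes, since they are odd with respect to the symmetries fixing $\xi_j$, so the only surviving obstruction is the single global scaling direction $Z_\lambda=\sum_j\lambda\partial_\lambda PU_{\lambda,\xi_j}$. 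With invertibility in hand, a contraction-mapping argument on the orthogonal complement solves the auxiliary equation and produces, for each small $\lambda$, a symmetric $\phi=\phi_\lambda$ with $\|\phi_\lambda\|=O(\lambda^{2}|\ln\lambda|)$ depending smoothly on $\lambda$, whose contribution to the reduced problem is of lower order.

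It remains to solve the one-dimensional reduced equation, the projection of the full equation onto $Z_\lambda$, which may be written $c(\lambda)=\langle E+N(\phi_\lambda),Z_\lambda\rangle=0$. Expanding this pairing, the leading contribution splits into a self-interaction part coming from each individual bubble and a mutual-interaction part governed by $F(\xi_1,\ldots,\xi_4)=4\pi\sum_{j<k}G(\xi_j,\xi_k)$; at a regular tetrahedron all six pairwise distances are equal, so the interaction term carries the factor $\binom{4}{2}(8\pi)^2=384\pi^2$. Balancing this against the term produced by $\rho=32\pi+\epsilon$ yields, to leading order, the relation
\[
\epsilon=(384\pi^2+o(1))\,\lambda^2\ln\tfrac{1}{\lambda}.
\]
Since the right-hand side is strictly monotone in $\lambda$ for $\lambda$ small, the implicit function theorem (or a direct monotonicity/degree argument) produces a unique $\lambda=\lambda(\epsilon)\to0$ solving it, and the corresponding $u_{\lambda(\epsilon)}=W_{\lambda(\epsilon)}+\phi_{\lambda(\epsilon)}$ is the desired solution; the stated blow-up, the concentration of mass onto $8\pi\sum_j\delta_{\xi_j}$, and the tetrahedral symmetry all follow from the construction.

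The main obstacle I anticipate is twofold. The less routine analytic point is the uniform invertibility of $L_\lambda$: one must verify that, after quotienting by the symmetry, no spurious small eigenvalues survive — equivalently that the translation modes are genuinely excluded and the four bubbles do not resonate — which is precisely where the degeneracy of $F$ noted in the introduction would otherwise defeat a non-symmetric approach. The more delicate computational point is pinning down the constant $384\pi^2$ (and the vanishing of the next-order terms), which requires a careful expansion of $\langle E,Z_\lambda\rangle$ including the exact interplay between the Liouville self-energy and the Green's-function interaction at the tetrahedral configuration.
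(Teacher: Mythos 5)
Your proposal follows the same overall scheme as the paper: a superposition of projected bubbles at a common scale $\lambda$ (the paper's $w_{\lambda}=\sum_k w_{\lambda,k}+\overline{w}_{\lambda}$, with $w_{\lambda,k}$ defined in (\ref{eq:bubble1}) so as to match $8\pi G(\cdot,\xi_k)$ away from $\xi_k$, as in Lemma \ref{lm:outer}); error estimates exploiting that the regular tetrahedron is a critical point of $F$ (Remark \ref{rmk:3}, Lemma \ref{lm:error}); a linear theory in $T_d$-symmetric weighted $L^{\infty}$ spaces where symmetry automatically annihilates the translation modes so that only the dilation modes $\varphi_{0,j}$ must be projected out (Proposition \ref{prop:solvability}, Corollary \ref{cor:linear}); and a contraction mapping producing $\phi_{\lambda}$ (Lemma \ref{lm:contraction}). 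Where you genuinely diverge is the final, one-dimensional step: you propose to expand the projection $\langle E+N(\phi_{\lambda}),Z_{\lambda}\rangle$ directly and solve $c(\lambda)=0$ by monotonicity or the implicit function theorem, whereas the paper proceeds variationally: it expands the energy $J_{\rho}(w_{\lambda}+\phi)$ in $\lambda$ (Lemmas \ref{lm:energyest} and \ref{lm:energerr}), produces a local maximum $\lambda_{\ast}$ by degree theory (Lemma \ref{lm:critical}), and then shows that criticality in $\lambda$ forces the Lagrange multipliers to vanish (Lemma \ref{lm:conc}).

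The variational route handles a subtlety that your one-multiplier formulation hides. Because equation (\ref{eq:mfeS2}) is invariant under adding constants, the reduction carries not one but two kinds of multipliers: the four dilation coefficients $c_j$ (equal to a common $c$ by symmetry, Lemma \ref{lm:consts}) and an extra constant $c_0$ tied to the normalization $\phi\perp e^{w_{\lambda}}$. Deducing that all of them vanish from the single scalar equation $\partial_{\lambda}J_{\rho}(w_{\lambda}+\phi)\vert_{\lambda=\lambda_{\ast}}=0$ requires the nondegeneracy $\mathcal{B}-2\mathcal{A}(1-\tfrac{\mathcal{C}}{\pi})\neq 0$ verified in Lemma \ref{lm:conc}; an analogous check is needed, but absent, in your projection version. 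Two further caveats: monotonicity of the leading term $\lambda^2\ln\frac{1}{\lambda}$ gives existence of $\lambda(\epsilon)$ by an intermediate-value or degree argument, not uniqueness, since the reduced function is only controlled up to $o(1)$ errors, so your claim of a unique $\lambda(\epsilon)$ overreaches; and your counting heuristic $\binom{4}{2}(8\pi)^2=384\pi^2$ should not be taken as a verification of the constant, since the relation actually arises from balancing $2\epsilon\ln\lambda$ against the $\lambda^2\ln\lambda$ term in the energy, and the paper's own Lemma \ref{lm:critical} produces $\epsilon=(384\pi+o(1))\lambda_{\ast}^2\ln\frac{1}{\lambda_{\ast}}$, which is in apparent tension with the constant $384\pi^2$ stated in Theorem \ref{th:mainresult}; pinning this constant down requires the full expansion, not the pairwise-interaction count alone.
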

The proof of Theorem \ref{th:mainresult} relies on a Lyapunov-type reduction. We first construct the approximation solution which behaves like the standard bubble $U_{\lambda,\xi_j}$ near the blow-up point $\xi_j$ and behaves like the Green's function away from these four points. Then we carry out a finite dimensional variational reduction for which the main ingredient is an analysis, of independent interest, of bounded invertibility up to the dilations of the linearized operator in suitable $L^{\infty}-$weighted spaces with certain symmetries. The setting successfully reduce the original problem into a problem of finding the appropriate scale $\lambda$ of the bubbles.

\begin{remark}\label{rmk1}
The same type of construction also works for the case where the number of blow-up points are $m=3$, $m=6$, $m=8$, $m=12$ and $m=20$ respectively, with $\rho$ tending  $8m\pi$. To be more precise, it is possible to build blow-up solutions that concentrate at three points which make a equilateral triangle on the great circle, as $\rho \rightarrow 24 \pi$. We can also show the existence of blow-up solutions that blow up at exactly six points, as $\rho \rightarrow 48 \pi$. In this case, the six points form a regular octahedron. Note here, $(\xi_1,\xi_2,\xi_3)$ is a critical point of the function $F(p_1,p_2,p_3)=\sum_{1 \leq j<k\leq 3} G(p_j,p_k)$ if $(\xi_1,\xi_2,\xi_3)$ forms a equilateral triangle on the great circle; while $(\xi_1,\cdots,\xi_6)$ is a critical point of the function $F(p_1,\cdots,p_6)=\sum_{1 \leq j<k \leq 6}G(p_j,p_k)$ if $(\xi_1,\cdots,\xi_6)$ forms a regular octahedron. Similarly, the ``cubic" blow-up solutions (the solutions that blow up at eight points which form a cube, as $\rho \rightarrow 64 \pi$) can be found when $m=8$. It is no surprise that the ``cube" configuration is indeed a critical configuration of $F$ when $m=8$.  The ``icosahedral" blow-up solutions (the solutions that blow up at twelve points which form a regular icosahedron, as $\rho \rightarrow 96 \pi$) exist when $m=12$. Furthermore, the ``dodecahedral" solutions (the solutions that blow up at twenty points which form a regular dodecahedron as $\rho \rightarrow 160 \pi$) can be built in the same fashion when $m=20$.  The construction of these solutions could follow line by line the proof of Theorem \ref{th:mainresult} with suitable change of numbers, so we omit the details.  We also would like to point out that these solutions posses certain kinds of symmetries but they are not axially symmetric. 

\end{remark}

\begin{remark}\label{rmk2}
It is proved that the platonic solid configurations when $m=4, 6, 12$ minimize the corresponding $F$s (see \cite{kolushov1997extremal} for $m=6$ and see \cite{cohn2007universally} for $m=12$). For a rigorous proof of minimality of the tetrahedral configuration, one can refer to \cite{dragnev2002discrete} in which the authors also showed the optimality of a five point configuration. However, the ``cube" configuration is not a minimizing configuration of $F$ when $m=8$. The optimal configuration in this case is called a ``twisted cuboid" (see \cite{trinh2016dynamics}), consisting of two parallel rings containing a square, with the square shifted by $45^{\circ}$ between each ring. The minimality of the ``dodecahedral" configuration is unknown for the case $m=20$.
\end{remark}

\section{Approximate Solution}
\noindent In this section, we will construct the approximate solution of the equation (\ref{eq:mfeS2}) and obtain some estimates of this approximate solution. Let $R_0>0$ be a small fixed number. Let $\eta$ be a standard cut-off function such that
\begin{equation*}
\eta(s)=1 \textrm{ for }s\leq 1; \eta(s)=0 \textrm{ for }s\geq 2; 0<\eta(s)<1 \textrm{ for } 1<s<2.
\end{equation*}
We further assume that 
\begin{equation*}
\abs{\eta'(s)}\leq 2.
\end{equation*}
Let 
\begin{equation}\label{eq:cutoff1}
\eta_{t,\xi}(y)=\eta\left(\frac{|\Pi_{\xi}(y)|}{t}\right),
\end{equation} 
for any $\xi\in\mathbb{S}^2$ and $t>0$.

Given $\epsilon\in(0,\epsilon_0)$, we choose $\lambda>0$ such that
\begin{equation}\label{eq:rangepar}
192\pi\lambda^2\ln{\frac{1}{\lambda}}<\epsilon<768\pi\lambda^2\ln{\frac{1}{\lambda}}.
\end{equation}

In other words, the above inequality can also be written as
\begin{equation}\label{eq:parr2}
\lambda_1(\epsilon)<\lambda<\lambda_2(\epsilon),
\end{equation}
where one can solve $\lambda_1(\epsilon)$ and $\lambda_2(\epsilon)$ from (\ref{eq:rangepar}).

Let $w_{\lambda,k}$ be the solution of the following equation:
\begin{equation}\label{eq:bubble1}
-\Delta_g w_{\lambda,k}= e^{U_{\lambda,\xi_k}}\eta_{R_0,\xi_k}-m_0,
\end{equation}
\begin{equation*}
\int_{\mathbb{S}^2} w_{\lambda,k}=0,
\end{equation*}
where
\begin{equation}\label{eq:mass1}
4\pi m_0=\int_{\mathbb{S}^2}e^{U_{\lambda,\xi_k}}\eta_{R_0,\xi_k},
\end{equation}
for $k=1,2,3,4$.

By simple calculations, one can obtain the so-called ``mass" of $w_{\lambda,k}$, i.e.
\begin{equation}\label{eq:mass}
m_0=2+O(\lambda^2).
\end{equation}

We introduce $\tilde{w}_{\lambda}$ to be the sum of $w_{\lambda,k}$, i.e.
\begin{equation}\label{eq:avg0}
\tilde{w}_{\lambda}=\sum\limits_{k=1}^4 w_{\lambda,k},
\end{equation}
and a constant related to $\lambda$
\begin{equation}\label{eq:avg}
\overline{w}_{\lambda}=2\ln{\lambda}+5\ln{2}-4\pi \sum\limits_{j< k}G(\xi_j,\xi_k).
\end{equation}

Then we are ready to provide an ansatz for solutions of the equation (\ref{eq:mfeS2}):
\begin{equation}\label{eq:approx}
w_{\lambda}=\tilde{w}_{\lambda}+\overline{w}_{\lambda}.
\end{equation}

Let us then calculate the values of $w_{\lambda,k}$ at the blow-up points $\xi_k$:
\begin{equation}\label{eq:center}
w_{\lambda,k}(\xi_k)=\int_{\mathbb{S}^2} G(\xi_k,y)\left[e^{U_{\lambda,\xi_k}}\eta_{R_0,\xi_k}(y)-m_0\right]dH^2(y)
\end{equation}
\begin{equation*}
=\int_{B(0,R_0)} -\frac{1}{4\pi}\ln{\left(\frac{4|x|^2}{(1+|x|^2)}\right)}\frac{8\lambda^2}{(\lambda^2+|x|^2)^2}dx+O(\lambda^2)
\end{equation*}
\begin{equation*}
=\int_{B(0,\frac{R_0}{\lambda})}\left[-\frac{1}{2\pi}\ln{2}-\frac{1}{2\pi}\ln{\lambda}-\frac{1}{2\pi}\ln{|z|}+\frac{1}{4\pi}\ln{(1+\lambda^2|z|^2)}\right]\frac{8}{(1+|z|^2)^2}dz + O(\lambda^2)
\end{equation*}
\begin{equation*}
=-4\ln{2}-4\ln{\lambda}+\frac{4\lambda^2\ln{\lambda}}{\lambda^2+R^2_0}-\frac{4}{\pi}\int_{B(0,\frac{R_0}{\lambda})} \frac{\ln{|z|}}{(1+|z|^2)^2}dz 
\end{equation*}
\begin{equation*}
+ \frac{2}{\pi} \int_{B(0,\frac{R_0}{\lambda})} \frac{\ln{(1+\lambda^2|z|^2)}}{(1+|z|^2)^2}dz+ O(\lambda^2)
\end{equation*}
\begin{equation*}
=-4\ln{\lambda} -4\ln{2} -4\lambda^2 \ln{\lambda}    + O(\lambda^2),
\end{equation*}
where $x=\Pi_{\xi_k}(y)=\lambda z$.

For $\abs{z}\leq\frac{R_0}{\lambda}$, we have
\begin{equation}\label{eq:inner}
w_{\lambda,k}(y)-w_{\lambda,k}(\xi_k)=\int_{\mathbb{S}^2}\left[G(y,y')-G(\xi_k,y')\right]e^{U_{\lambda,\xi_k}}\eta_{R_0,\xi_k}dH^2(y')
\end{equation}
\begin{equation*}
=\int_{B(0,\frac{R_0}{\lambda})}-\frac{1}{2\pi}\left[\ln{|z-z'|}-\ln{|z'|}\right]\frac{8}{(1+|z'|^2)^2}dz'
\end{equation*}
\begin{equation*}
+\frac{2}{\pi}\int_{B(0,\frac{R_0}{\lambda})}\frac{\ln{(1+\lambda^2|z|^2)}}{(1+|z'|^2)^2}dz'+ O(\lambda^2) + O(\lambda^3|z|)
\end{equation*}
\begin{equation*}
=\ln{\left(\frac{1}{(1+|z|^2)^2}\right)}+2\ln{(1+\lambda^2|z|^2)}+\lambda^2 f_k(x)+O(\lambda^3|z|),
\end{equation*}
where $x'=\Pi_{\xi_k}(y')=\lambda z'$ and $f_k$ is a smooth function of $x$ which is uniformly bounded with respect to $\lambda$ .

For $\abs{z}\geq\frac{2 R_0}{\lambda}$, i.e. $|x|\geq 2 R_0$, we have
\begin{equation}\label{eq:outer}
w_{\lambda,k}(y)=\int_{\mathbb{S}^2} G(y,y')e^{U_{\lambda,\xi_k}}\eta_{R_0,\xi_k}dH^2(y')
\end{equation}
\begin{equation*}
=\int_{B(0,\frac{R_0}{\lambda})} G(y,\Pi^{-1}_{\xi_k}(\lambda z'))\frac{8}{(1+|z'|)^2}dz'+ O(\lambda^2)
\end{equation*}
\begin{equation*}
=\int_{B(0,\frac{R_0}{\lambda})} \left[G(y,\xi_k)+ \lambda\nabla_{x'} G(y,\xi_k)\cdot z' +\frac{\lambda^2 (z'\nabla_{x'}^2 G(y,\xi_k){z'}^{T})}{2}\right]\frac{8}{(1+|z'|^2)^2}dz'
\end{equation*}
\begin{equation*}
+O(\lambda^2)
\end{equation*}
\begin{equation*}
=8\pi G(y,\xi_k)+4\pi\lambda^2 \int_0^{\frac{R_0}{\lambda}}\textrm{Tr}(\nabla_{x'}^2 G(y,\xi_k))\frac{r^3}{(1+r^2)^2}dr+ O(\lambda^2) 
\end{equation*}
\begin{equation*}
=8\pi G(y,\xi_k)-4\lambda^2 \ln{\lambda}+\lambda^2 \tilde{f}_k(y),
\end{equation*}
where $\tilde{f}_k$ is a smooth function of $y$ which is uniformly bounded with respect to $\lambda$.

In particular, we can get
\begin{equation}\label{eq:peak}
w_{\lambda,k}(\xi_j)=8\pi G(\xi_j,\xi_k)-4\lambda^2 \ln{\lambda}+O(\lambda^2),
\end{equation}
for $j\neq k$ and $j,k=1,2,3,4$. 

Combining (\ref{eq:center}) and (\ref{eq:inner}), we have for $|z|<\frac{R_0}{\lambda}$,
\begin{equation}\label{eq:inner1}
w_{\lambda,k}(\Pi^{-1}_{\xi_k}(\lambda z))=-4\ln{\lambda}-4\ln{2}-4\lambda^2 \ln{\lambda}+\ln{\left(\frac{1}{(1+|z|^2)^2}\right)} + 2\ln{(1+\lambda^2|z|^2)}
\end{equation}
\begin{equation*}
+\lambda^2 f_k(x)+O(\lambda^3|z|).
\end{equation*}
Here we abuse the notation $f_k$ a little bit to denote a smooth function of $x$ which is uniformly bounded with respect to $\lambda$.

To estimate the values of $w_{\lambda,k}$ in the annulus $\{R_0<|x|<2R_0\}$, we compare $w_{\lambda,k}$ with a function $W_{\lambda,k}$ constructed by gluing the inner approximation and the outer approximation together using an ``intermediate" layer $\eta_{\lambda^{\alpha},\xi_k}$ for some $\alpha\in(0,1)$.

Let 
\begin{equation}\label{eq:glueil}
W_{\lambda,k}=w_i\eta_{\lambda^{\alpha},\xi_k}+w_o(1-\eta_{\lambda^{\alpha},\xi_k}),
\end{equation}
where
\begin{equation}\label{eq:gluein}
w_i(\Pi^{-1}_{\xi_k}(\lambda z))=-4\ln{\lambda}-4\ln{2}-4\lambda^2 \ln{\lambda}+\ln{\left(\frac{1}{(1+|z|^2)^2}\right)} + 2\ln{(1+\lambda^2|z|^2)},
\end{equation}
and
\begin{equation}\label{eq:glueou}
w_o(y)=8\pi G(y,\xi_k)-4\lambda^2 \ln{\lambda}.
\end{equation}

We have the following lemma:
\begin{lemma}\label{lm:gluelm}
For some $\alpha\in(0,1)$, there exist a constant $C>0$ independent of $\lambda$ and a $\alpha'\in(0,1)$ such that
\begin{equation*}
\norm{w_{\lambda,k}-W_{\lambda,k}}_{\infty}\leq C \lambda^{\alpha'}.
\end{equation*}
\end{lemma}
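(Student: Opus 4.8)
The plan is to prove the estimate by a region decomposition of the $x=\Pi_{\xi_k}(y)$ plane, using the pointwise expansions (\ref{eq:inner1}) and (\ref{eq:outer}) wherever they are available and a maximum principle on the one annulus they fail to cover. The single algebraic fact that drives everything is the discrepancy between the inner and outer profiles: writing $x=\lambda z$ and inserting the explicit Green's function (\ref{eq:greeniso}) into (\ref{eq:glueou}), a direct computation collapses all the singular and $\lambda$-dependent terms and leaves
\begin{equation*}
w_i - w_o = -2\ln\!\left(1+\frac{1}{|z|^2}\right),
\end{equation*}
so that $\abs{w_i-w_o}=O(1/|z|^2)$ for $|z|$ large. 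This quadratic decay is exactly what makes the two approximations agree to high order in the overlap where the cut-off $\eta_{\lambda^{\alpha},\xi_k}$ interpolates, and it is what allows the glued function to be a good global approximation.

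I would then split according to the size of $|x|$. On the core $|x|\le\lambda^{\alpha}$ one has $W_{\lambda,k}=w_i$, and (\ref{eq:inner1}) gives $w_{\lambda,k}-W_{\lambda,k}=\lambda^2 f_k+O(\lambda^3|z|)=O(\lambda^2)$ since $|z|\le\lambda^{\alpha-1}$. On the gluing/intermediate zone $\lambda^{\alpha}\le|x|\le R_0$ the expansion (\ref{eq:inner1}) still applies and
\begin{equation*}
w_{\lambda,k}-W_{\lambda,k}=(w_i-w_o)(1-\eta_{\lambda^{\alpha},\xi_k})+\lambda^2 f_k+O(\lambda^3|z|);
\end{equation*}
here $|z|\ge\lambda^{\alpha-1}$, so the identity above bounds the first term by $C/|z|^2\le C\lambda^{2(1-\alpha)}$ while $\lambda^3|z|\le\lambda^2R_0$, giving $O(\lambda^{2(1-\alpha)})$. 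On the exterior $|x|\ge 2R_0$ we have $W_{\lambda,k}=w_o$ and (\ref{eq:outer}) directly yields $w_{\lambda,k}-W_{\lambda,k}=\lambda^2\tilde f_k=O(\lambda^2)$. The remaining transition annulus $R_0\le|x|\le 2R_0$, on which $\eta_{\lambda^{\alpha},\xi_k}\equiv 0$ so that $W_{\lambda,k}=w_o$, is the one place where neither pointwise formula is valid; there I would estimate $\phi:=w_{\lambda,k}-w_o$ through its equation. Since $e^{U_{\lambda,\xi_k}}=O(\lambda^2)$ away from $\xi_k$, $m_0=2+O(\lambda^2)$ by (\ref{eq:mass}), and $-\Delta_g w_o=-2$ off the pole, one gets $-\Delta_g\phi=O(\lambda^2)$ on the annulus, with boundary data $\phi=O(\lambda^2)$ at $|x|=R_0$ (from (\ref{eq:inner1}) together with the identity, which at $|z|=R_0/\lambda$ is only $O(\lambda^2)$) and at $|x|=2R_0$ (from (\ref{eq:outer})); the maximum principle then forces $\abs{\phi}=O(\lambda^2)$ throughout.

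Taking the maximum over the four zones, the worst contribution is the $O(\lambda^{2(1-\alpha)})$ produced near $|x|\sim\lambda^{\alpha}$ by the gluing zone, so choosing any $\alpha\in(1/2,1)$ and setting $\alpha'=2(1-\alpha)\in(0,1)$ closes the estimate. The \emph{main obstacle} is the transition annulus, where no closed-form expansion of $w_{\lambda,k}$ exists and one must argue via the PDE satisfied by $\phi$ and the maximum principle rather than by substitution; the only other point requiring care is tracking the $1/|z|^2$ decay in the intermediate zone precisely enough that the single free parameter $\alpha$ can be tuned to yield an admissible exponent $\alpha'<1$.
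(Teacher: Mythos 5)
Your proof is correct, and it takes a genuinely different route from the paper's. The paper argues globally: it computes $-\Delta_g(w_{\lambda,k}-W_{\lambda,k})=\tilde r_{k,\lambda}$, shows $\norm{\tilde r_{k,\lambda}}_{L^2(\mathbb{S}^2)}\leq C\lambda^{2-3\alpha}$, solves the auxiliary mean-zero problem (\ref{eq:normglue}), and invokes elliptic regularity plus Poincar\'e to get an $H^2$ (hence $L^\infty$) bound, pinning down the leftover additive constant by evaluating at $\xi_k$ via (\ref{eq:center2}) and (\ref{eq:center3}); the exponent $\alpha'$ comes from balancing $\lambda^{2-3\alpha}$, $\lambda^{2-2\alpha}$ and $\lambda^{2\alpha}\ln\frac{1}{\lambda}$. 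You instead work pointwise: the exact identity $w_i-w_o=-2\ln\left(1+\frac{1}{|z|^2}\right)$ (which I checked against (\ref{eq:gluein}), (\ref{eq:glueou}) and (\ref{eq:greeniso}); it is correct), the expansions (\ref{eq:inner1}) and (\ref{eq:outer}) in their regions of validity, and a barrier/maximum-principle argument on the fixed annulus $R_0\leq|x|\leq 2R_0$, which is indeed the only place neither expansion covers and where your PDE bookkeeping ($e^{U_{\lambda,\xi_k}}=O(\lambda^2)$ there, $m_0=2+O(\lambda^2)$, $-\Delta_g w_o=-2$ away from $\xi_k$) is right. Each approach buys something: yours is more elementary (no $L^2$ theory or Sobolev embedding), quantitatively stronger (it gives $O(\lambda^{2(1-\alpha)})$, essentially $O(\lambda^2)$ for small $\alpha$, versus roughly $O(\lambda^{1/2})$ up to logarithms from the paper's balancing), and makes transparent that the whole construction hinges on the quadratic decay of the profile mismatch; the paper's argument is softer, needing only size estimates on the commutator terms $r_{k,\lambda}$ rather than an exact cancellation, so it transfers more readily to settings where the Green's function has no closed form. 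It is worth noting that your identity is the hidden reason the paper's $r_{k,\lambda}$ is small at all, so the two proofs exploit the same cancellation, yours pointwise and theirs in $L^2$. One cosmetic remark: since the lemma only asks for \emph{some} $\alpha\in(0,1)$ and \emph{some} $\alpha'\in(0,1)$, your restriction to $\alpha\in(1/2,1)$ is unnecessary --- for $\alpha\leq 1/2$ the bound $C\lambda^{2(1-\alpha)}\leq C\lambda^{\alpha'}$ holds for any $\alpha'\in(0,1)$ anyway --- but this costs nothing.
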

\begin{proof}
It is easy to verify that
\begin{equation}\label{eq:center2}
(w_{\lambda,k}-W_{\lambda,k})(\xi_k)=O(\lambda^2).
\end{equation}
From (\ref{eq:glueil})-(\ref{eq:glueou}), we have
\begin{equation}\label{eq:lapglue}
-\Delta_g W_{\lambda,k}(y)=-\frac{(1+|x|^2)^2}{4}\Delta_{\mathbb{R}^2}W_{\lambda,k}(\Pi^{-1}_{\xi_k}(x))
\end{equation}
\begin{equation*}
=e^{U_{\lambda,\xi_k}}\eta_{\lambda^{\alpha},\xi_k}-2-\frac{(1+|x|^2)^2}{4}[2\nabla_x (w_i-w_o)\cdot \nabla_x \eta_{\lambda^{\alpha},\xi_k}+(w_i-w_o)\Delta_{\mathbb{R}^2}\eta_{\lambda^{\alpha},\xi_k}].
\end{equation*}
Let
\begin{equation*}
r_{k,\lambda}(x)=\frac{(1+|x|^2)^2}{4}[2\nabla_x (w_i-w_o)\cdot \nabla_x \eta_{\lambda^{\alpha},\xi_k}+(w_i-w_o)\Delta_{\mathbb{R}^2}\eta_{\lambda^{\alpha},\xi_k}].
\end{equation*}

One can show that
\begin{equation*}
\norm{r_{k,\lambda}}_{\infty}\leq C \lambda^{2-2\alpha}.
\end{equation*}

By (\ref{eq:bubble1}) and (\ref{eq:lapglue}), we have
\begin{eqnarray*}
-\Delta_g(w_{\lambda,k}-W_{\lambda,k})&=&e^{U_{\lambda,\xi_k}}(\eta_{R_0,\xi_k}-\eta_{\lambda^{\alpha},\xi_k})+ r_{k,\lambda}(x) + O(\lambda^2)\\
&=&\tilde{r}_{k,\lambda}(x).
\end{eqnarray*}

Let $\tilde{W}_{\lambda,k}$ be the unique solution of the problem
\begin{equation}\label{eq:normglue}
-\Delta_g \tilde{W}_{\lambda,k}=\tilde{r}_{k,\lambda}(x),
\end{equation}
\begin{equation*}
\int_{\mathbb{S}^2}\tilde{W}_{\lambda,k}=0.
\end{equation*}

By the elliptic regularity estimate and the Poincare's inequality, we have
\begin{equation}\label{eq:ellest}
\norm{\tilde{W}_{\lambda,k}}_{H^2(\mathbb{S}^2)}\leq C\norm{\tilde{r}_{k,\lambda}}_{L^2(\mathbb{S}^2)}.
\end{equation}
It is readily checked that
\begin{equation}\label{eq:estl2}
\norm{\tilde{r}_{k,\lambda}}_{L^2(\mathbb{S}^2)}\leq C\lambda^{2-3\alpha}.
\end{equation}
We can repeat a similar calculation as we did in (\ref{eq:center}) to derive
\begin{equation}\label{eq:center3}
|\tilde{W}_{\lambda,k}(\xi_k)|\leq C(\lambda^{2-2\alpha}+\lambda^{2\alpha})\ln{\frac{1}{\lambda}}.
\end{equation} 

Choose $\alpha$ appropriately and combine (\ref{eq:center2}), (\ref{eq:ellest}), (\ref{eq:estl2}) and (\ref{eq:center3}), we have
\begin{equation*}
\norm{w_{\lambda,k}-W_{\lambda,k}}_{\infty}\leq C\lambda^{\alpha'},
\end{equation*}
for some $\alpha'\in(0,1)$.
\end{proof}

 We also obtain the following lemma concerning the values of $w_{\lambda}$ near the blow-up points $\xi_k$:
\begin{lemma}\label{lm:inner}
We have the inner approximation of $w_{\lambda}$ inside the ball $z\in B(0,\frac{R_0}{\lambda})$,
\begin{equation}\label{eq:innersum}
w_{\lambda}(\Pi^{-1}_{\xi_k}(\lambda z))=\ln{\left(\frac{8}{\lambda^2(1+|z|^2)^2}\right)} -16\lambda^2\ln{\lambda}+2\ln{(1+\lambda^2|z|^2)}-\ln{4}
\end{equation}
\begin{equation*}
+4\pi\lambda^2 \sum\limits_{j,j\neq k} z\nabla^2_x G(\Pi^{-1}_{\xi_k}(x),\xi_j)\vert_{x=0} z^T+\lambda^2 f(x)+ O(\lambda^3 |z|^3)+O(\lambda^3|z|),
\end{equation*}
for $k=1,2,3,4$.
Here $f(x)$ is a smooth function which is uniformly bounded with respect to $\lambda$.
\end{lemma}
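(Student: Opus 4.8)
I would prove Lemma \ref{lm:inner} by computing $w_\lambda=\sum_{j=1}^4 w_{\lambda,j}+\overline w_\lambda$ near $\xi_k$ after splitting the sum into the self-interaction term $j=k$ and the three remote terms $j\ne k$. For the self-term I substitute the inner expansion (\ref{eq:inner1}) verbatim. For each remote term I invoke the outer expansion (\ref{eq:outer}) with the roles of the indices swapped, $w_{\lambda,j}(y)=8\pi G(y,\xi_j)-4\lambda^2\ln\lambda+\lambda^2\tilde f_j(y)$; this is the applicable regime because for $y=\Pi^{-1}_{\xi_k}(\lambda z)$ with $|z|\le R_0/\lambda$ the point $y$ lies within distance $R_0$ of $\xi_k$, hence outside the $2R_0$-neighborhood of the distinct vertex $\xi_j$ once $R_0$ is small compared with the edge length. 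I then Taylor expand $G(y,\xi_j)$ in the isothermal coordinate $x=\lambda z$ about $x=0$ to second order, producing a constant $G(\xi_k,\xi_j)$, a gradient term of order $\lambda|z|$, a Hessian term $\tfrac{\lambda^2}{2}\,z\nabla_x^2 G(\Pi^{-1}_{\xi_k}(x),\xi_j)|_{x=0}\,z^T$, and a remainder $O(\lambda^3|z|^3)$.

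The decisive step is that the first-order terms cancel after summing over $j\ne k$. In the chart at $\xi_k$ the vector $\nabla_x G(\Pi^{-1}_{\xi_k}(x),\xi_j)|_{x=0}$ is a conformal multiple of the tangential gradient at $\xi_k$ of $y\mapsto G(y,\xi_j)$, and the regular tetrahedron admits a three-fold rotation about the axis through $\xi_k$ and the origin that cyclically permutes the remaining three vertices. Consequently $\sum_{j\ne k}\nabla_x G(\Pi^{-1}_{\xi_k}(x),\xi_j)|_{x=0}$ is fixed by a $120^\circ$ rotation and must therefore vanish; equivalently, this is precisely the criticality of $F$ at the tetrahedral configuration. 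I expect this to be the main point to get right, since a surviving linear term would contribute an $O(\lambda|z|)$ error that is not negligible on $B(0,R_0/\lambda)$. The remaining Hessian contributions, each carrying the prefactor $8\pi\cdot\tfrac12$, combine into the announced term $4\pi\lambda^2\sum_{j\ne k} z\nabla_x^2 G(\Pi^{-1}_{\xi_k}(x),\xi_j)|_{x=0} z^T$.

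It then remains to collect the $z$-independent pieces and check the bookkeeping. Adding $-4\ln\lambda$ from the self-term and $+2\ln\lambda$ from $\overline w_\lambda$ gives $-2\ln\lambda$, while $-4\ln2$ and $+5\ln2$ give $\ln2$; together with the $z$-dependent $\ln\bigl(1/(1+|z|^2)^2\bigr)$ these reassemble into $\ln\bigl(8/(\lambda^2(1+|z|^2)^2)\bigr)-\ln4$. The Green's-function constants cancel exactly: the remote terms supply $8\pi\sum_{j\ne k}G(\xi_k,\xi_j)$ and $\overline w_\lambda$ supplies $-4\pi\sum_{j<k}G(\xi_j,\xi_k)$, and since all six pairwise distances of a regular tetrahedron coincide these equal $24\pi G(\xi_1,\xi_2)$ and $-24\pi G(\xi_1,\xi_2)$, respectively. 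The $\lambda^2\ln\lambda$ terms accumulate as $-4$ from the self-term plus $3\times(-4)$ from the three remote $-4\lambda^2\ln\lambda$ contributions, yielding the coefficient $-16$. Finally the bounded smooth remainders $\lambda^2 f_k(x)$ and $\lambda^2\sum_{j\ne k}\tilde f_j(y)$ are absorbed into a single $\lambda^2 f(x)$, and the error terms $O(\lambda^3|z|)$ and $O(\lambda^3|z|^3)$ combine to produce exactly (\ref{eq:innersum}).
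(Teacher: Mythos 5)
Your proposal is correct and follows essentially the same route as the paper: split $w_{\lambda}=\sum_j w_{\lambda,j}+\overline{w}_{\lambda}$ into the self term expanded via (\ref{eq:inner1}) and the remote terms expanded via (\ref{eq:outer}), Taylor expand $G(\cdot,\xi_j)$ to second order in the chart at $\xi_k$, kill the linear terms by the criticality of the tetrahedral configuration (the paper's Remark \ref{rmk:3}, which you additionally justify by the three-fold rotational symmetry), and collect constants, with all bookkeeping (the $-16\lambda^2\ln\lambda$ coefficient, the cancellation $8\pi\sum_{j\neq k}G(\xi_k,\xi_j)=4\pi\sum_{j<k}G(\xi_j,\xi_k)=24\pi G(\xi_1,\xi_2)$, and the error terms) checking out exactly as stated.
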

\begin{remark}\label{rmk:3}
Note that here we use the fact that
\begin{equation*}
\sum\limits_{j,j\neq k}\nabla_x G(\Pi^{-1}_{\xi_k}(x),\xi_j)\vert_{x=0}=0.
\end{equation*}
In other words, $(\xi_1,\xi_2,\xi_3,\xi_4)$ is a critical point of the function
\begin{equation*}
F(p_1,p_2,p_3,p_4)=4\pi\sum\limits_{j< k}G(p_j,p_k).
\end{equation*}
\end{remark}

We then give the outer approximation:
\begin{lemma}\label{lm:outer}
When $\min\limits_{k=1,2,3,4}{|\Pi_{\xi_k}(y)|}\geq 2 R_0$, we have
\begin{equation}\label{eq:outersum}
w_{\lambda}(y)=2\ln{\lambda}+5\ln{2}-4\pi\sum\limits_{j< k} G(\xi_j,\xi_k)
\end{equation}
\begin{equation*}
+8\pi\sum\limits_{j=1}^4 G(y,\xi_j)- 16\lambda^2\ln{\lambda}+\lambda^2\tilde{f}(y),
\end{equation*}
where $\tilde{f}(y)$ is a smooth function of $y$ which is uniformly bounded with respect to $\lambda$.
\end{lemma}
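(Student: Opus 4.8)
The plan is to obtain Lemma \ref{lm:outer} directly from the decomposition (\ref{eq:approx}) together with the single-bubble outer expansion (\ref{eq:outer}), so that the proof is essentially a summation. The starting observation is that the hypothesis $\min_{k=1,2,3,4}\abs{\Pi_{\xi_k}(y)}\geq 2R_0$ places $y$ simultaneously in the outer region $\{\abs{\Pi_{\xi_k}(y)}\geq 2R_0\}$ of every one of the four bubbles. Since $R_0$ is fixed and the tetrahedron $\{\xi_1,\dots,\xi_4\}$ is fixed, these four outer regions overlap precisely on the set described in the hypothesis, so the expansion (\ref{eq:outer}) is available for each index $k$ at the same point $y$.

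With this in hand I would simply add the four expansions. For each $k$, (\ref{eq:outer}) gives $w_{\lambda,k}(y)=8\pi G(y,\xi_k)-4\lambda^2\ln\lambda+\lambda^2\tilde{f}_k(y)$, and summing over $k$ yields
\begin{equation*}
\tilde{w}_{\lambda}(y)=\sum_{k=1}^4 w_{\lambda,k}(y)=8\pi\sum_{k=1}^4 G(y,\xi_k)-16\lambda^2\ln\lambda+\lambda^2\sum_{k=1}^4\tilde{f}_k(y).
\end{equation*}
It then remains to add the constant $\overline{w}_{\lambda}$ from (\ref{eq:avg}); combining $2\ln\lambda+5\ln 2-4\pi\sum_{j<k}G(\xi_j,\xi_k)$ with the display above reproduces exactly the right-hand side of (\ref{eq:outersum}), provided one sets $\tilde{f}(y)=\sum_{k=1}^4\tilde{f}_k(y)$. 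Note in particular that the $-4\lambda^2\ln\lambda$ term from each of the four bubbles accumulates to the stated $-16\lambda^2\ln\lambda$, and the remaining $\lambda^2$-order contributions are collected into $\tilde{f}$.

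Finally I would record the regularity of $\tilde{f}$: each $\tilde{f}_k$ is, by the statement accompanying (\ref{eq:outer}), a smooth function of $y$ that is uniformly bounded with respect to $\lambda$ on the outer region, and a finite sum of such functions is again smooth and uniformly bounded, as required.

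There is no genuine analytic obstacle here, since the content of the lemma is already packaged in the per-bubble estimate (\ref{eq:outer}); the proof is pure bookkeeping. The only point deserving a moment's care is uniformity: one must check that the remainder $\lambda^2\tilde{f}_k(y)$ from the $k$-th bubble is controlled uniformly across the entire outer region rather than merely pointwise, so that the summed remainder $\lambda^2\tilde{f}(y)$ genuinely has the claimed form with $\tilde{f}$ bounded independently of $\lambda$. As (\ref{eq:outer}) already asserts this uniformity for each fixed $k$ and the sum over $k$ is finite, the conclusion follows without difficulty.
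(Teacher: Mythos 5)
Your proof is correct and is exactly the argument the paper intends: the lemma is stated there without a separate proof precisely because it follows by summing the per-bubble outer expansion (\ref{eq:outer}) over $k=1,\dots,4$ (valid simultaneously for all four bubbles under the hypothesis $\min_k|\Pi_{\xi_k}(y)|\geq 2R_0$) and adding the constant $\overline{w}_{\lambda}$ from (\ref{eq:avg}), which is what you do. Your closing remark on uniformity is the right point of care, and it is indeed already supplied by the statement accompanying (\ref{eq:outer}).
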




From the previous three lemmas, we can estimate the $e^{w_{\lambda}}$. In particular, we have
\begin{equation}\label{eq:exp}
e^{w_{\lambda}}\leq \sum\limits_{k=1}^4 e^{U_{\lambda,\xi_k}}\left[1+\theta_{\lambda}(y)\right],
\end{equation}
where $\theta_{\lambda}$ is uniformly bounded with respect to $y$ and $\lambda$ and has the property that for some constant $C>0$,
\begin{equation*}
|\theta_{\lambda}(y)|\leq C \lambda\sum\limits_{k=1}^4 \left[\frac{|\Pi_{\xi_k}(y)|}{\lambda}+1\right].
\end{equation*}
                                                                                                                                                                                                                                                                                                                                                                                                                                           
More precisely, when $|z|<\frac{R_0}{\lambda}$, we have
\begin{equation}\label{eq:expinner}
e^{w_{\lambda}(\Pi^{-1}_{\xi_k}(\lambda z))}=e^{U_{\lambda,\xi_k}}\big[1+4\pi\lambda^2 \sum\limits_{j,j\neq k} z\nabla^2_x G(\Pi^{-1}_{\xi_k}(x),\xi_j)\vert_{x=0} z^T
\end{equation}
\begin{equation*}
 -16\lambda^2\ln{\lambda}+O(\lambda^2)+O(\lambda^3|z|^3)+O(\lambda^3 |z|)\big].
\end{equation*}
When $|\Pi_{\xi_k}(y)|\geq R_0$ for $k=1,2,3,4$, we have
\begin{equation}\label{eq:expouter}
e^{w_{\lambda}(y)}=O(\lambda^2).
\end{equation}

Let us then estimate the error of the approximate solution by inserting the ansats $w_{\lambda}$ into the equation (\ref{eq:mfeS2}).
\begin{lemma}\label{lm:error}
Let $S_{\rho}(u)=\Delta_g u + \rho\left(\frac{e^u}{\int_{\mathbb{S}^2}e^u}-\frac{1}{4\pi}\right)$.  Then there exists a constant $C>0$ such that
\begin{equation*}
|S_{\rho}(w_{\lambda})(\Pi^{-1}_{\xi_k}(\lambda z))|\leq C\left[\lambda^2\ln{\frac{1}{\lambda}}+\frac{\ln{\frac{1}{\lambda}}}{(1+|z|^2)^2}+\frac{|z|^2}{(1+|z|^2)^2}\right],
\end{equation*}
for $|z|<\frac{R_0}{\lambda}$ and $k=1,2,3,4$,
and
\begin{equation*}
|S_{\rho}(w_{\lambda})(y)|\leq C \lambda^2\ln{\frac{1}{\lambda}},
\end{equation*}
when $|\Pi_{\xi_k}(y)|\geq R_0$ for all $k=1,2,3,4$.

Furthermore, we also have $S_{\rho}(w_{\lambda})$ is invariant under orthogonal transformations that belong to the symmetry group $T_d$ of the regular tetrahedron.
\end{lemma}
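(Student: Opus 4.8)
The plan is to substitute the ansatz $w_\lambda$ into $S_\rho$ and exploit that $w_\lambda$ was built to almost solve the linear part. Since $\overline{w}_\lambda$ is a constant and $\Delta_g w_\lambda=\sum_k\Delta_g w_{\lambda,k}$, equation (\ref{eq:bubble1}) gives $\Delta_g w_\lambda=-\sum_{k=1}^4 e^{U_{\lambda,\xi_k}}\eta_{R_0,\xi_k}+4m_0$, so that
$$S_\rho(w_\lambda)=\frac{\rho}{\int_{\mathbb{S}^2}e^{w_\lambda}}\,e^{w_\lambda}-\sum_{k=1}^4 e^{U_{\lambda,\xi_k}}\eta_{R_0,\xi_k}+\Big(4m_0-\frac{\rho}{4\pi}\Big).$$
By (\ref{eq:mass}) we have $m_0=2+O(\lambda^2)$, while $\rho=32\pi+\epsilon$ with $\epsilon=O(\lambda^2\ln\frac1\lambda)$ by (\ref{eq:rangepar}); hence the constant mismatch $4m_0-\frac{\rho}{4\pi}=O(\lambda^2\ln\frac1\lambda)$ already falls within the claimed error on both regions. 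It remains to control $\frac{\rho}{\int e^{w_\lambda}}e^{w_\lambda}-\sum_k e^{U_{\lambda,\xi_k}}\eta_{R_0,\xi_k}$. A preliminary step is to pin down the normalizing factor: integrating (\ref{eq:expinner}) near each $\xi_k$ and (\ref{eq:expouter}) in the complement, and using $\int e^{U_{\lambda,\xi_k}}=8\pi$ from (\ref{eq:massb}), the leading term is $32\pi$, the $-16\lambda^2\ln\lambda$ term integrates to a multiple of $\lambda^2\ln\frac1\lambda$, and the quadratic term $\int e^{U_{\lambda,\xi_k}}\cdot 4\pi\lambda^2 z\nabla^2_x G\,z^T$ contributes another $O(\lambda^2\ln\frac1\lambda)$ (the radial integral $\int^{R_0/\lambda}r^3(1+r^2)^{-2}\,dr$ being logarithmically divergent). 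This yields $\int e^{w_\lambda}=32\pi(1+O(\lambda^2\ln\frac1\lambda))$ and therefore $P:=\rho/\int e^{w_\lambda}=1+O(\lambda^2\ln\frac1\lambda)$.

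For the inner estimate I would fix $k$ and work in $|z|<\frac{R_0}{\lambda}$, where $\eta_{R_0,\xi_k}\equiv 1$ and the bubbles centered at $\xi_j$ with $j\neq k$ contribute only $O(\lambda^2)$. Using (\ref{eq:expinner}),
$$P\,e^{w_\lambda}-e^{U_{\lambda,\xi_k}}=e^{U_{\lambda,\xi_k}}\Big[(P-1)+4\pi\lambda^2\sum_{j\neq k}z\nabla^2_x G\,z^T-16\lambda^2\ln\lambda+O(\lambda^2)+O(\lambda^3|z|^3)+O(\lambda^3|z|)\Big].$$
Since $e^{U_{\lambda,\xi_k}}(\Pi^{-1}_{\xi_k}(\lambda z))=2\lambda^{-2}(1+|z|^2)^{-2}(1+\lambda^2|z|^2)^2$, each correction gains the factor $\lambda^{-2}(1+|z|^2)^{-2}$: the $(P-1)$, $O(\lambda^2)$ and $-16\lambda^2\ln\lambda$ terms all become $O(\ln\frac1\lambda)(1+|z|^2)^{-2}$; the quadratic term becomes $O(|z|^2)(1+|z|^2)^{-2}$; and the cubic and linear remainders carry an extra factor $\lambda|z|\leq R_0$ and are absorbed into $|z|^2(1+|z|^2)^{-2}$. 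Adding the constant mismatch $O(\lambda^2\ln\frac1\lambda)$ reproduces exactly the three profiles in the stated inner bound.

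The outer estimate is immediate: when $|\Pi_{\xi_k}(y)|\geq R_0$ for every $k$, estimate (\ref{eq:expouter}) gives $e^{w_\lambda}=O(\lambda^2)$, so $\frac{\rho}{\int e^{w_\lambda}}e^{w_\lambda}=O(\lambda^2)$; likewise each $e^{U_{\lambda,\xi_k}}\eta_{R_0,\xi_k}=O(\lambda^2)$; together with the constant mismatch this gives $|S_\rho(w_\lambda)(y)|\leq C\lambda^2\ln\frac1\lambda$. For the symmetry claim I would first verify that $w_\lambda$ itself is $T_d$-invariant. Each $T\in T_d$ acts as an isometry of $\mathbb{S}^2$ permuting $\{\xi_1,\dots,\xi_4\}$, so $U_{\lambda,\xi_k}(T^{-1}y)=U_{\lambda,T\xi_k}(y)$ and $\eta_{R_0,\xi_k}(T^{-1}y)=\eta_{R_0,T\xi_k}(y)$, while $m_0$ is independent of the center; since $\Delta_g$ commutes with $T$ and the normalization $\int w_{\lambda,k}=0$ is preserved, uniqueness in (\ref{eq:bubble1}) forces $w_{\lambda,k}(T^{-1}y)=w_{\lambda,T\xi_k}(y)$. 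Summing over $k$ and reindexing shows $\tilde w_\lambda$, hence $w_\lambda$, is invariant. Because $\Delta_g$ commutes with the isometry $T$, because $e^{w_\lambda}$ is then invariant, and because $\int e^{w_\lambda}$ is unchanged under the measure-preserving $T$, the full expression $S_\rho(w_\lambda)$ inherits the $T_d$-invariance.

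I expect the main obstacle to be the bookkeeping in the inner region: computing $\int e^{w_\lambda}$ precisely enough to control $P-1$, and checking that each remainder in (\ref{eq:expinner}), once multiplied by the singular weight $e^{U_{\lambda,\xi_k}}\sim \lambda^{-2}(1+|z|^2)^{-2}$, is dominated by one of the three right-hand profiles. By contrast, the outer estimate and the symmetry argument are comparatively routine.
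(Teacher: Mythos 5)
Your proposal is correct and follows essentially the same route as the paper: use \eqref{eq:bubble1} and \eqref{eq:mass} to replace $\Delta_g w_\lambda$, estimate $\int_{\mathbb{S}^2}e^{w_\lambda}$ from \eqref{eq:expinner}--\eqref{eq:expouter} (the paper computes it more precisely as $32\pi-896\pi\lambda^2\ln\lambda+O(\lambda^2)$, but your coarser $32\pi(1+O(\lambda^2\ln\tfrac1\lambda))$ suffices here), and then weigh each correction in \eqref{eq:expinner} against $e^{U_{\lambda,\xi_k}}\sim 2\lambda^{-2}(1+|z|^2)^{-2}$ to obtain the three stated profiles. Your explicit equivariance argument for the $T_d$-invariance is a welcome elaboration of what the paper leaves implicit, but it is not a different method.
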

\begin{proof}
We first use (\ref{eq:expinner}) and (\ref{eq:expouter}) to estimate the integral of $e^{w_{\lambda}}$, i.e.
\begin{equation}\label{eq:expint}
\int_{\mathbb{S}^2}e^{w_{\lambda}}=4\int_{B(\xi_k,R_0)}e^{w_{\lambda}}+\int_{\mathbb{S}^2\setminus(\bigcup\limits_{k=1}^4 B(\xi_k,R_0))}e^{w_{\lambda}}
\end{equation}
\begin{equation*}
=4 \int_{B(0,\frac{R_0}{\lambda})}\frac{8}{(1+|z|^2)^2}\big[1+4\pi\lambda^2 \sum\limits_{j,j\neq k} z\nabla^2_x G(\Pi^{-1}_{\xi_k}(x),\xi_j)\vert_{x=0} z^T
\end{equation*}
\begin{equation*}
 -16\lambda^2\ln{\lambda}+O(\lambda^2)+O(\lambda^3|z|^3)+O(\lambda^3 |z|)\big].
\end{equation*}
\begin{equation*}
+\int_{\mathbb{S}^2\setminus(\bigcup\limits_{k=1}^4 B(\xi_k,R_0))}e^{w_{\lambda}}
\end{equation*}
\begin{equation*}
=32\pi-896\pi\lambda^2\ln{\lambda}+O(\lambda^2).
\end{equation*}

When $|z|<\frac{R_0}{\lambda}$, we have
\begin{equation*}
S_{\rho}(w_{\lambda})(\Pi^{-1}_{\xi_k}(\lambda z))=\Delta_g w_{\lambda}(\Pi^{-1}_{\xi_k}(\lambda z))+\rho\left(\frac{e^{w_{\lambda}(\Pi^{-1}_{\xi_k}(\lambda z))}}{32\pi-896\pi\lambda^2\ln{\lambda}+O(\lambda^2)}-\frac{1}{4\pi}\right)
\end{equation*}
\begin{equation*}
=8+O(\lambda^2)-e^{U_{\lambda,\xi_k}}+\frac{(32\pi+\epsilon)e^{w_{\lambda}(\Pi^{-1}_{\xi_k}(\lambda z))}}{32\pi-896\pi\lambda^2\ln{\lambda}+O(\lambda^2)}-\frac{32\pi+\epsilon}{4\pi}
\end{equation*}
\begin{equation*}
=-\frac{\epsilon}{4\pi} + O(\lambda^2)+\frac{\epsilon+896\pi\lambda^2\ln{\lambda}+O(\lambda^2)}{32\pi-896\pi\lambda^2\ln{\lambda}+O(\lambda^2)}\cdot e^{U_{\lambda,\xi_k}}
\end{equation*}
\begin{equation*}
+O\left(\frac{\ln{\frac{1}{\lambda}}}{(1+|z|^2)^2}\right)+O\left(\frac{|z|^2}{(1+|z|^2)^2}\right).
\end{equation*}
We know from (\ref{eq:rangepar}) that $\epsilon=O(\lambda^2\ln{\lambda})$, then we have 
\begin{equation*}
|S_{\rho}(w_{\lambda})(\Pi^{-1}_{\xi_k}(\lambda z))|\leq C\left[\lambda^2\ln{\frac{1}{\lambda}}+\frac{\ln{\frac{1}{\lambda}}}{(1+|z|^2)^2}+\frac{|z|^2}{(1+|z|^2)^2}\right]
\end{equation*}
for $|z|<\frac{R_0}{\lambda}$ and $k=1,2,3,4$.

Similarly, we can estimate the outer error using (\ref{eq:expint}):
\begin{equation*}
S_{\rho}(w_{\lambda})(y)=-\frac{\epsilon}{4\pi}+O(\lambda^2) +\frac{32\pi+\epsilon}{32\pi+O(\lambda^2\ln{\lambda})}O(\lambda^2)
\end{equation*}
since (\ref{eq:expouter}) holds for all $|\Pi_{\xi_k}(y)|\geq R_0$ and $k=1,2,3,4$.

The rest of the lemma follows from the last identity.
\end{proof}

The equation (\ref{eq:mfeS2}) has a variational structure, i.e. critical points of the energy functional
\begin{equation}\label{eq:energy}
J_{\rho}(u)=\frac{1}{2}\int_{\mathbb{S}^2}|\nabla u|^2-\rho\ln{\left(\int_{\mathbb{S}^2}e^{u}\right)}+\frac{\rho}{4\pi}\int_{\mathbb{S}^2} u
\end{equation}
correspond to the solutions of the equation (\ref{eq:mfeS2}). Our next goal is to estimate the energy functional of the approximate solution $w_{\lambda}$.
\begin{lemma}\label{lm:energyest}
The energy of $w_{\lambda}$ is 
\begin{equation*}
J_{\rho}(w_{\lambda})=-64\pi^2 \sum\limits_{j< k} G(\xi_j,\xi_k)-32\pi\ln{(4\pi)}+2\epsilon \ln{\lambda}
\end{equation*}
\begin{equation*}
+384\pi \lambda^2\ln{\lambda}-\epsilon\left(\ln{(\pi)}-4\pi \sum\limits_{j< k}G(\xi_j,\xi_k)\right)+O(\lambda^2).
\end{equation*}
\end{lemma}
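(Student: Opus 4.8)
The plan is to evaluate the three pieces of $J_\rho$ in (\ref{eq:energy}) separately and then collect terms by their order in $\lambda$ and $\epsilon$. Write $w_\lambda=\tilde w_\lambda+\overline w_\lambda$ as in (\ref{eq:approx}). The linear term is immediate: since each $w_{\lambda,k}$ has zero average, so does $\tilde w_\lambda$, whence $\int_{\mathbb{S}^2}w_\lambda=4\pi\overline w_\lambda$ and $\frac{\rho}{4\pi}\int_{\mathbb{S}^2}w_\lambda=\rho\,\overline w_\lambda$, which by (\ref{eq:avg}) already exhibits $2\epsilon\ln\lambda$, the interaction $-4\pi\rho\sum_{j<k}G(\xi_j,\xi_k)$, and the numerical constant $5\rho\ln 2$. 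The logarithmic term is handled by the mass expansion (\ref{eq:expint}): since $\int_{\mathbb{S}^2}e^{w_\lambda}=32\pi-896\pi\lambda^2\ln\lambda+O(\lambda^2)$, expanding the logarithm gives $-\rho\ln\int_{\mathbb{S}^2}e^{w_\lambda}=-\rho\ln(32\pi)+896\pi\lambda^2\ln\lambda+O(\lambda^2)$, where I use $\epsilon=O(\lambda^2\ln\lambda)$ from (\ref{eq:rangepar}) to absorb the cross terms.

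For the Dirichlet term I would integrate by parts and use the defining equation (\ref{eq:bubble1}). Since $\overline w_\lambda$ is constant, $\int_{\mathbb{S}^2}|\nabla w_\lambda|^2=\int_{\mathbb{S}^2}|\nabla\tilde w_\lambda|^2=\sum_{k}\int_{\mathbb{S}^2}\tilde w_\lambda\,e^{U_{\lambda,\xi_k}}\eta_{R_0,\xi_k}$, the constant $m_0$ dropping out because $\int_{\mathbb{S}^2}\tilde w_\lambda=0$. I then split $\tilde w_\lambda=\sum_j w_{\lambda,j}$ into diagonal ($j=k$) and off-diagonal ($j\neq k$) contributions. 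The off-diagonal pieces are interaction energies: near $\xi_k$ the factor $e^{U_{\lambda,\xi_k}}\eta_{R_0,\xi_k}$ concentrates the mass $4\pi m_0=8\pi+O(\lambda^2)$ at $\xi_k$, so by (\ref{eq:peak}) each such integral equals $8\pi\,w_{\lambda,j}(\xi_k)+\dots=64\pi^2 G(\xi_j,\xi_k)+O(\lambda^2\ln\lambda)$, producing after summation the leading $\Sigma:=\sum_{j<k}G(\xi_j,\xi_k)$ term. The diagonal pieces are the bubble self-energies, evaluated from the inner expansion (\ref{eq:inner1}) of Lemma \ref{lm:inner} against the measure $e^{U_{\lambda,\xi_k}}dA=\tfrac{8}{(1+|z|^2)^2}dz$ (in the variable $x=\lambda z$); the constant part of the expansion yields the $\ln\lambda$ and $\ln 2$ contributions, while the finite integral $\int_{\mathbb{R}^2}\ln\tfrac{1}{(1+|z|^2)^2}\tfrac{8}{(1+|z|^2)^2}dz$ furnishes a numerical constant.

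Collecting everything, I expect the divergent $\ln\lambda$ and the $\ln 2$ contributions carrying no factor of $\epsilon$ to cancel between the halved Dirichlet term and the linear term $\rho\overline w_\lambda$; the surviving numerical constants should assemble, via $\ln(32\pi)=\ln(4\pi)+3\ln2$, into $-32\pi\ln(4\pi)$, the surviving interaction terms into $-64\pi^2\Sigma$, and the $\epsilon$-linear remainders into $2\epsilon\ln\lambda-\epsilon(\ln\pi-4\pi\Sigma)$ once $\ln(32\pi)=\ln\pi+5\ln2$ is used. Finally the $\lambda^2\ln\lambda$ contributions from the logarithmic term ($+896\pi$) and from the second-order parts of the Dirichlet term should combine to the stated $384\pi\lambda^2\ln\lambda$, with all genuinely lower-order remainders absorbed into $O(\lambda^2)$.

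The hard part is precisely this last step: the faithful bookkeeping of all $O(\lambda^2\ln\lambda)$ contributions. These arise from several sources that must be tracked simultaneously — the $-4\lambda^2\ln\lambda$ and $2\ln(1+\lambda^2|z|^2)$ terms in the inner expansion, the second-moment (Hessian of $G$) corrections in the off-diagonal integrals, and the $O(\lambda^2)$ correction to the mass $m_0=2+O(\lambda^2)$ — and one must verify that the $R_0$-dependent contributions from the cut-off region, where $w_{\lambda,k}$ is controlled only through the gluing estimate of Lemma \ref{lm:gluelm}, cancel so that the final coefficient is $R_0$-independent. The leading cancellation of the $\ln\lambda$ divergence is a useful consistency check along the way, and the bound $\epsilon=O(\lambda^2\ln\lambda)$ from (\ref{eq:rangepar}) is what guarantees that products such as $\epsilon\cdot\lambda^2\ln\lambda$ are harmless.
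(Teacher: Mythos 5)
Your plan follows the paper's proof step for step: the same splitting of $J_\rho$ into the Dirichlet, logarithmic and linear parts, the same use of (\ref{eq:expint}) and of $\frac{\rho}{4\pi}\int_{\mathbb{S}^2}w_\lambda=\rho\,\overline w_\lambda$ via (\ref{eq:avg}), and the same reduction of the Dirichlet term through (\ref{eq:bubble1}) to a diagonal integral $I_{1,1}=\int e^{U_{\lambda,\xi_1}}\eta_{R_0,\xi_1}w_{\lambda,1}$ and interaction integrals $I_{j,k}=\int e^{U_{\lambda,\xi_j}}\eta_{R_0,\xi_j}w_{\lambda,k}$, $j\neq k$. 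The cancellations you predict (of $\ln\lambda$ between the Dirichlet and linear parts, and the assembly of $-32\pi\ln(4\pi)$, of $-64\pi^2\sum_{j<k}G(\xi_j,\xi_k)$, and of $2\epsilon\ln\lambda-\epsilon(\ln\pi-4\pi\sum_{j<k}G(\xi_j,\xi_k))$) are exactly what occurs in the paper's computation.

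The gap is that you never carry out the one computation the lemma is really about: the coefficient $384\pi$ of $\lambda^2\ln\lambda$. This is not a cosmetic term; it is precisely what determines $\lambda_\ast$ in Section 6 (Lemma \ref{lm:critical} extracts $\epsilon=(384\pi+o(1))\lambda_\ast^2\ln\frac{1}{\lambda_\ast}$ from it), so deferring ``the faithful bookkeeping of all $O(\lambda^2\ln\lambda)$ contributions'' leaves the statement unproved. Concretely, what is missing is: (i) for the diagonal term, (\ref{eq:inner1}) yields $I_{1,1}=-32\pi\ln\lambda-32\pi\ln 2-64\pi\lambda^2\ln\lambda+O(\lambda^2)$, where $-64\pi\lambda^2\ln\lambda$ comes half from the constant $-4\lambda^2\ln\lambda$ and half from $16\int_{B(0,R_0/\lambda)}\frac{\ln(1+\lambda^2|z|^2)}{(1+|z|^2)^2}\,dz=-32\pi\lambda^2\ln\lambda+O(\lambda^2)$; (ii) for the interaction terms, your point-mass approximation $I_{j,k}\approx 8\pi\,w_{\lambda,k}(\xi_j)$ via (\ref{eq:peak}) captures only $64\pi^2G(\xi_j,\xi_k)-32\pi\lambda^2\ln\lambda$ and misses exactly half of the true correction: the second-moment term in (\ref{eq:outer}), namely $32\pi\lambda^2\int_{B(0,R_0/\lambda)}\frac{z\,\nabla_x^2G\,z^T}{(1+|z|^2)^2}\,dz=-32\pi\lambda^2\ln\lambda+O(\lambda^2)$, computed from $\mathrm{Tr}\,\nabla_x^2G(\cdot,\xi_k)\vert_{x=0}=\frac{1}{\pi}$ (valid because $-\Delta_g G=-\frac{1}{4\pi}$ away from the pole) together with $\int_{B(0,R_0/\lambda)}\frac{|z|^2}{(1+|z|^2)^2}\,dz=2\pi\ln\frac{1}{\lambda}+O(1)$; this gives $I_{j,k}=64\pi^2G(\xi_j,\xi_k)-64\pi\lambda^2\ln\lambda+O(\lambda^2)$. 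Then the Dirichlet term $2I_{1,1}+6I_{j,k}$ contributes $-512\pi\lambda^2\ln\lambda$ and the logarithmic term contributes $+896\pi\lambda^2\ln\lambda$, whence $384\pi$. Note also that your worry about $R_0$-dependence evaporates once these integrals are written down: every logarithm enters as $\ln(R_0/\lambda)=\ln\frac{1}{\lambda}+O(1)$, and the $O(1)$ parts are multiplied by $\lambda^2$, hence absorbed into $O(\lambda^2)$. (One side remark: the finite integral you list for the diagonal piece equals $-16\pi$ and, carried through, adds a constant $-32\pi$ to the expansion which is absent from the stated lemma and from the paper's own evaluation of $I_{1,1}$; being independent of $\lambda$ and $\epsilon$, it has no effect on the reduced problem.)
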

\begin{proof}
From (\ref{eq:expint}), we can compute
\begin{equation}\label{eq:energy2}
-\rho\ln{\left(\int_{\mathbb{S}^2}e^{w_{\lambda}}\right)}=-(32\pi+\epsilon)\ln{(32\pi-896\pi\lambda^2\ln{\lambda}+O(\lambda^2))}
\end{equation}
\begin{equation*}
=-32\pi (\ln{(32\pi)}-28\lambda^2\ln{\lambda}+O(\lambda^2))-\epsilon (\ln{(32\pi)}-28\lambda^2\ln{\lambda}+O(\lambda^2)).
\end{equation*}

Also, we can easily compute
\begin{equation}\label{eq:energy3}
\frac{\rho}{4\pi}\int_{\mathbb{S}^2}w_{\lambda}=(32\pi+\epsilon)\overline{w}_{\lambda}
\end{equation}
\begin{equation*}
=32\pi(2\ln{\lambda}+5\ln{2}-4\pi\sum\limits_{j< k}G(\xi_j,\xi_k))+\epsilon(2\ln{\lambda}+5\ln{2}-4\pi\sum\limits_{j< k} G(\xi_j,\xi_k)).
\end{equation*}
Then, the only term remaining is the following
\begin{equation}\label{eq:energy1}
\frac{1}{2}\int_{\mathbb{S}^2}|\nabla w_{\lambda}|^2=\frac{1}{2}\langle -\Delta_g w_{\lambda}, w_{\lambda}\rangle=\frac{1}{2}\langle -\Delta_g w_{\lambda}, \tilde{w}_{\lambda}\rangle
\end{equation}
\begin{equation*}
=\frac{1}{2}\int_{\mathbb{S}^2}\sum\limits_{k=1}^4 e^{U_{\lambda,\xi_k}}\eta_{R_0,\xi_k} \cdot \sum\limits_{k=1}^4 w_{\lambda,k}
\end{equation*}
\begin{equation*}
=2I_{1,1}+\frac{1}{2}\sum\limits_{j,k,j\neq k}I_{j,k},
\end{equation*}
where
\begin{equation*}
I_{1,1}=\int_{\mathbb{S}^2} e^{U_{\lambda,\xi_1}}\eta_{R_0,\xi_1}w_{\lambda,1},
\end{equation*}
and
\begin{equation*}
I_{j,k}=\int_{\mathbb{S}^2}e^{U_{\lambda,\xi_j}}\eta_{R_0,\xi_j}w_{\lambda,k},
\end{equation*}
for $j\neq k$.

Let us use (\ref{eq:inner1}) to compute $I_{1,1}$ first
\begin{equation*}
I_{1,1}=\int_{B(0,\frac{R_0}{\lambda})}\frac{8 w_i(\Pi^{-1}_{\xi_1}(\lambda z))}{(1+|z|^2)^2}dz+O(\lambda^2)
\end{equation*}
\begin{equation*}
=\left(-4\ln{\lambda}-4\ln{2}-4\lambda^2\ln{\lambda}\right)\int_{B(0,\frac{R_0}{\lambda})}\frac{8}{(1+|z|^2)^2}dz
\end{equation*}
\begin{equation*}
-16\int_{B(0,\frac{R_0}{\lambda})}\frac{\ln{(1+|z|^2)}}{(1+|z|^2)^2}dz+16\int_{B(0,\frac{R_0}{\lambda})}\frac{\ln{(1+\lambda^2|z|^2)}}{(1+|z|^2)^2}dz+O(\lambda^2)
\end{equation*}
\begin{equation*}
=-32\pi\ln{\lambda}-32\pi\ln{2}-64\pi\lambda^2\ln{\lambda}+O(\lambda^2).
\end{equation*}

Then, we use (\ref{eq:outer}) to compute $I_{j,k}$
\begin{equation*}
I_{j,k}=\int_{B(0,\frac{R_0}{\lambda})}\frac{8w_{\lambda,k}(\Pi^{-1}_{\xi_j}(\lambda z))}{(1+|z|^2)^2}dz+O(\lambda^2)
\end{equation*}
\begin{equation*}
=\int_{B(0,\frac{R_0}{\lambda})}\frac{8(8\pi G(\Pi^{-1}_{\xi_j}(\lambda z),\xi_k)-4\lambda^2\ln{\lambda}+\lambda^2 \tilde{f}_k(\Pi^{-1}_{\xi_j}(\lambda z))}{(1+|z|^2)^2}dz + O(\lambda^2)
\end{equation*}
\begin{equation*}
=8\pi G(\xi_j,\xi_k)\int_{B(0,\frac{R_0}{\lambda})}\frac{8}{(1+|z|^2)^2}dz+32\pi\lambda^2\int_{B(0,\frac{R_0}{\lambda})}\frac{z\nabla^2_x G(\Pi^{-1}_{\xi_j}(x),\xi_k)\vert_{x=0} z^T}{(1+|z|^2)^2}dz
\end{equation*}
\begin{equation*}
-32\pi \lambda^2\ln{\lambda}+O(\lambda^2)
\end{equation*}
\begin{equation*}
=64\pi^2 G(\xi_j,\xi_k)-64\pi\lambda^2\ln{\lambda}+O(\lambda^2).
\end{equation*}

Therefore, by (\ref{eq:energy2}), (\ref{eq:energy3}) and (\ref{eq:energy1}) we have
\begin{equation*}
J_{\rho}(w_{\lambda})=-64\pi^2 \sum\limits_{j< k} G(\xi_j,\xi_k)-32\pi\ln{(4\pi)}+2\epsilon \ln{\lambda}
\end{equation*}
\begin{equation*}
+384\pi \lambda^2\ln{\lambda}-\epsilon\left(\ln{(\pi)}-4\pi \sum\limits_{j< k}G(\xi_j,\xi_k)\right)+O(\lambda^2).
\end{equation*}
\end{proof}

We also need the following lemma on the dependence of $\lambda$ of the approximate solution $w_{\lambda}$ later in this paper:
\begin{lemma}\label{lm:deriv}
Inside the ball $x\in B(0,R_0)$, we have
\begin{equation*}
\frac{\partial w_{\lambda}}{\partial\lambda}(\Pi^{-1}_{\xi_k}(x))=\frac{2(|z|^2-1)}{\lambda(|z|^2+1)}+O(1),
\end{equation*}
where $x=\lambda z$.
When $\min\limits_{k=1,2,3,4}|\Pi_{\xi_k}(y)|\geq R_0$, we have
\begin{equation*}
\frac{\partial w_{\lambda}}{\partial\lambda}(y)=\frac{2}{\lambda}+O(1).
\end{equation*}
\end{lemma}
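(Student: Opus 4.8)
The plan is to differentiate the ansatz $w_{\lambda}=\sum_{k=1}^{4}w_{\lambda,k}+\overline{w}_{\lambda}$ of (\ref{eq:approx}) term by term, always at a \emph{fixed} physical point $y$, i.e. holding $x=\Pi_{\xi_k}(y)$ fixed while $z=x/\lambda$ is allowed to vary with $\lambda$. The constant piece is immediate: since $\overline{w}_{\lambda}=2\ln\lambda+5\ln 2-4\pi\sum_{j<k}G(\xi_j,\xi_k)$ depends on $\lambda$ only through $2\ln\lambda$, we get $\partial_\lambda\overline{w}_{\lambda}=2/\lambda$ exactly, and this is the origin of the $2/\lambda$ appearing in both formulas. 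Everything else reduces to differentiating the bubble potentials $w_{\lambda,k}$. Crucially, I would \emph{not} differentiate the asymptotic expansions of Lemmas \ref{lm:inner} and \ref{lm:outer}, but instead differentiate the Green representation (\ref{eq:center}) directly.

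Differentiating (\ref{eq:center}) under the integral sign gives
\begin{equation*}
\partial_\lambda w_{\lambda,k}(y)=\int_{\mathbb{S}^2}G(y,y')\,\partial_\lambda\!\big(e^{U_{\lambda,\xi_k}}\big)\,\eta_{R_0,\xi_k}(y')\,dH^2(y')-\partial_\lambda m_0\int_{\mathbb{S}^2}G(y,y')\,dH^2(y').
\end{equation*}
The second term vanishes because $\int_{\mathbb{S}^2}G(y,y')\,dH^2(y')=0$, so the whole $\lambda$-dependence of $m_0$ disappears. By (\ref{eq:stardb}) one has $\partial_\lambda U_{\lambda,\xi_k}=\partial_\lambda V_\lambda=\frac1\lambda\cdot\frac{2(|z|^2-1)}{1+|z|^2}$, i.e. $e^{U_{\lambda,\xi_k}}$ is differentiated into itself times the \emph{bounded} dilation profile $\frac{2(|z|^2-1)}{1+|z|^2}$ scaled by $1/\lambda$. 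I would then re-run the two local computations (\ref{eq:inner}) and (\ref{eq:outer}) verbatim, now carrying this extra bounded weight. Near $\xi_k$ (the regime $|z|<R_0/\lambda$), the weighted density $e^{U_{\lambda,\xi_k}}\partial_\lambda U_{\lambda,\xi_k}$ convolved against the logarithmic part of $G$ evaluates, through the associated planar Newtonian potential, to $-\tfrac{4}{\lambda(1+|z|^2)}+O(1)$, so $\partial_\lambda w_{\lambda,k}=-\tfrac{4}{\lambda(1+|z|^2)}+O(1)$; together with $\partial_\lambda\overline{w}_\lambda=2/\lambda$ this reconstructs exactly $\partial_\lambda V_\lambda$, the $\lambda$-derivative of the main term $V_\lambda(x)$ of (\ref{eq:innersum}). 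Away from $\xi_k$ (where such a point lies in the outer regime of the $k$-th bubble), the decisive fact is that the weighted density has \emph{vanishing total mass} to leading order, since $\int_{\mathbb{R}^2}\frac{16(|z'|^2-1)}{(1+|z'|^2)^3}\,dz'=0$; hence no $1/\lambda$ logarithmic singularity survives and the contribution is merely $O(1)$.

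Assembling the pieces yields the two stated estimates. In the inner region near $\xi_k$,
\begin{equation*}
\partial_\lambda w_\lambda=\partial_\lambda w_{\lambda,k}+\sum_{j\neq k}\partial_\lambda w_{\lambda,j}+\partial_\lambda\overline{w}_\lambda=-\frac{4}{\lambda(1+|z|^2)}+O(1)+\frac{2}{\lambda}=\frac{2(|z|^2-1)}{\lambda(1+|z|^2)}+O(1),
\end{equation*}
where each off-diagonal term $\partial_\lambda w_{\lambda,j}$ ($j\neq k$) is $O(1)$ by the outer estimate just described. When $\min_k|\Pi_{\xi_k}(y)|\geq R_0$, every $w_{\lambda,k}$ sits in its outer regime, so $\sum_k\partial_\lambda w_{\lambda,k}=O(1)$ and only $\partial_\lambda\overline{w}_\lambda=2/\lambda$ survives, giving $\partial_\lambda w_\lambda=2/\lambda+O(1)$. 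The main obstacle, and the reason for working with the representation (\ref{eq:center}) rather than the expansions, is that Lemmas \ref{lm:inner} and \ref{lm:outer} control only $w_\lambda$ itself: an $O(\cdot)$ remainder (for instance the terms $\lambda^2 f(x)$, $O(\lambda^3|z|^3)$, $O(\lambda^3|z|)$ of (\ref{eq:innersum})) need not have a bounded $\lambda$-derivative, so $\partial_\lambda$ cannot be applied to the $O$-symbols directly. The delicate point is therefore precisely the mass cancellation $\int_{\mathbb{R}^2}\frac{16(|z'|^2-1)}{(1+|z'|^2)^3}\,dz'=0$, which ensures that differentiation in $\lambda$ produces no new $1/\lambda$ singularity beyond the explicit dilation profile and the $2/\lambda$ coming from $\overline{w}_\lambda$.
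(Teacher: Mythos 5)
Your proposal is correct and is essentially the argument the paper has in mind: the paper omits the proof, saying only that one should mimic the computations behind Lemmas \ref{lm:inner} and \ref{lm:outer}, and that is precisely what you do --- differentiating the Green's-function representation of $w_{\lambda,k}$ (thereby correctly avoiding differentiation of $O(\cdot)$ remainders, and using $\int_{\mathbb{S}^2}G(y,y')\,dH^2(y')=0$ to discard $\partial_\lambda m_0$) and re-running the inner/outer convolution estimates with the density $e^{U_{\lambda,\xi_k}}\partial_\lambda U_{\lambda,\xi_k}=e^{U_{\lambda,\xi_k}}\varphi_0(z)/\lambda$. Your two key computations check out: the zero-mass cancellation $\int_{\mathbb{R}^2}\frac{16(|z'|^2-1)}{(1+|z'|^2)^3}\,dz'=0$ (equivalently, $\int_{\mathbb{R}^2}e^{V_\lambda}=8\pi$ is $\lambda$-independent), which kills any new $1/\lambda$ term in the outer regime, and the potential identity $-\frac{1}{2\pi}\int_{\mathbb{R}^2}\ln|z-z'|\,\frac{16(|z'|^2-1)}{(1+|z'|^2)^3}\,dz'=\varphi_0(z)-2=-\frac{4}{1+|z|^2}$ (the unique bounded solution of $-\Delta\psi=e^{V_0}\varphi_0$ vanishing at infinity), which together with $\partial_\lambda\overline{w}_\lambda=2/\lambda$ reproduces exactly the stated inner expansion.
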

One can mimic the calculations we did for the derivation of Lemma \ref{lm:inner} and Lemma \ref{lm:outer} and follow the same idea we used in establishing Lemma \ref{lm:gluelm} to prove this lemma, we omit the details here for simplicity.

\section{The Linearized Operator}
\noindent In this section, we will establish a solvability theory for the linearized operator under suitable orthogonality condition.

Let us introduce an operator
\begin{equation}\label{eq:linear2}
\mathcal{L}(u)=\Delta_g u + \frac{\rho}{\int_{\mathbb{S}^2}e^{w_{\lambda}}}e^{w_{\lambda}}u.
\end{equation}

The above operator is connected with the linearized operator of $S_{\rho}$ through the following
\begin{equation}\label{eq:linear3}
S^{'}_{\rho}(w_{\lambda})(u)=\mathcal{L}\left(u-\frac{\int_{\mathbb{S}^2}e^{w_{\lambda}}u}{\int_{\mathbb{S}^2}e^{w_{\lambda}}}\right).
\end{equation}

Let
\begin{equation}\label{eq:linearscale}
L(u)=\lambda^2 \mathcal{L}(u).
\end{equation}

If we consider the isothermal coordinates at $\xi_k$ and blow up the sphere $\mathbb{S}^2$ by the scale $\lambda$ to $\mathbb{S}^2_{\lambda}$, then the linearized operator $\mathcal{L}$ scaled by $4\lambda^2$ formally approaches a linear operator $\tilde{L}$ in $\mathbb{R}^2$, i.e.
\begin{equation}\label{eq:linearlim}
\tilde{L}(u)=\Delta_{z} u +\frac{8}{(1+|z|^2)^2}u,
\end{equation}
where $z=\frac{\Pi_{\xi_k}(y)}{\lambda}$.

The operator $\tilde{L}$ can be obtained by linearizing the equation $\Delta u+ e^u=0$ at the radial solution $V_0(z)=\ln{\left(\frac{8}{(1+|z|^2)^2}\right)}$. An important fact we are going to employ in developing the solvability theory is the non-degeneracy of $V_0$ modulo the invariance of the equations under translations and dilations, i.e.
\begin{equation*}
\zeta\mapsto V_0(z-\zeta); \textrm{ }s\mapsto V_0(z/s)-2\ln{s}.
\end{equation*}
Thus we set,
\begin{equation*}
\varphi_k(z)=\frac{\partial}{\partial{\zeta_k}}V_0(z+\zeta)\rvert_{\zeta=0}, \textrm{ }i=1,2.
\end{equation*}
\begin{equation*}
\varphi_0(z)=\frac{\partial}{\partial{s}}\left[V_0(z/s)-2\ln{s}\right]\rvert_{s=1}.
\end{equation*}
Direct computation shows that 
\begin{equation*}
\varphi_k=\frac{-4z_k}{1+|z|^2},
\end{equation*}
for $k=1,2$ and
\begin{equation*}
\varphi_0=\frac{2(|z|^2-1)}{1+|z|^2}.
\end{equation*}

It is shown that the only bounded solutions of $\tilde{L}(u)=0$ in $\mathbb{R}^2$ are precisely the linear combinations of the $\varphi_k$, $k=0,1,2$, see Baraket and Pacard's paper \cite{Baraket1997} for a detailed proof. Let us define $\varphi_{i,j}\left(\frac{y}{\lambda}\right):=\varphi_i\left(\frac{\Pi_{\xi_j}(y)}{\lambda}\right)$ as a function on $\mathbb{S}^2_{\lambda}$ without ambiguity, where $i=0,1,2$, $j=1,2,3,4$ and $y\in\mathbb{S}^2$.

Moreover, let us pick a large but fixed number $R_1>0$. We introduce another type of cut-off functions:
\begin{equation*}\label{eq:cutoff2}
\chi_{R}(s)=1 \textrm{ for }s\leq R; \chi_{R}(s)=0 \textrm{ for } s\geq R+1; 0<\chi_{R}<1 \textrm{ for } R<s<R+1.
\end{equation*}
We further assume that 
\begin{equation*}
\abs{\chi'_{R}(s)}\leq 2.
\end{equation*}
Let us denote $\chi_{R,k}(\frac{y}{\lambda})=\chi_{R}(\frac{\abs{\Pi_{\xi_k}(y)}}{\lambda})$, $k=1,2,3,4$.

Then, let us introduce some functional set-ups of the problem.

Let
\begin{equation*}
L^{p}_{s}(\mathbb{S}^2_{\lambda})=\big\{u\in L^{p}(\mathbb{S}^2_{\lambda})\rvert u\left(\frac{y}{\lambda}\right)=u\left(\frac{Ty}{\lambda}\right) \textrm{ for all }T\in T_d\big\},
\end{equation*}
where $1\leq p\leq \infty$.

We consider the following norms
\begin{equation*}
\norm{\psi}_{\infty}=\sup\limits_{\frac{y}{\lambda}\in \mathbb{S}^2_{\lambda}} \big\vert\psi\left(\frac{y}{\lambda}\right)\big\vert, \textrm{ } \norm{\psi}_{\ast}=\sup\limits_{\frac{y}{\lambda}\in \mathbb{S}^2_{\lambda}}\left(\sum\limits_{j=1}^4 \left(1+\frac{|\Pi_{\xi_j}(y)|}{\lambda}\right)^{-3}+\lambda^2\right)^{-1}|\psi\left(\frac{y}{\lambda}\right)|.
\end{equation*}

Let
\begin{equation*}
\mathcal{C}=\big\{u\in L^{\infty}(\mathbb{S}^2_{\lambda})\rvert u\left(\frac{y}{\lambda}\right)=u\left(\frac{Ty}{\lambda}\right),  \textrm{ for all }T\in T_d \textrm{ and } \norm{u}_{\ast}<\infty \big\}.
\end{equation*}

Let
\begin{equation*}
\mathcal{C}_{\ast}=\big\{u\in L^{\infty}(\mathbb{S}^2_{\lambda})\rvert u\left(\frac{y}{\lambda}\right)=u\left(\frac{Ty}{\lambda}\right),  \textrm{ for all }T\in T_d, \norm{u}_{\ast}<\infty \textrm{ and }u\perp \varphi_{0,j}\chi_{R_1,j}\big\}.
\end{equation*}

 Given $h\in \mathcal{C}$, we consider the linear problem of finding a function $\phi\in \mathcal{C}_{\ast}$ and scalars $c_j$, $j=1,2,3,4$ such that
\begin{equation}\label{eq:linearp1}
L(\phi)=h+\sum\limits_{j=1}^4 c_j \chi_{R_1,j}\varphi_{0,j} \textrm{ in }\mathbb{S}^2_{\lambda}.
\end{equation}
We observe that the orthogonality condition in the problem above is only taken with respect the approximate kernel due to the dilations. Furthermore, we can easily find that the elements in $\mathcal{C}_{\ast}$ are also perpendicular to the approximate kernels that are generated by translations, i.e.
\begin{equation*}
u\perp \varphi_{i,j}\chi_{R_1,j}, \textrm{ for all }i=0,1,2 \textrm{ and }j=1,2,3,4, \textrm{  }u\in \mathcal{C}_{\ast}.
\end{equation*}

Our main result in this section states its bounded solvability, uniform in small $\lambda$ in our functional settings of the enlarged sphere $\mathbb{S}^2_{\lambda}$.
\begin{proposition}\label{prop:solvability}
There exist a positive number $\lambda_0$ and a $C$, such that for any $\lambda\in(0,\lambda_0)$, there is a unique solution to the problem (\ref{eq:linearp1}). Moreover, if $h\in C^{\alpha}(\mathbb{S}^2_{\lambda})$ then 
\begin{equation}\label{eq:apriori}
\norm{\phi}_{\infty}\leq C \norm{h}_{\ast}.
\end{equation}
\end{proposition}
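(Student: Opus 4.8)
The plan is to establish the a priori estimate (\ref{eq:apriori}) first and then deduce existence and uniqueness from it. I would prove (\ref{eq:apriori}) by contradiction: assuming it fails, there are sequences $\lambda_n\to 0$, data $h_n\in\mathcal{C}$, solutions $\phi_n\in\mathcal{C}_\ast$ with associated multipliers $c_{j,n}$ solving (\ref{eq:linearp1}), normalized so that $\norm{\phi_n}_\infty=1$ while $\norm{h_n}_\ast\to 0$. The goal is to reach the contradiction $\norm{\phi_n}_\infty\to 0$.

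First I would dispose of the multipliers $c_{j,n}$, where the tetrahedral symmetry is what makes things tractable: because every element of $\mathcal{C}_\ast$ is $T_d$-invariant and $T_d\cong S_4$ acts transitively on the four radial modes $\varphi_{0,j}\chi_{R_1,j}$, one gets $c_{1,n}=\cdots=c_{4,n}=:c_n$. Pairing (\ref{eq:linearp1}) with $\chi_{R_1,1}\varphi_{0,1}$, using that the four cut-off modes have disjoint supports for small $\lambda$, that $\phi_n\perp\varphi_{0,1}\chi_{R_1,1}$, and that $L(\varphi_{0,1}\chi_{R_1,1})$ is small in $L^2$, yields $|c_n|\le C\norm{h_n}_\ast + o(1)\norm{\phi_n}_\infty$, so $c_n\to 0$. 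Next comes the blow-up analysis at each vertex: fixing $j$ and setting $\tilde\phi_n(z)=\phi_n(\Pi^{-1}_{\xi_j}(\lambda_n z))$, the scaled equations converge, in the sense of (\ref{eq:linearlim}), to the limiting operator $\tilde L$, and $C^{2,\alpha}_{\mathrm{loc}}$ elliptic estimates give, along a subsequence, $\tilde\phi_n\to\phi_\infty$ on compact sets, where $\tilde L(\phi_\infty)=0$ in $\R^2$ and $\phi_\infty$ is bounded. By the nondegeneracy result of Baraket and Pacard \cite{Baraket1997}, $\phi_\infty$ is a combination of $\varphi_0,\varphi_1,\varphi_2$. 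The orthogonality encoded in $\mathcal{C}_\ast$—which, thanks to the $T_d$-symmetry, holds against all three modes $\varphi_{i,j}\chi_{R_1,j}$, $i=0,1,2$—passes to the limit and forces all three coefficients to vanish, so $\phi_\infty\equiv 0$. Hence $\phi_n\to 0$ uniformly on each fixed ball $\{|z|\le R\}$ about every $\xi_j$.

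It then remains to exclude concentration in the region $\Omega_n=\{\,|\Pi_{\xi_j}(y)|/\lambda_n\ge R\ \text{for all }j\,\}$ outside the bubble cores. There the potential in $L$ is $O(\lambda_n^4)$ by (\ref{eq:expouter}), so $L$ is a small perturbation of $\lambda_n^2\Delta_g$, and (for $R>R_1+1$) the multiplier term drops out, leaving $|L(\phi_n)|\le\norm{h_n}_\ast\,\varpi$ on $\Omega_n$, where $\varpi$ is the weight defining $\norm{\cdot}_\ast$. I would build a positive supersolution $\Phi_n$ adapted to $\varpi$—comparing against a multiple of $\sum_j(1+|\Pi_{\xi_j}(y)|/\lambda_n)^{-1}$ plus a constant times $\lambda_n$—so that $L(\Phi_n)\le -\norm{h_n}_\ast\varpi$ on $\Omega_n$ and $\Phi_n\ge|\phi_n|$ on the inner boundary $\{|z|=R\}$, where $|\phi_n|=o(1)$ by the previous step. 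The maximum principle then gives $\sup_{\Omega_n}|\phi_n|\to 0$. Combined with the inner bound, this yields $\norm{\phi_n}_\infty\to 0$, the desired contradiction, and hence (\ref{eq:apriori}).

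Given the a priori estimate, uniqueness is immediate, since the difference of two solutions solves the homogeneous problem and (\ref{eq:apriori}) forces it to vanish. For existence I would view (\ref{eq:linearp1}), together with the orthogonality constraints and the single effective unknown $c=c_1=\cdots=c_4$, as an equation in the Hilbert space of $T_d$-symmetric $H^1$ functions orthogonal to the approximate kernel. Writing $L=\lambda^2\Delta_g+K$ with $K$ the multiplication by the concentrated potential, $K$ is compact relative to the invertible operator $\lambda^2\Delta_g$ on mean-zero functions, so the problem is Fredholm of index zero; injectivity from (\ref{eq:apriori}) then gives surjectivity and unique solvability. I expect the outer-region estimate to be the main obstacle: one must choose the barrier exponents and the matching on $\{|z|=R\}$ so that the slow ($-3$ power) decay of $\varpi$ and its $\lambda^2$ floor are simultaneously dominated by a genuine supersolution of the nearly degenerate operator $\lambda^2\Delta_g$. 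The structural feature that makes the whole reduction close is again the $T_d$-symmetry, which collapses the four dilation multipliers to one and automatically annihilates the translation modes, leaving only the single dilation direction per bubble to be handled by the orthogonality condition.
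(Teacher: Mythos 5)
Your overall strategy --- contradiction with $\norm{\phi_n}_\infty=1$, symmetry reduction of the four multipliers to one, vertex blow-up combined with the Baraket--Pacard nondegeneracy, a barrier/maximum-principle argument in the outer region, and Fredholm theory for existence --- is the same as the paper's (which organizes it as Lemma \ref{lm:apriori} plus a separate multiplier estimate). However, two of your technical devices fail as stated. The first is the multiplier estimate: you test with $\chi_{R_1,1}\varphi_{0,1}$ and claim $L(\chi_{R_1,1}\varphi_{0,1})$ is small in $L^2$. It is not, precisely because the dilation mode does not decay: $\varphi_0(z)=\frac{2(|z|^2-1)}{1+|z|^2}\to 2$ as $|z|\to\infty$, so the cut-off terms $\varphi_{0}\,\Delta_z\chi_{R_1}+2\nabla_z\chi_{R_1}\cdot\nabla_z\varphi_{0}$ are of size $O(1)$ on the annulus $\{R_1\le|z|\le R_1+1\}$, giving $\norm{L(\chi_{R_1}\varphi_{0})}_{L^2}\sim\sqrt{R_1}$ uniformly in $\lambda$. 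The pairing with $\phi_n$ is then only $O(R_1)\norm{\phi_n}_\infty$, and since $\int\chi_{R_1}|\varphi_0|^2\sim 4\pi R_1^2$ you obtain $|c_n|\le CR_1^{-1}\norm{\phi_n}_\infty+C\norm{h_n}_\ast$, not $o(1)\norm{\phi_n}_\infty$; worse, $\norm{c_n\chi_{R_1,j}\varphi_{0,j}}_\ast\sim|c_n|R_1^{3}$, so the right-hand side of your equation is not small in the $\ast$-norm, and the limit problem at each vertex is $\tilde{L}(\phi_\infty)=c_\infty\chi_{R_1}\varphi_0$ rather than the homogeneous one. (This can be repaired --- pair the limit equation with $\varphi_0$, use $\tilde{L}(\varphi_0)=0$ and a decay analysis of $\nabla\phi_\infty$ to force $c_\infty=0$ --- but that is a genuine missing step.) The paper's proof is built exactly around this non-decay: it tests with $\eta_{R_3,\xi_j}\varphi_{0,j}$, whose cut-off sits at a radius $R_3$ fixed on the \emph{original} sphere, i.e. at radius $R_3/\lambda\to\infty$ in the blown-up variable, so that $\nabla_z\eta=O(\lambda)$ and $\Delta_z\eta=O(\lambda^2)$, giving $|\langle\phi,L(\eta_{R_3,\xi_j}\varphi_{0,j})\rangle|\le C\lambda\norm{\phi}_\infty$ and hence $|c_j|\le C\norm{h}_\ast$ directly.

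The second gap is the sign of your outer barrier. In two dimensions $\Delta_z|z|^{-1}=|z|^{-3}>0$, so $\sum_j\bigl(1+|\Pi_{\xi_j}(y)|/\lambda_n\bigr)^{-1}$ is \emph{subharmonic} in the outer region, and adding a constant changes nothing; your $\Phi_n$ satisfies $L(\Phi_n)>0$ to leading order, whereas a supersolution must satisfy $L(\Phi_n)\le-\norm{h_n}_\ast\,\varpi<0$, with $\varpi$ the weight defining $\norm{\cdot}_\ast$. (In $\mathbb{R}^3$ the function $r^{-1}$ is harmonic; in $\mathbb{R}^2$ the usable barriers are of the form ``constant minus $r^{-1}$''.) This is what the paper's construction supplies: the functions $\tilde{g}_j$ of (\ref{eq:barrier1}), solving $-\Delta\tilde{g}_j=2|z_{\xi_j}|^{-3}+2\lambda^2$ outside the cores with zero inner boundary data, which are bounded and increasing outward (note they are $O(1)$, not $O(\lambda)$: with right-hand side $\lambda^2$ on a region of diameter $\sim\lambda^{-1}$ one cannot do better), together with the positive bounded function $\tilde{V}=\sum_j f_0(a|z_{\xi_j}|)$ built from the exact kernel element $f_0(r)=\frac{r^2-1}{r^2+1}$, dilated by a small $a$ so that $-\Delta\tilde{V}$ dominates the potential term for $|z_{\xi_j}|\ge 10/a$. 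That function $\tilde{V}$ is also what legitimizes your appeal to ``the maximum principle'' for $L$ in the outer region: $L$ has a positive zeroth-order coefficient there (however small), so the maximum principle is not automatic and must be earned through such a positive supersolution. The remainder of your outline (symmetry forcing $c_1=\cdots=c_4$, automatic orthogonality to the translation modes, uniqueness from the a priori bound, Fredholm alternative for existence) is correct and agrees with the paper.
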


The proof of this result consists of two steps. The first step is to establish an uniform a priori estimate for the problem (\ref{eq:linearp1}) under the additional orthogonality conditions of $\phi$ generated by translations. More precisely, we consider the problem
\begin{equation}\label{eq:linearp2}
L(\phi)=h \textrm{ in }\mathbb{S}^2_{\lambda},
\end{equation}
\begin{equation}\label{eq:linearp3}
\int_{\mathbb{S}^2_{\lambda}}\chi_{R_1,j}\varphi_{i,j}\phi=0 \textrm{ for all }i=0,1,2, j=1,2,3,4.
\end{equation} 
\begin{lemma}\label{lm:apriori}
Assume that $h\in C^{\alpha}(\mathbb{S}^2_{\lambda})$. Then there exist positive number $\lambda_0$ and $C$, such that for any $\lambda\in(0,\lambda_0)$ and any solution to (\ref{eq:linearp2})-(\ref{eq:linearp3}), one has
\begin{equation*}
\norm{\phi}_{\infty}\leq C \norm{h}_{\ast}.
\end{equation*}
\end{lemma}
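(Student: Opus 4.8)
The plan is to argue by contradiction through a blow-up (compactness) scheme, the standard mechanism that converts the nondegeneracy of the Liouville bubble into a uniform linear estimate. Suppose the asserted bound fails. Then there are sequences $\lambda_n\to 0$, data $h_n\in C^{\alpha}(\mathbb{S}^2_{\lambda_n})$, and solutions $\phi_n$ of (\ref{eq:linearp2})--(\ref{eq:linearp3}) normalized so that $\norm{\phi_n}_{\infty}=1$ while $\norm{h_n}_{\ast}\to 0$. The goal is then to show $\norm{\phi_n}_{\infty}\to 0$, which is the desired contradiction. I would split $\mathbb{S}^2_{\lambda_n}$ into the four ``inner'' regions around the (scaled) blow-up points and the complementary ``outer'' region, and estimate $\phi_n$ on each.

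For the inner analysis, near each $\xi_j$ I pass to the scaled coordinate $z=\Pi_{\xi_j}(y)/\lambda_n$ and work on a fixed ball $\abs{z}\le R$. By (\ref{eq:expinner}) the coefficient $\lambda_n^2\,\frac{\rho}{\int e^{w_{\lambda_n}}}e^{w_{\lambda_n}}$ appearing in $L$ converges in $C^0_{loc}$ to $\frac{8}{(1+\abs{z}^2)^2}$, so $L$ converges to the operator $\tilde L$ of (\ref{eq:linearlim}). Since $\norm{\phi_n}_{\infty}=1$ and $\norm{h_n}_{\ast}\to 0$, interior elliptic (Schauder) estimates give, along a subsequence, $\phi_n\to\phi^{(j)}$ in $C^1_{loc}(\mathbb{R}^2)$ with $\phi^{(j)}$ bounded and $\tilde L(\phi^{(j)})=0$. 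By the nondegeneracy result of Baraket and Pacard \cite{Baraket1997}, $\phi^{(j)}$ is a linear combination of $\varphi_0,\varphi_1,\varphi_2$; and passing the orthogonality relations (\ref{eq:linearp3}) to the limit (the cut-offs $\chi_{R_1,j}$ localize them in the fixed ball) forces all three coefficients to vanish, so $\phi^{(j)}\equiv 0$. Thus $\sup_{\abs{z}\le R}\abs{\phi_n}\to 0$ for every fixed $R$ and each $j$; the tetrahedral symmetry identifies the four points, so this need only be verified at one of them.

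The outer analysis is where I expect the real difficulty. It remains to control $\phi_n$ on $\Omega_n=\{y:\abs{\Pi_{\xi_j}(y)}/\lambda_n\ge R\ \text{for all }j\}$, on whose inner boundary $\sup\abs{\phi_n}\to 0$ by the previous step and where the potential of $L$ is small. The plan is a maximum-principle comparison with a bounded, strictly superharmonic barrier of the form $\Psi_n=C_0-\sum_j\bigl(1+\abs{\Pi_{\xi_j}(y)}/\lambda_n\bigr)^{-1}$ (plus lower-order corrections), chosen so that $-L(\Psi_n)$ dominates the decaying part $\sum_j(1+\abs{\Pi_{\xi_j}(y)}/\lambda_n)^{-3}$ of the weight defining $\norm{\cdot}_{\ast}$; this is possible because the potential decays faster, like $(1+\abs{\Pi_{\xi_j}(y)}/\lambda_n)^{-4}$ by (\ref{eq:exp}), so its contribution is absorbed. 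Comparing $\pm\phi_n$ against $\bigl(C\norm{h_n}_{\ast}+\sup_{\partial\Omega_n}\abs{\phi_n}\bigr)\Psi_n$ then controls the decaying component of $\phi_n$. The genuinely delicate point is the slowly varying (near-constant) mode, which no bounded superharmonic barrier can dominate on the compact sphere $\mathbb{S}^2_{\lambda_n}$ -- since integrating $L$ against a constant leaves only the small term $\int\lambda_n^2\frac{\rho}{\int e^{w_{\lambda_n}}}e^{w_{\lambda_n}}$ -- and which also carries the $\lambda^2$ part of the weight. This is exactly where the orthogonality to the dilation kernels $\varphi_{0,j}$ in (\ref{eq:linearp3}) is essential: because $\int\varphi_0\chi_{R_1}\neq 0$, the condition $\int_{\mathbb{S}^2_{\lambda_n}}\chi_{R_1,j}\varphi_{0,j}\phi_n=0$ pins the near-constant part of $\phi_n$ and forces it to $0$ as well.

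Combining the inner and outer estimates yields $\norm{\phi_n}_{\infty}\to 0$, contradicting the normalization and proving the lemma. I expect the construction of the barrier $\Psi_n$ and the precise handling of the neutral mode through the $\varphi_{0,j}$-orthogonality to be the technical heart of the argument, whereas the inner blow-up step is routine once the nondegeneracy of \cite{Baraket1997} is invoked. A final bookkeeping point is that all functions live in the $T_d$-symmetric spaces, so the comparison and the limiting profiles can be carried out at a single representative blow-up point and then transported by the symmetry; this also explains why the translation kernels $\varphi_{1,j},\varphi_{2,j}$ cause no additional trouble.
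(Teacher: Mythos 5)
Your overall architecture is the same as the paper's: argue by contradiction, handle the four inner regions by rescaling, interior elliptic estimates, the Baraket--Pacard classification and the localized orthogonality conditions (\ref{eq:linearp3}), and handle the outer region by a maximum principle with explicit barriers. The paper merely runs the two halves in the opposite order: it first proves the intermediate claim $\norm{\phi}_{\infty}\leq C\left[\norm{\phi}_i+\norm{h}_{\ast}\right]$ with the inner norm (\ref{eq:innernorm}), and then gets the contradiction from a nontrivial inner blow-up limit; your inner step is the same ingredient used contrapositively, and it is correct as far as it goes (the Gram matrix of $\varphi_0,\varphi_1,\varphi_2$ against the cut-off is diagonal and nondegenerate, so all three coefficients die).

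The genuine gap is in your outer step, exactly at the point you flag as delicate. The orthogonality conditions (\ref{eq:linearp3}) cannot ``pin the near-constant part of $\phi_n$'': because of the cut-offs $\chi_{R_1,j}$ they are supported in the fixed balls $\{\abs{\Pi_{\xi_j}(y)}/\lambda\leq R_1+1\}$, so they constrain only the inner behavior of $\phi_n$ and say nothing about its values in the outer region. Concretely, a $T_d$-symmetric function that vanishes identically on $\{\abs{\Pi_{\xi_j}(y)}/\lambda\leq R_1+2\}$, equals a constant $c\neq 0$ outside $\{\abs{\Pi_{\xi_j}(y)}\leq\delta\}$, and interpolates by the harmonic profile $c\,\ln\left(\abs{z_{\xi_j}}/R_1\right)/\ln\left(\delta/(\lambda R_1)\right)$ in between, satisfies (\ref{eq:linearp3}) exactly; it is precisely the ``plateau'' scenario you must exclude, and orthogonality is silent about it. What replaces this in the paper is a barrier construction, not orthogonality: besides the positive supersolution $\tilde V$ of (\ref{eq:supersolution}) (which gives the maximum principle on the exterior region), the paper introduces the functions $\tilde g_j$ of (\ref{eq:barrier1}), solving $-\Delta\tilde g_j=2\abs{z_{\xi_j}}^{-3}+2\lambda^2$ with zero Dirichlet data on the complement of caps of \emph{fixed} size $R_2$ on the original sphere (radius $R_2/\lambda$ in the dilated variable $z$); rescaled back to $\mathbb{S}^2$ this becomes (\ref{eq:barrieror}), a problem on a fixed domain with uniformly $L^2$-bounded right-hand side, so $\norm{\tilde g_j}_{\infty}\leq C$ uniformly in $\lambda$, and the combined barrier (\ref{eq:barrier2}) dominates both the decaying and the $\lambda^2$ parts of the weight --- something your $\Psi_n$ cannot do, since at points where all $\abs{z_{\xi_j}}\sim\lambda^{-1}$ one has $-L\Psi_n=O(\lambda^3)\ll\lambda^2$. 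Note that your integration obstruction (``no bounded supersolution dominates the constant mode'') is valid on the closed sphere and also on the complement of balls of fixed radius in $z$ --- which is the region your fixed-$R$ inner analysis hands you --- but not on the complement of the fixed caps; the choice of where to cut is the whole point. Be aware, finally, that this choice has a price which your proposal also inherits if you adopt it: the ``inner'' region then consists of the huge balls $\abs{z_{\xi_j}}\leq R_2/\lambda$, on which local convergence on compact sets of $z$ does not by itself control the supremum (the near-constant mode can live at $\abs{z}\to\infty$ inside these balls). So your instinct that the neutral mode is the technical heart of the lemma is correct; but the cure has to come from the barrier geometry and, at the level of the nonlinear scheme, from the extra constant $c_0$ and the orthogonality $\phi\perp e^{w_{\lambda}}$ of Corollary \ref{cor:linear}, not from (\ref{eq:linearp3}).
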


\begin{proof}
We will adopt the same technique introduced by del Pino, Kowalczyk and Musso in their paper \cite{delPino2005} to prove the invertibility of the linearized operator of the mean field equation in bounded domain but with Dirichlet boundary condition.

We prove this lemma by contradiction. We assume that there exist sequences $\lambda_n\rightarrow 0$, $h_n$ with $\norm{h_n}_{\ast}\rightarrow 0$ and $\norm{\phi_n}_{\infty}=1$ such that
\begin{equation}\label{eq:lincon1}
L(\phi_n)=h_n \textrm{ in }\mathbb{S}^2_{\lambda},
\end{equation}
\begin{equation}\label{eq:lincon2}
\int_{\mathbb{S}^2_{\lambda_n}}\chi_{R_1,j}\varphi_{i,j}\phi_n=0 \textrm{ for all }i=0,1,2, j=1,2,3,4.
\end{equation}

The contradiction is obtained via several major steps. The key step is to construct a positive supersolution in order to show that the operator $L$ satisfies the maximum principle in $\mathbb{S}^2_{\lambda}$ outside large geodesic balls centered at the points $\xi^{'}_j=\frac{\xi_j}{\lambda}$. Let us introduce the radial solution $f_0(r)=\frac{r^2-1}{r^2+1}$ in $\mathbb{R}^2$ of 
\begin{equation*}
\Delta f_0+\frac{8}{(1+r^2)^2}f_0=0.
\end{equation*}
 
We are ready to define a comparison function in $\mathbb{S}^2_{\lambda}$,
\begin{equation}\label{eq:supersolution}
\tilde{V}\left(\frac{y}{\lambda}\right)=\sum\limits_{j=1}^4 f_0\left(a\frac{|\Pi_{\xi_j}(y)|}{\lambda}\right)
\end{equation}
for $\frac{y}{\lambda}\in \mathbb{S}^2_{\lambda}$. Now let us denote $z_{\xi_j}=\frac{x_{\xi_j}}{\lambda}=\frac{\Pi_{\xi_j}(y)}{\lambda}$ for convenience.

We observe that
\begin{equation}\label{eq:superlap}
-\Delta \tilde{V}=\sum\limits_{j=1}^4 \frac{8a^2(a^2|z_{\xi_j}|^2-1)}{(1+a^2|z_{\xi_j}|^2)^3}\cdot \frac{(1+|x_{\xi_j}|^2)^2}{4}.
\end{equation}
So that for $|z_{\xi_j}|>\frac{10}{a}$ for  all $j=1,2,3,4$,
\begin{equation}\label{eq:superveri}
-\Delta \tilde{V}\geq 2\sum\limits_{j=1}^4\frac{a^2}{(1+a^2|z_{\xi_j}|)^2}\cdot \frac{(1+|x_{\xi_j}|^2)^2}{4} \geq \sum\limits_{j=1}^4 \frac{a^{-2}}{|z_{\xi_j}|^4}\cdot \frac{(1+|x_{\xi_j}|^2)^2}{4}.
\end{equation}

On the other hand, in the same region,
\begin{equation}\label{eq:superverimul}
e^{w_{\lambda}}\tilde{V}\leq C\sum\limits_{j=1}^4 \frac{1}{|z_{\xi_j}|^4}\cdot \frac{(1+|x_{\xi_j}|^2)^2}{4}.
\end{equation}
Hence if $a$ is taken small and fixed, and $R'_{2}=\frac{R_2}{\lambda}>0$ is chosen sufficiently large depending on the choice of this $a$, then we have $L(\tilde{V})<0$ in $\tilde{\mathbb{S}}^2_{\lambda}:=\lambda^{-1}\left(\mathbb{S}^2\setminus \cup^{4}_{j=1} \Pi_{\xi_j}(B(0,R_2)) \right)$. Here we are able to find a positive supersolution $\tilde{V}$ on $\tilde{\mathbb{S}}^2_{\lambda}$. Then we conclude that the operator $L$ satisfies the Maximum principle, i.e. if $L(u)\leq 0$ in $\tilde{\mathbb{S}}^2_{\lambda}$ and $u\geq 0$ on $\partial\tilde{\mathbb{S}}^2_{\lambda}$, then $u\geq 0$ in $\tilde{\mathbb{S}}^2_{\lambda}$. 

Let us fix $R_2>0$. Now let us consider the ``inner norm"
\begin{equation}\label{eq:innernorm}
\norm{\phi}_i=\sup\limits_{\bigcup\limits_{j=1}^4 \lambda^{-1}\left(\Pi_{\xi_j}(B(0,R_2))\right)}\abs{\phi}.
\end{equation}

Then the second step in this proof is to show the following claim is true: there is a constant $C$ such that if $L(\phi)=h$ in $\mathbb{S}^2_{\lambda}$ then
\begin{equation}\label{eq:innerapriori}
\norm{\phi}_{\infty}\leq C \left[\norm{\phi}_i+\norm{h}_{\ast}\right].
\end{equation}
We will need suitable barrier functions to prove the above claim.

Let $\tilde{g}_j$ be the solution of the problem
\begin{equation}\label{eq:barrier1}
-\Delta \tilde{g}_j = \frac{2}{|z_{\xi_j}|^3}+2\lambda^2 \textrm{ in }\lambda^{-1}\left(\mathbb{S}^2\setminus \Pi_{\xi_j}(B(0,R_2))\right),
\end{equation}
\begin{equation*}
\tilde{g}_j=0 \textrm{ on }\partial\left(\lambda^{-1}\Pi_{\xi_j}(B(0,R_2))\right),
\end{equation*}
for $j=1,2,3,4$.

Abuse the notation a little bit, we have
\begin{equation}\label{eq:barrieror}
-\Delta_g \tilde{g}_j =\frac{2\lambda}{|x_{\xi_j}|^3}+2 \textrm{ in } \mathbb{S}^2\setminus \Pi_{\xi_j}(B(0,R_2)),
\end{equation}
\begin{equation*}
\tilde{g}_j=0 \textrm{ on }\partial \Pi_{\xi_j}(B(0,R_2)),
\end{equation*}
in the original isothermal coordinates $x_{\xi_j}$.

By the elliptic regularity estimates, we have
\begin{equation*}
\norm{\tilde{g}_j}_{H^2(\mathbb{S}^2\setminus \Pi_{\xi_j}(B(0,R_2)))}\leq C \norm{2\lambda|x_{\xi_j}|^{-3}+2}_{L^2(\mathbb{S}\setminus \Pi_{\xi_j}(B(0,R_2)))}\leq C(\lambda+1)\leq C.
\end{equation*}

Let us introduce our barrier
\begin{equation}\label{eq:barrier2}
\tilde{\phi}=2\norm{\phi}_i \tilde{V}+\norm{h}_{\ast}\sum\limits_{j=1}^2 \tilde{g}_j.
\end{equation}
Then, it is easy to check that $L(\tilde{\phi})\leq h$ in $\tilde{\mathbb{S}}^2_{\lambda}$ and $\tilde{\phi}\geq \phi$ on $\partial\tilde{\mathbb{S}}^2_{\lambda}$. Hence, we have $\phi\leq \tilde{\phi}$ in $\tilde{\mathbb{S}}^2_{\lambda}$. Similarly, one can also show that $\phi\geq -\tilde{\phi}$ in $\tilde{\mathbb{S}}^2_{\lambda}$ and the claim follows.

In the last step, we go back to the contradiction argument. The claim in the second step shows that since $\norm{\phi_n}_{\infty}=1$, then for some $\kappa>0$, we have $\norm{\phi_n}_i \geq \kappa$. Let us set $\hat{\phi}_n(z)=\phi_n\left(\frac{\Pi^{-1}_{\xi_j}(\lambda z)}{\lambda}\right)$ where the index $j$ is such that $\sup_{|z_{\xi_j}|<R'_2}|\phi_n|\geq \kappa$. Without loss of generality, we can assume this index $j$ is the same for all  $n$. Elliptic estimates readily imply that $\hat{\phi}_n$ converges uniformly over any compact subset to a bounded solution $\hat{\phi}\neq 0$ of a problem in $\mathbb{R}^2$
\begin{equation}\label{eq:limit}
\Delta \phi + \frac{8}{(1+|z|^2)^2}\phi=0.
\end{equation}

This implies that $\hat{\phi}$ is a linear combination of the functions $\varphi_{k}$, $k=0,1,2$. However, the orthogonal conditions that all $\phi_n$'s satisfy imply that $\hat{\phi}\equiv 0$. The result of the lemma then follows from the contradiction.
\end{proof}
We are now ready to provide a complete proof of our main result of this section.
\begin{proof}[Proof of Proposition 4.1]
We first establish the validity of the a priori estimate (\ref{eq:apriori}). Lemma \ref{lm:apriori} yields
\begin{equation}\label{eq:estprj}
\norm{\phi}_{\infty}\leq C \left[\norm{h}_{\ast}+\sum\limits_{j=1}^4 |c_j| \right],
\end{equation}
hence it suffices to estimate the values of the constants $|c_j|$. Let us consider the cut-off function $\eta_{R_3,\xi_j}$ introduced in (\ref{eq:cutoff1}) for some $R_3>0$. We abuse the notation a little bit to denote $\eta_{R_3,\xi_j}$ as a function on $\mathbb{S}^2_{\lambda}$. We multiply the equation (\ref{eq:linearp1}) by the test function $\varphi_{0,j}\eta_{R_3,\xi_j}$ and integrate
\begin{equation}\label{eq:test}
\langle L(\phi), \eta_{R_3,\xi_j}\varphi_{0,j}\rangle =\langle h, \eta_{R_3,\xi_j}\varphi_{0,j}\rangle+c_j \int_{\mathbb{S}^2_{\lambda}}\chi_{R_1,j}|\varphi_{0,j}|^2.
\end{equation}

On the other hand, we have
\begin{equation}\label{eq:adjoint}
\langle L(\phi), \eta_{R_3,\xi_j}\varphi_{0,j} \rangle =\langle \phi,L(\eta_{R_3,\xi_j}\varphi_{0,j})\rangle.
\end{equation}

Now, we have
\begin{equation*}
\frac{4}{(1+|x_{\xi_j}|^2)^2}L(\eta_{R_3,\xi_j}\varphi_{0,j})=\Delta_{z_{\xi_j}} \eta_{R_3,\xi_j} \varphi_{0,j}+2\nabla_{z_{\xi_j}} \eta_{R_3,\xi_j} \nabla_{z_{\xi_j}} \varphi_{0,j}
\end{equation*}
\begin{equation*}
+ \lambda \left[O\left(\frac{r}{(1+r^2)^2}\right)+O\left(\frac{1}{(1+r^2)^2}\right)\right],
\end{equation*}
with $r=|z_{\xi_j}|$. Since $\Delta_{z_{\xi_j}} \eta_{R_3,\xi_j}= O(\lambda^2)$, $\nabla_{z_{\xi_j}} \eta_{R_3,\xi_j}= O(\lambda)$ and $\varphi_{0,j}=O(1)$, $\nabla_{z_{\xi_j}} \varphi_{0,j}=O(r^{-3})$ , we have
\begin{equation}\label{eq:errperp}
\frac{4}{(1+|x_{\xi_j}|)^2}L(\eta_{R_3,\xi_j}\varphi_{0,j})= O(\lambda^2)+ \lambda \left[O\left(\frac{r}{(1+r^2)^2}\right)+O\left(\frac{1}{(1+r^2)^2}\right)\right].
\end{equation}

Therefore, we have
\begin{equation}\label{eq:estadj}
\abs{\langle \phi, L(\eta_{R_3,\xi_j}\varphi_{0,j}) \rangle}\leq C\lambda \norm{\phi}_{\infty}.
\end{equation}

Combining this above estimate with (\ref{eq:estprj}), (\ref{eq:test}) and (\ref{eq:adjoint}), we obtain
\begin{equation*}
|c_j|\leq C\left[\norm{h}_{\ast}+\lambda \sum\limits_{k=1}^4|c_k|\right].
\end{equation*}

It follows that $|c_j|\leq C\norm{h}_{\ast}$. Furthermore, from (\ref{eq:estprj}) we know that (\ref{eq:apriori}) is true.

It only remains to verify the solvability assertion. The Fredholm alternative tells us that the problem (\ref{eq:linearp1}) has a unique solution if and only the associated homogeneous problem has only trivial solution. The homogeneous problem is equivalent as the equation (\ref{eq:linearp1}) with $h=0$. From the a priori estimate we just prove, we know that the homegeneous problem only admits trivial solution. This finishes the proof.
\end{proof}
Furthermore, if we add another orthogonal condition to $\phi$ and consider the following problem:
\begin{equation}\label{eq:linrpra1}
L(\phi)=h+\sum\limits_{j=1}^4 c_j \chi_{R_1,\xi_j}\varphi_{0,j} + c_0 \textrm{ in } \mathbb{S}^2_{\lambda},
\end{equation}
\begin{equation}\label{eq:linrpra2}
\phi\perp \varphi_{0,j}\chi_{R_1,j},
\end{equation}
\begin{equation}\label{eq:linrpra3}
\phi\perp e^{w_{\lambda}},
\end{equation}
where $h\in \mathcal{C}$, we have the following corollary:
\begin{corollary}\label{cor:linear} 
Assume that the conditions in Proposition \ref{prop:solvability} hold. The problem  (\ref{eq:linrpra1})-(\ref{eq:linrpra3}) has a unique solution. Moreover, if $h\in C^{\alpha}(\mathbb{S}^2_{\lambda})$ then
\begin{equation*}
\norm{\phi}_{\infty}\leq C\norm{h}_{\ast}.
\end{equation*}
\end{corollary}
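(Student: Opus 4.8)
The plan is to deduce this corollary from Proposition \ref{prop:solvability} by treating the new scalar $c_0$ as a single extra free parameter, tuned precisely to enforce the single extra orthogonality condition (\ref{eq:linrpra3}), so that the Fredholm index stays balanced. The linchpin is an integral identity: for any $\psi$, integrating $\mathcal{L}(\psi)$ over $\mathbb{S}^2$ and using $\int_{\mathbb{S}^2}\Delta_g\psi=0$ gives
\[
\int_{\mathbb{S}^2}\mathcal{L}(\psi)=\frac{\rho}{\int_{\mathbb{S}^2}e^{w_{\lambda}}}\int_{\mathbb{S}^2}e^{w_{\lambda}}\psi .
\]
In particular the condition $\psi\perp e^{w_{\lambda}}$ is equivalent to $\int_{\mathbb{S}^2}\mathcal{L}(\psi)=0$; this is exactly what ties the added constant to the added orthogonality.

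Next I would set up the linear decomposition. Applying Proposition \ref{prop:solvability} to the datum $h\in\mathcal{C}$ produces $(\phi_0,c_1^0,\dots,c_4^0)$ with $\phi_0\in\mathcal{C}_{\ast}$, while applying it to the constant function $1\in\mathcal{C}$ produces $(\psi,\tilde c_1,\dots,\tilde c_4)$ with $\psi\in\mathcal{C}_{\ast}$. By linearity and the uniqueness in Proposition \ref{prop:solvability}, for every $c_0$ the function $\phi:=\phi_0+c_0\psi$ together with $c_j:=c_j^0+c_0\tilde c_j$ solves (\ref{eq:linrpra1})--(\ref{eq:linrpra2}). It then remains only to pick $c_0$ so that (\ref{eq:linrpra3}) holds, i.e. so that $\int_{\mathbb{S}^2}e^{w_{\lambda}}\phi_0+c_0\int_{\mathbb{S}^2}e^{w_{\lambda}}\psi=0$, which determines $c_0$ uniquely provided $\int_{\mathbb{S}^2}e^{w_{\lambda}}\psi\neq 0$.

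The hard part, and the only genuine obstacle, is to establish this non-degeneracy together with a lower bound on its size. Here I would feed $\psi$ into the identity above: since $\mathcal{L}(\psi)=\lambda^{-2}L(\psi)=\lambda^{-2}\big(1+\sum_j\tilde c_j\chi_{R_1,j}\varphi_{0,j}\big)$, integration yields
\[
\int_{\mathbb{S}^2}\mathcal{L}(\psi)=\lambda^{-2}\Big(4\pi+\sum_{j=1}^4\tilde c_j\int_{\mathbb{S}^2}\chi_{R_1,j}\varphi_{0,j}\Big).
\]
The leading term $4\pi\lambda^{-2}$ is just the area of $\mathbb{S}^2$ divided by $\lambda^2$; the corrections are lower order, because each $\int_{\mathbb{S}^2}\chi_{R_1,j}\varphi_{0,j}=O(\lambda^2)$ (the cutoff confines the integral to a ball of radius $\sim\lambda R_1$), while the testing argument from the proof of Proposition \ref{prop:solvability} — multiplying by $\eta_{R_3,\xi_j}\varphi_{0,j}$ and using (\ref{eq:estadj}) together with the a priori bound $\norm{\psi}_{\infty}\le C\norm{1}_{\ast}\le C\lambda^{-2}$ — gives $|\tilde c_j|\le C\lambda^{-1}$. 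Hence $\int_{\mathbb{S}^2}\mathcal{L}(\psi)=4\pi\lambda^{-2}(1+O(\lambda))$, and since $\int_{\mathbb{S}^2}e^{w_{\lambda}}=32\pi+o(1)$ by (\ref{eq:expint}) and $\rho=32\pi+o(1)$, the identity gives $\int_{\mathbb{S}^2}e^{w_{\lambda}}\psi=4\pi\lambda^{-2}(1+o(1))\neq 0$ for $\lambda$ small.

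With non-degeneracy secured the rest is bookkeeping. I would set $c_0=-\int_{\mathbb{S}^2}e^{w_{\lambda}}\phi_0\big/\int_{\mathbb{S}^2}e^{w_{\lambda}}\psi$ and bound the numerator by $|\int_{\mathbb{S}^2}e^{w_{\lambda}}\phi_0|\le\norm{\phi_0}_{\infty}\int_{\mathbb{S}^2}e^{w_{\lambda}}\le C\norm{h}_{\ast}$ against the denominator of size $4\pi\lambda^{-2}$, obtaining $|c_0|\le C\lambda^2\norm{h}_{\ast}$. Then $\norm{\phi}_{\infty}\le\norm{\phi_0}_{\infty}+|c_0|\norm{\psi}_{\infty}\le C\norm{h}_{\ast}+C\lambda^2\norm{h}_{\ast}\cdot C\lambda^{-2}=C\norm{h}_{\ast}$, which is the claimed estimate, and the scalars obey $|c_j|\le C\norm{h}_{\ast}$ by the same arithmetic. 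Uniqueness follows by applying this to the homogeneous problem: if $h=0$ then $\phi_0=0$, so the formula forces $c_0=0$, whence $\phi\equiv 0$ and all $c_j=0$. This simultaneously yields existence (the explicit construction), uniqueness, and the bound, completing the proof.
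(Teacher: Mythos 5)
Your scheme --- superpose the Proposition \ref{prop:solvability} solutions for the data $h$ and $1$, then tune the single scalar $c_0$ to enforce (\ref{eq:linrpra3}) --- is a legitimate strategy, and your reduction of everything to one non-degeneracy statement, a lower bound $\abs{\int_{\mathbb{S}^2}e^{w_{\lambda}}\psi}\geq c\lambda^{-2}$, is correct: the bookkeeping and the uniqueness argument downstream would then go through. But the proof of that non-degeneracy, which you yourself call the only genuine obstacle, has a real gap: the bound $\abs{\tilde c_j}\leq C\lambda^{-1}$ is false. Testing $L(\psi)=1+\sum_k\tilde c_k\chi_{R_1,k}\varphi_{0,k}$ against $\eta_{R_3,\xi_j}\varphi_{0,j}$ and using (\ref{eq:adjoint}) gives
\[
\langle \psi, L(\eta_{R_3,\xi_j}\varphi_{0,j})\rangle=\langle 1,\eta_{R_3,\xi_j}\varphi_{0,j}\rangle+\tilde c_j\int_{\mathbb{S}^2_{\lambda}}\chi_{R_1,j}|\varphi_{0,j}|^2 ,
\]
and while (\ref{eq:estadj}) does control the left-hand side by $C\lambda\norm{\psi}_{\infty}\leq C\lambda^{-1}$, you have dropped the datum term $\langle 1,\eta_{R_3,\xi_j}\varphi_{0,j}\rangle$. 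That term is of order $\lambda^{-2}$, not lower order: the support of $\eta_{R_3,\xi_j}$ occupies an area of order $R_3^{2}\lambda^{-2}$ in $\mathbb{S}^2_{\lambda}$, on most of which $\varphi_{0,j}$ is close to $2$, so that in the coordinates $w=\Pi_{\xi_j}(y)$,
\[
\langle 1,\eta_{R_3,\xi_j}\varphi_{0,j}\rangle=\frac{8}{\lambda^{2}}\left(\int_{\mathbb{R}^2}\frac{\eta(|w|/R_3)}{(1+|w|^{2})^{2}}\,dw+o(1)\right).
\]
Consequently $\tilde c_j$ is genuinely of size $\lambda^{-2}$ (and negative); nothing better than $\abs{\tilde c_j}\leq C\norm{1}_{\ast}\sim\lambda^{-2}$ can come out of the machinery of Proposition \ref{prop:solvability}.

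This invalidates the asymptotics your argument rests on. Since $\int_{\mathbb{S}^2}\chi_{R_1,j}\varphi_{0,j}=\lambda^{2}\mathcal{A}$ with $\mathcal{A}$ as in Lemma \ref{lm:consts} (of size roughly $8\pi R_1^{2}$), the correction $\sum_j\tilde c_j\int_{\mathbb{S}^2}\chi_{R_1,j}\varphi_{0,j}$ in your key identity is of order one, not $o(1)$: to leading order it equals $-32\,\mathcal{A}\,c(R_3)/\mathcal{B}$, with $c(R_3)=\int_{\mathbb{R}^2}\eta(|w|/R_3)(1+|w|^2)^{-2}dw$ and $\mathcal{B}$ as in Lemma \ref{lm:conc}, which is comparable to --- and for $R_1$, $R_3$ large exceeds --- the main term $4\pi$, so that $4\pi+\sum_j\tilde c_j\int_{\mathbb{S}^2}\chi_{R_1,j}\varphi_{0,j}$ tends to $4\pi-16\pi<0$. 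Hence the claimed expansion $\int_{\mathbb{S}^2}e^{w_{\lambda}}\psi=4\pi\lambda^{-2}(1+o(1))$ is wrong; even its sign is wrong. Non-degeneracy can very likely still be salvaged, but only by actually computing these $R_1$-, $R_3$-dependent constants and verifying they stay away from cancelling $4\pi$ --- an argument entirely absent from your proposal, and it sits at the crux of your proof. For contrast, the paper bypasses $\psi$ altogether: it integrates (\ref{eq:linrpra1}) over $\mathbb{S}^2_{\lambda}$, where (\ref{eq:linrpra3}) forces $\int_{\mathbb{S}^2_{\lambda}}L(\phi)=0$, yielding the balance relation (\ref{eq:normpr}) that ties $c_0$ directly to $h$ and the $c_j$; coupling this with the testing identity (\ref{eq:testpr}) gives $\abs{c_0}/\lambda^{2}\leq C\norm{h}_{\ast}$, after which the estimate follows from Proposition \ref{prop:solvability} applied to the datum $h+c_0$.
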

\begin{proof}
Follow the same argument as in the proof of Proposition \ref{prop:solvability}, we test (\ref{eq:linrpra1}) with $\varphi_{0,j}\eta_{R_3,\xi_j}$:
\begin{equation}\label{eq:testpr}
\langle L(\phi), \eta_{R_3,\xi_j}\varphi_{0,j}\rangle =\langle h, \eta_{R_3,\xi_j}\varphi_{0,j}\rangle+c_j \int_{\mathbb{S}^2_{\lambda}}\chi_{R_1,j}|\varphi_{0,j}|^2+c_0\int_{\mathbb{S}^2_{\lambda}}\chi_{R_1,j}\varphi_{0,j}.
\end{equation}
Integrate (\ref{eq:linrpra1}), we have
\begin{equation}\label{eq:normpr}
\int_{\mathbb{S}^2_{\lambda}}h + \sum\limits_{j=1}^4 \int_{\mathbb{S}^2_{\lambda}}c_j \chi_{R_1,j}\varphi_{0,j}+\frac{4\pi c_0}{\lambda^2}=0.
\end{equation}

Combine (\ref{eq:testpr}) and (\ref{eq:normpr}) with Proposition \ref{prop:solvability}, we can obtain
\begin{equation*}
\frac{|c_0|}{\lambda^2}\leq C\norm{h}_{\ast}.
\end{equation*}
Hence, we have
\begin{equation*}
\norm{\phi}\leq C\left[\norm{h}_{\ast}+\frac{c_0}{\lambda^2}\right]\leq C\norm{h}_{\ast}.
\end{equation*}
\end{proof}

The result of Corollary \ref{cor:linear} implies that the unique solution $\phi=T(h)$ of the problem (\ref{eq:linrpra1})-(\ref{eq:linrpra3}) defines a continuous linear map from the Banach space $\mathcal{C}$ of all functions $h$ with certain symmetries such that $\norm{h}_{\ast}<\infty$ to $L^{\infty}_s(\mathbb{S}^2_{\lambda})$.

\section{Reduce to One Dimension}
\noindent In this section, we reduce the infinite dimensional problem of finding a $\phi$ such that
\begin{equation}\label{eq:nonlin}
S_{\rho}(w_{\lambda}+\phi)=0
\end{equation}
to a one-dimensional problem of finding appropriate scale $\lambda$ while $\rho$ is given.

We now expand $S_{\rho}(w_{\lambda}+\phi)$ as
\begin{equation}\label{eq:nonex}
S_{\rho}(w_{\lambda}+\phi)=S_{\rho}(w_{\lambda})+\mathcal{L}\left(\phi-\frac{\int_{\mathbb{S}^2} e^{w_{\lambda}}\phi}{\int_{\mathbb{S}^2}e^{w_{\lambda}}}\right)+ N(\phi),
\end{equation}
where 
\begin{equation}\label{eq:nonlinr}
N(\phi)=\left[\frac{\rho}{\int_{\mathbb{S}^2}e^{w_{\lambda}+\phi}}e^{\phi}-\frac{\rho}{\int_{\mathbb{S}^2}e^{w_{\lambda}}}-\left(\phi-\frac{\int_{\mathbb{S}^2} e^{w_{\lambda}}\phi}{\int_{\mathbb{S}^2}e^{w_{\lambda}}}\right)\right]e^{w_{\lambda}}.
\end{equation}

Since the left hand side of the equation (\ref{eq:nonlin}) is invariant if we add a constant to $\phi$, we can further assume that
\begin{equation*}
\int_{\mathbb{S}^2} e^{w_{\lambda}}\phi=0.
\end{equation*} 

We abuse the notation here to denote $\phi$ as a function in $\mathcal{C}_{\ast}$.  Moreover, we consider the problem (\ref{eq:nonlin}) in the dilated coordinates, i.e. $w_{\lambda}$, $S_{\rho}(w_{\lambda})$ and $N(\phi)$ are now considered to be functions on $\mathbb{S}^2_{\lambda}$.

To employ the reduction procedure, we shall solve the following nonlinear intermediate problem first
\begin{equation}\label{eq:nonlini1}
L(\phi)=-\lambda^2\left[S_{\rho}(w_{\lambda})+N(\phi)\right]+\sum\limits_{j=1}^4 c_j \chi_{R_1,j}\varphi_{0,j} +c_0\textrm{ in } \mathbb{S}^2_{\lambda},
\end{equation}
\begin{equation}\label{eq:nonlini2}
\phi\in \mathcal{C}_{\ast},
\end{equation}
\begin{equation}\label{eq:invconst}
\int_{\mathbb{S}^2_{\lambda}}e^{w_{\lambda}}\phi=0.
\end{equation}
We will use the solvability theory we have just established in the previous section to show the existence result of the problem (\ref{eq:nonlini1})-(\ref{eq:invconst}). We assume that the conditions in Proposition \ref{prop:solvability} hold.
\begin{lemma}\label{lm:contraction}
The problem (\ref{eq:nonlini1})-(\ref{eq:invconst}) has a unique solution $\phi$ which satisfies
\begin{equation*}
\norm{\phi}_{\infty}\leq C\lambda.
\end{equation*}
\end{lemma}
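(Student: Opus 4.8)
The plan is to recast the problem (\ref{eq:nonlini1})--(\ref{eq:invconst}) as a fixed-point equation and solve it by the contraction mapping principle, using the bounded linear solvability operator supplied by Corollary \ref{cor:linear}. Write $T$ for the map sending $h\in\mathcal{C}$ to the unique solution $\phi=T(h)$ of (\ref{eq:linrpra1})--(\ref{eq:linrpra3}); by Corollary \ref{cor:linear} it satisfies $\norm{T(h)}_{\infty}\leq C\norm{h}_{\ast}$. I would then define
\begin{equation*}
\mathcal{A}(\phi)=T\!\left(-\lambda^2\left[S_{\rho}(w_{\lambda})+N(\phi)\right]\right),
\end{equation*}
so that a fixed point $\phi=\mathcal{A}(\phi)$ is precisely a solution of (\ref{eq:nonlini1})--(\ref{eq:invconst}) for the scalars $c_0,c_1,\dots,c_4$ attached by $T$. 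One first checks that the argument of $T$ lies in $\mathcal{C}$: the $T_d$-invariance is inherited from the invariance of $S_{\rho}(w_{\lambda})$ (the last assertion of Lemma \ref{lm:error}) and from the fact that $N$ preserves the symmetry when $\phi\in\mathcal{C}_{\ast}$, while the normalization $\int_{\mathbb{S}^2_{\lambda}}e^{w_{\lambda}}\phi=0$ allows us to replace $\phi-\frac{\int e^{w_{\lambda}}\phi}{\int e^{w_{\lambda}}}$ by $\phi$ throughout (\ref{eq:nonlinr}).

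Second, I would collect the two estimates driving the scheme. For the error term, Lemma \ref{lm:error} together with a direct inspection of the weight defining $\norm{\cdot}_{\ast}$ gives
\begin{equation*}
\norm{\lambda^2 S_{\rho}(w_{\lambda})}_{\ast}\leq C\lambda^2\ln\tfrac{1}{\lambda}\leq C\lambda,
\end{equation*}
the dominant contribution coming from the inner term $\frac{\ln(1/\lambda)}{(1+|z|^2)^2}$ evaluated against the weight $\big(\sum_j(1+|z_{\xi_j}|)^{-3}+\lambda^2\big)^{-1}$, all other terms being of lower order. For the nonlinear part, the crucial structural fact is that $\lambda^2 e^{w_{\lambda}}\leq C\sum_{k=1}^4(1+|z_{\xi_k}|)^{-4}$ by (\ref{eq:expinner})--(\ref{eq:expouter}); since $(1+|z|)^{-4}\leq(1+|z|)^{-3}$, multiplication by $\lambda^2 e^{w_{\lambda}}$ is a bounded map from $L^{\infty}(\mathbb{S}^2_{\lambda})$ into the space carrying $\norm{\cdot}_{\ast}$. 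Expanding $N(\phi)$ as in (\ref{eq:nonlinr}), using $\int e^{w_{\lambda}}\phi=0$ to write $\int e^{w_{\lambda}+\phi}=\int e^{w_{\lambda}}\big(1+O(\norm{\phi}_{\infty}^2)\big)$ and $\frac{\rho}{\int e^{w_{\lambda}}}=1+O(\lambda^2\ln\frac{1}{\lambda})$, I expect
\begin{equation*}
\norm{\lambda^2 N(\phi)}_{\ast}\leq C\Big(\norm{\phi}_{\infty}^2+\lambda^2\ln\tfrac{1}{\lambda}\,\norm{\phi}_{\infty}+\lambda^2\ln\tfrac{1}{\lambda}\Big),
\end{equation*}
\begin{equation*}
\norm{\lambda^2\big(N(\phi_1)-N(\phi_2)\big)}_{\ast}\leq C\big(\norm{\phi_1}_{\infty}+\norm{\phi_2}_{\infty}+\lambda^2\ln\tfrac{1}{\lambda}\big)\,\norm{\phi_1-\phi_2}_{\infty}.
\end{equation*}

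Third, with these in hand I would run the contraction on the ball
\begin{equation*}
\mathcal{B}=\Big\{\phi\in\mathcal{C}_{\ast}:\ \int_{\mathbb{S}^2_{\lambda}}e^{w_{\lambda}}\phi=0,\ \norm{\phi}_{\infty}\leq C_0\lambda\Big\}
\end{equation*}
for a fixed $C_0$. Combining $\norm{T(h)}_{\infty}\leq C\norm{h}_{\ast}$ with the two displays above yields, for $\phi\in\mathcal{B}$,
\begin{equation*}
\norm{\mathcal{A}(\phi)}_{\infty}\leq C\big(\lambda^2\ln\tfrac{1}{\lambda}+C_0^2\lambda^2\big)\leq C_0\lambda
\end{equation*}
once $\lambda$ is small, so $\mathcal{A}(\mathcal{B})\subset\mathcal{B}$; the Lipschitz bound gives $\norm{\mathcal{A}(\phi_1)-\mathcal{A}(\phi_2)}_{\infty}\leq C\big(C_0\lambda+\lambda^2\ln\tfrac{1}{\lambda}\big)\norm{\phi_1-\phi_2}_{\infty}$, a contraction for $\lambda$ small. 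The Banach fixed-point theorem then produces a unique $\phi\in\mathcal{B}$ with $\norm{\phi}_{\infty}\leq C\lambda$, with the scalars $c_j$ determined as in Proposition \ref{prop:solvability}. The main obstacle is the second step: establishing the nonlinear estimates in the weighted $\norm{\cdot}_{\ast}$ norm, namely that multiplication by $\lambda^2 e^{w_{\lambda}}$ maps $L^{\infty}$ boundedly into the $\ast$-space and that the normalization keeps $\int e^{w_{\lambda}+\phi}$ uniformly close to $\int e^{w_{\lambda}}$; once those are secured, the self-mapping and contraction properties follow mechanically from the $\lambda^2$ gain.
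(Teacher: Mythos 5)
Your proposal follows the paper's proof essentially step for step: the same fixed-point reformulation $\phi=T\left(-\lambda^2\left[S_{\rho}(w_{\lambda})+N(\phi)\right]\right)$ through the linear theory of Corollary \ref{cor:linear}, the same ball $\{\phi\in\mathcal{C}_{\ast}:\phi\perp e^{w_{\lambda}},\ \norm{\phi}_{\infty}\leq C\lambda\}$, and the same self-map and contraction estimates, so the architecture is sound and matches the paper.

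However, your key intermediate estimate for the error term is justified incorrectly. You claim $\norm{\lambda^2 S_{\rho}(w_{\lambda})}_{\ast}\leq C\lambda^2\ln\frac{1}{\lambda}$, attributing the dominant contribution to the inner term $\frac{\ln(1/\lambda)}{(1+|z|^2)^2}$ and calling everything else lower order. In fact that logarithmic term is harmless: against the weight $(1+|z_{\xi_j}|)^{-3}$ it contributes only $O(\ln\frac{1}{\lambda})$ to $\norm{S_{\rho}(w_{\lambda})}_{\ast}$. The term that actually dominates is $\frac{|z|^2}{(1+|z|^2)^2}$ from Lemma \ref{lm:error}, which decays like $|z|^{-2}$, i.e. more slowly than the weight $(1+|z|)^{-3}$; testing at $|z|\sim\lambda^{-2/3}$, where $(1+|z|)^{-3}\sim\lambda^2$ balances the additive $\lambda^2$ in the weight, gives a ratio of order $\lambda^{-2/3}$, so $\norm{S_{\rho}(w_{\lambda})}_{\ast}\geq c\,\lambda^{-2/3}\gg\ln\frac{1}{\lambda}$ and your claimed bound is false. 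The paper instead records the correct (weaker) bound $\norm{S_{\rho}(w_{\lambda})}_{\ast}\leq C\lambda^{-1}$, hence $\lambda^2\norm{S_{\rho}(w_{\lambda})}_{\ast}\leq C\lambda$. This slip does not sink your argument: a bound of size $C\lambda$ (indeed $C\lambda^{4/3}$) is all that is needed for the map to send the ball of radius $C_0\lambda$ into itself and to contract, which is exactly how the paper closes the proof; but it is this quadratic term, not the logarithmic one, that dictates the $O(\lambda)$ size of the fixed point, and your justification should be corrected accordingly.
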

\begin{proof}
We first rewrite the problem (\ref{eq:nonlini1})-(\ref{eq:invconst}) into a fixed point form:
\begin{equation}\label{eq:fixed}
\phi= T\left(-\lambda^2\left[S_{\rho}(w_{\lambda})+N(\phi)\right]\right)\equiv A(\phi) .
\end{equation}

For some constant $C>0$ sufficiently large, let us consider the region
\begin{equation*}
\mathcal{F}\equiv \{\phi\in\mathcal{C}_{\ast}\vert \phi\perp e^{w_{\lambda}}, \textrm{ }\norm{\phi}_{\infty}\leq C\lambda\}.
\end{equation*}
From Corollary \ref{cor:linear}, we have
\begin{equation*}
\norm{A(\phi)}_{\infty}\leq C\lambda^2\left[\norm{S_{\rho}(w_{\lambda})}_{\ast}+\norm{N(\phi)}_{\ast}\right].
\end{equation*}
By Lemma \ref{lm:error}, we have the following estimate
\begin{equation*}
\norm{S_{\rho}(w_{\lambda})}_{\ast}\leq C \frac{1}{\lambda}.
\end{equation*}
Also, the definition of $N$ in (\ref{eq:nonlinr}) immediately implies that
\begin{equation*}
\lambda^2\norm{N(\phi)}_{\ast}\leq C\lambda^2 \ln{\frac{1}{\lambda}}.
\end{equation*}
It is also immediate that $N$ satisfies the contraction condition
\begin{equation*}
\lambda^2\norm{N(\phi_1)-N(\phi_2)}_{\ast}\leq C\norm{\phi_1^2-\phi^2_2}_{\infty}+ C\lambda\norm{\phi_1-\phi_2}_{\infty}\leq C\lambda \norm{\phi_1-\phi_2}_{\infty}.
\end{equation*}
Hence we get
\begin{equation*}
\norm{A(\phi)}_{\infty}\leq C\lambda,
\end{equation*}
\begin{equation*}
\norm{A(\phi_1)-A(\phi_2)}_{\infty}\leq C\lambda\norm{\phi_1-\phi_2}_{\infty},
\end{equation*}
for sufficiently small $\lambda$.

Therefore, the operator $A$ is a contraction mapping of $\mathcal{F}$ if $\lambda\in (0,\lambda_0)$ where $\lambda_0$ is a constant small enough. The existence of a unique fixed point is guaranteed. This concludes the proof.
\end{proof}

\begin{lemma}\label{lm:consts}
For all $\phi$ found in Lemma \ref{lm:contraction}, we have
\begin{equation*}
c_j=c, \textrm{ }c_0=-\frac{\lambda^2}{\pi} \mathcal{A} c ,
\end{equation*}
for some constant $c$, where $c_j$'s, $j=1,2,3,4$ are coefficients in (\ref{eq:nonlini1})
 and 
\begin{equation*} 
 \mathcal{A}=\int_{\mathbb{R}^2}\chi_{R_1}\varphi_0 \frac{4}{(1+\lambda^2|z|^2)^2}dz.
 \end{equation*}
\end{lemma}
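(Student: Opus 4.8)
The plan is to prove the two assertions in turn: first that the four multipliers $c_j$ coincide, which is forced by the tetrahedral symmetry, and then that $c_0$ is slaved to their common value $c$ by integrating equation (\ref{eq:nonlini1}) over $\mathbb{S}^2_\lambda$.

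For $c_1=c_2=c_3=c_4$, I would observe that every term of (\ref{eq:nonlini1}) except the multiplier sum is $T_d$-invariant. Indeed $\phi\in\mathcal{C}_\ast$ is invariant by definition, $S_\rho(w_\lambda)$ is invariant by Lemma \ref{lm:error}, and $N(\phi)$ in (\ref{eq:nonlinr}) is invariant because it is assembled from $\phi$, $w_\lambda$ and the scalars $\int e^{w_\lambda+\phi}$, $\int e^{w_\lambda}$. Since $L=\lambda^2\mathcal{L}$ commutes with the $T_d$-action ($\Delta_g$ commutes with isometries and $e^{w_\lambda}$ is $T_d$-invariant), the quantity $L(\phi)+\lambda^2[S_\rho(w_\lambda)+N(\phi)]$ is $T_d$-invariant; by (\ref{eq:nonlini1}) it equals $\sum_{j}c_j\chi_{R_1,j}\varphi_{0,j}+c_0$, and since the constant $c_0$ is itself invariant, the sum $\sum_j c_j\chi_{R_1,j}\varphi_{0,j}$ must be $T_d$-invariant. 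Each $T\in T_d$ induces a permutation $\sigma\in S_4$ of the vertices via $T\xi_j=\xi_{\sigma(j)}$, and because $\chi_{R_1}$ and $\varphi_0$ are radial while $T$ is an isometry, $T$ carries $\chi_{R_1,j}\varphi_{0,j}$ to $\chi_{R_1,\sigma(j)}\varphi_{0,\sigma(j)}$. For $\lambda$ small the supports are pairwise disjoint, so these four functions are linearly independent; matching coefficients in the invariance relation yields $c_j=c_{\sigma(j)}$ for every permutation realized by $T_d$. As $T_d\cong S_4$ acts as the full symmetric group on the vertices, all $c_j$ equal a common value $c$.

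For the value of $c_0$, I would integrate (\ref{eq:nonlini1}) over $\mathbb{S}^2_\lambda$ exactly as in the derivation of (\ref{eq:normpr}). The left-hand side vanishes, $\int_{\mathbb{S}^2_\lambda}L(\phi)=0$, since $\int_{\mathbb{S}^2_\lambda}\Delta_g\phi=0$ and the constraint (\ref{eq:invconst}) kills the zeroth-order term. The data also integrates to zero: $\int S_\rho(w_\lambda)=\int\Delta_g w_\lambda+\rho(1-1)=0$, while $\int N(\phi)=\rho-\rho-(\int e^{w_\lambda}\phi-\int e^{w_\lambda}\phi)=0$ directly from (\ref{eq:nonlinr}). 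Hence only the multiplier terms survive. Using $c_j=c$ together with the area element $\frac{4}{(1+\lambda^2|z|^2)^2}\,dz$ of $\mathbb{S}^2_\lambda$ in the coordinate $z=\Pi_{\xi_j}(y)/\lambda$, which gives $\int_{\mathbb{S}^2_\lambda}\chi_{R_1,j}\varphi_{0,j}=\mathcal{A}$ for each $j$ and $\int_{\mathbb{S}^2_\lambda}c_0=4\pi c_0/\lambda^2$, the integrated identity becomes
\begin{equation*}
4c\,\mathcal{A}+\frac{4\pi c_0}{\lambda^2}=0,
\end{equation*}
so that $c_0=-\frac{\lambda^2}{\pi}\mathcal{A}c$.

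The two integral identities for the data and the evaluation of $\int_{\mathbb{S}^2_\lambda}\chi_{R_1,j}\varphi_{0,j}$ are routine. The step deserving the most care is the symmetry argument: one must check carefully that $T_d$ permutes the approximate dilation kernels $\chi_{R_1,j}\varphi_{0,j}$ precisely according to the $S_4$-action on the vertices, and that their disjoint supports render them linearly independent, so that matching coefficients is justified.
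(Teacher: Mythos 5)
Your proposal is correct and follows essentially the same two-step strategy as the paper: integrating (\ref{eq:nonlini1}) over $\mathbb{S}^2_{\lambda}$ to obtain $\mathcal{A}\sum_{j}c_j=-4\pi c_0/\lambda^2$, and invoking the $T_d$-invariance of the problem to force $c_1=c_2=c_3=c_4$. The only (harmless) difference is in how the symmetry is harvested: the paper compares the tested quantities $\langle L(\phi),\eta_{R_3,\xi_j}\varphi_{0,j}\rangle$ across $j$, whereas you note directly that the residual $\sum_j c_j\chi_{R_1,j}\varphi_{0,j}$ is itself $T_d$-invariant and match coefficients using the disjoint supports, which is a slightly more self-contained rendering of the same idea.
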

\begin{proof}
By integrating the equation (\ref{eq:nonlini1}), we have
\begin{equation}\label{eq:mean0}
\mathcal{A}\sum\limits_{j=1}^4 c_j=-\frac{4\pi c_0}{\lambda^2}.
\end{equation}

Since the problem (\ref{eq:nonlini1})-(\ref{eq:invconst}) is invariant under any orthogonal transformation $T\in T_d$, we have
\begin{equation}\label{eq:invc}
\langle L(\phi), \eta_{R_3,\xi_j}\varphi_{0,j}\rangle=\langle L(\phi), \eta_{R_3,\xi_k}\varphi_{0,k}\rangle, 
\end{equation}
for any $j\neq k$.

Then the lemma follows if we combine (\ref{eq:mean0}) and (\ref{eq:invc}).
\end{proof}

We also need to estimate the dependence of $\phi$ as a function of $\mathbb{S}^2$ on the parameter $\lambda$.
\begin{lemma}\label{lm:depenl}
The fixed point $\phi$ found in Lemma \ref{lm:contraction} satisfies
\begin{equation*}
\norm{\frac{\partial \phi}{\partial\lambda}}_{\infty}\leq C.
\end{equation*}
\end{lemma}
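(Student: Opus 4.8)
The plan is to differentiate the equation defining $\phi$ with respect to $\lambda$, to recognize the resulting equation for $\psi:=\partial_\lambda\phi$ as a linear problem of exactly the type solved in Section 4, and then to invoke the a priori estimate of Corollary \ref{cor:linear} together with the pointwise control of $\partial_\lambda w_\lambda$ furnished by Lemma \ref{lm:deriv}. Since the map $A$ in (\ref{eq:fixed}) depends smoothly on $(\phi,\lambda)$ and $D_\phi A$ is a contraction for small $\lambda$, the implicit function theorem guarantees that the fixed point $\phi=\phi_\lambda$ is $C^1$ in $\lambda$, so $\psi$ exists. Rather than differentiate the solution operator $T$ directly, I would divide the defining PDE by $\lambda^2$ and differentiate
\begin{equation*}
\mathcal{L}(\phi)=-\big[S_\rho(w_\lambda)+N(\phi)\big]+\frac{1}{\lambda^2}\Big[\sum_{j=1}^4 c_j\chi_{R_1,j}\varphi_{0,j}+c_0\Big],
\end{equation*}
with $\mathcal{L}$ as in (\ref{eq:linear2}). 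Isolating the term $N'(\phi)[\psi]$ produced by $\partial_\lambda N(\phi_\lambda)$ and multiplying back by $\lambda^2$ gives
\begin{equation*}
L(\psi)+\lambda^2 N'(\phi)[\psi]=\tilde h+\sum_{j=1}^4(\partial_\lambda c_j)\chi_{R_1,j}\varphi_{0,j}+\partial_\lambda c_0 ,
\end{equation*}
where $\tilde h$ gathers $-\lambda^2\partial_\lambda S_\rho(w_\lambda)$, the term from differentiating the potential $\tfrac{\rho}{\int e^{w_\lambda}}e^{w_\lambda}$ against $\phi$, and the terms $-2\lambda^{-1}\big[\sum_j c_j\chi_{R_1,j}\varphi_{0,j}+c_0\big]$ and $\sum_j c_j\,\partial_\lambda(\chi_{R_1,j}\varphi_{0,j})$ coming from the $\lambda^{-2}$ prefactor. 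The perturbation $\lambda^2 N'(\phi)$ has operator norm $O(\lambda)$ by the contraction estimate in Lemma \ref{lm:contraction}, so $L+\lambda^2 N'(\phi)$ inherits the solvability theory of $L$ from Proposition \ref{prop:solvability}; applying Corollary \ref{cor:linear} reduces everything to proving $\norm{\tilde h}_\ast\le C$ uniformly in $\lambda$, the new constants $\partial_\lambda c_j,\partial_\lambda c_0$ being estimated exactly as in the proof of Proposition \ref{prop:solvability} by testing against $\eta_{R_3,\xi_j}\varphi_{0,j}$.

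The decisive term is $\lambda^2\partial_\lambda S_\rho(w_\lambda)$. By the chain rule and (\ref{eq:linear3}),
\begin{equation*}
\partial_\lambda S_\rho(w_\lambda)=S_\rho'(w_\lambda)\big[\partial_\lambda w_\lambda\big]=\mathcal{L}\Big(\partial_\lambda w_\lambda-\frac{\int_{\mathbb{S}^2}e^{w_\lambda}\partial_\lambda w_\lambda}{\int_{\mathbb{S}^2}e^{w_\lambda}}\Big).
\end{equation*}
Here Lemma \ref{lm:deriv} is crucial: inside each ball $\partial_\lambda w_\lambda=\tfrac{1}{\lambda}\varphi_0+O(1)$, so $\lambda\,\partial_\lambda w_\lambda$ is to leading order the dilation mode $\varphi_0$, which is annihilated by the limiting operator $\tilde L$ of (\ref{eq:linearlim}). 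Concretely, differentiating (\ref{eq:bubble1}) shows that the singular $\tfrac{1}{\lambda}$ contributions to $\Delta_g\partial_\lambda w_\lambda$ and to $\tfrac{\rho}{\int e^{w_\lambda}}e^{w_\lambda}\,\partial_\lambda w_\lambda$ cancel on the bubble region, leaving $\mathcal{L}(\partial_\lambda w_\lambda-\mathrm{const})$ of size $O(e^{U_{\lambda,\xi_k}})$ plus lower order terms; multiplied by $\lambda^2$ this is $O\big((1+|z|^2)^{-2}\big)$, which is bounded in $\norm{\cdot}_\ast$ because $(1+|z|^2)^{-2}\le C(1+|z|)^{-3}$ matches the weight. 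The remaining pieces of $\tilde h$ are controlled directly from Lemma \ref{lm:error}, Lemma \ref{lm:deriv}, the bound $\norm{\phi}_\infty\le C\lambda$ of Lemma \ref{lm:contraction}, and the smallness $c_j=O(\lambda)$, $c_0=O(\lambda^3)$ that follows from the proof of Proposition \ref{prop:solvability}.

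Finally, the orthogonality conditions demand care because the test functions $\varphi_{0,j}\chi_{R_1,j}$ and $e^{w_\lambda}$, and the domain $\mathbb{S}^2_\lambda$ itself, depend on $\lambda$. Differentiating $\int\varphi_{0,j}\chi_{R_1,j}\phi=0$ and $\int e^{w_\lambda}\phi=0$ shows that $\psi$ satisfies these relations only up to errors of size $O(\norm{\phi}_\infty)=O(\lambda)$. I would absorb this by writing $\psi=\psi_0+\sum_{i,j}b_{ij}\varphi_{i,j}\chi_{R_1,j}+b\,e^{w_\lambda}$ with $\psi_0$ exactly orthogonal and coefficients $b_{ij},b=O(\lambda)$, applying the a priori estimate to $\psi_0$ and bounding the small correction directly; this yields $\norm{\psi}_\infty\le C$.

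The main obstacle is precisely the estimate of $\lambda^2\partial_\lambda S_\rho(w_\lambda)$: one must exploit that $\lambda\,\partial_\lambda w_\lambda$ is an approximate dilation kernel so that the nominally $O(1/\lambda)$ quantity $\mathcal{L}(\partial_\lambda w_\lambda-\mathrm{const})$ enjoys the cancellation making $\lambda^2$ times it bounded in the weighted norm. A secondary but genuine difficulty is the careful bookkeeping of the $\lambda$-dependence of $\mathbb{S}^2_\lambda$ and of the test functions, needed to show that the perturbed orthogonality conditions can be restored at the cost of only $O(\lambda)$ corrections.
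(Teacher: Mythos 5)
Your proposal is correct and follows essentially the same route as the paper's proof: differentiate the equation for $\phi$ in $\lambda$, recognize the result as the linear problem of Section 4, control the decisive term $\lambda^2\,\partial_\lambda S_\rho(w_\lambda)$ via Lemma \ref{lm:deriv} (your cancellation mechanism --- $\lambda\,\partial_\lambda w_\lambda\approx\varphi_0$, which is annihilated by the limit operator --- is exactly what underlies the paper's estimate (\ref{eq:error1})), absorb the part of $\partial_\lambda N(\phi)$ containing $\partial\phi/\partial\lambda$ through its $O(\lambda)$ operator norm (the paper does this on the right-hand side via (\ref{eq:hot1}) rather than as a perturbation of $L$, which is equivalent), and repair the $\lambda$-dependent orthogonality conditions by adding multiples of approximate kernel elements, just as the paper does with the coefficients $b_j$ and the decomposition (\ref{eq:normalize}). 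One quantitative claim in your write-up is off, though harmlessly so: since the rescaled test functions $\chi_{R_1,j}\varphi_{0,j}$ and $e^{w_\lambda}$ have $\lambda$-derivatives of size $O(1/\lambda)$, the orthogonality defects of $\psi=\partial\phi/\partial\lambda$ and the resulting correction terms are $O(1)$ in sup norm, not $O(\lambda)$ --- this is precisely why the paper's coefficients satisfy only $|b_j|\le C$ in (\ref{eq:constb}); the argument survives because the conclusion $\norm{\partial\phi/\partial\lambda}_{\infty}\le C$ requires only boundedness, not smallness, of these corrections. (Also, by the $T_d$ symmetry the translation-mode corrections $b_{ij}$, $i=1,2$, in your decomposition vanish automatically, so only the dilation modes and $e^{w_\lambda}$ are actually needed.)
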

\begin{proof}
We study the problem (\ref{eq:nonlini1})-(\ref{eq:invconst}) on $\mathbb{S}^2$:
\begin{equation*}
\mathcal{L}(\phi)=-\left[S_{\rho}(w_{\lambda})+N(\phi)\right]+\sum\limits_{j=1}^4 \frac{c_j}{\lambda^2}\chi_{R_1,j}(\frac{y}{\lambda})\varphi_0 (\frac{\Pi_{\xi_j}(y)}{\lambda})+\frac{c_0}{\lambda^2}\textrm{ for }y\in \mathbb{S}^2,
\end{equation*}
\begin{equation*}
\int_{\mathbb{S}^2} \phi \chi_{R_1,j}(\frac{y}{\lambda})\varphi_{0}(\frac{\Pi_{\xi_j}(y)}{\lambda})=0,
\end{equation*}
\begin{equation*}
\int_{\mathbb{S}^2}e^{w_{\lambda}}\phi=0,
\end{equation*}
and $\phi$ is invariant under any orthogonal transformation $T\in T_d$. We differentiate the above equation with respect to $\lambda$:
\begin{equation*}
\mathcal{L}(\frac{\partial\phi}{\partial\lambda})+ \frac{\partial (\frac{\rho}{\int_{\mathbb{S}^2}e^{w_{\lambda}}}e^{w_{\lambda}})}{\partial\lambda}\phi=-\left[\frac{\partial S_{\rho}(w_{\lambda})}{\partial\lambda}+\frac{\partial N(\phi)}{\partial\lambda}\right]+ \sum\limits_{j=1}^4 \frac{\partial c'_j}{\partial\lambda}\chi_{R_1,j}(\frac{y}{\lambda})\varphi_{0}(\frac{\Pi_{\xi_j}(y)}{\lambda})
\end{equation*}
\begin{equation*}
+\sum\limits_{j=1}^4 c'_j(-\frac{|\Pi_{\xi_j}(y)|}{\lambda^2}) \chi'_{R_1}(\frac{|\Pi_{\xi_j}(y)|}{\lambda})\varphi_0 (\frac{\Pi_{\xi_j}(y)}{\lambda})+ \sum\limits_{j=1}^4  c'_j\chi_{R_1,j}\frac{\partial \varphi_0(\frac{\Pi_{\xi_j}(y)}{\lambda})}{\partial\lambda}+ \frac{\partial c'_0}{\partial\lambda},
\end{equation*}
where $c'_j=\frac{c_j}{\lambda^2}$, for $j=0,\cdots,4$.

Again we blow up the sphere to $\mathbb{S}^2_{\lambda}$, then we have
\begin{equation}\label{eq:dependprb}
L(\frac{\partial\phi}{\partial\lambda})=-\lambda^2\left[\frac{\partial (\frac{\rho}{\int_{\mathbb{S}^2}e^{w_{\lambda}}}e^{w_{\lambda}})}{\partial\lambda}\phi+\frac{\partial S_{\rho}(w_{\lambda})}{\partial\lambda}+\frac{\partial N(\phi)}{\partial\lambda}-\frac{\partial c'_0}{\partial\lambda}\right]
\end{equation} 
\begin{equation*}
+\sum\limits_{j=1}^4 c'_j(-|\Pi_{\xi_j}(y)|) \chi'_{R_1}(\frac{|\Pi_{\xi_j}(y)|}{\lambda})\varphi_{0,j}+\sum\limits_{j=1}^4 \lambda^2 c'_j \chi_{R_1,j}\frac{\partial \varphi_{0,j}}{\partial\lambda}
\end{equation*}
\begin{equation*}
+\sum\limits_{j=1}^4 \lambda^2\frac{\partial c'_j}{\partial\lambda}\chi_{R_1,j}\varphi_{0,j},
\end{equation*}
and
\begin{equation*}
\int_{\mathbb{S}^2_{\lambda}} \frac{\partial\phi}{\partial\lambda}\chi_{R_1,j}\varphi_{0,j}=-\int_{\mathbb{S}^2_{\lambda}}\phi\left[(-\frac{|\Pi_{\xi_j}(y)|}{\lambda^2})\chi'_{R_1}(\frac{|\Pi_{\xi_j}(y)|}{\lambda})\varphi_{0,j}+ \chi_{R_1,j}\frac{\partial\varphi_{0,j}}{\partial\lambda}\right].
\end{equation*}
Employ the same argument in the proof of Proposition \ref{prop:solvability}, we have
\begin{equation*}
|c_j|\leq C\lambda.
\end{equation*}
Therefore, by integrating (\ref{eq:dependprb}), we have
\begin{equation}\label{eq:const1}
|\frac{\partial c'_0}{\partial\lambda}|\leq C, \textrm{  }|\frac{\partial c'_j}{\partial\lambda}|\leq \frac{C}{\lambda^2}.
\end{equation}

Furthermore, calculation shows that
\begin{equation}\label{eq:exp1}
\lambda^2\norm{\frac{\partial (\frac{\rho}{\int_{\mathbb{S}^2}e^{w_{\lambda}}}e^{w_{\lambda}})}{\partial\lambda}\phi}_{\ast}\leq C,
\end{equation}
\begin{equation}\label{eq:error1}
\lambda^2\norm{\frac{\partial S_{\rho}(w_{\lambda})}{\partial\lambda}}_{\ast}\leq C.
\end{equation}
Note that here we use Lemma \ref{lm:deriv} to derive the above two estimates.

It is also easy to check that
\begin{equation}\label{eq:const2}
\norm{\sum\limits_{j=1}^4 c'_j(-|x_{\xi_j}|) \chi'_{R_1}(|z_{\xi_j}|)\varphi_{0,j}+\sum\limits_{j=1}^4 c_j \chi_{R_1,j}\frac{\partial \varphi_{0,j}}{\partial\lambda}}_{\ast}\leq C.
\end{equation}

We now use the orthogonal condition
\begin{equation*}
\int_{\mathbb{S}^2_{\lambda}}e^{w_{\lambda}}\phi=0
\end{equation*}
together with (\ref{eq:expinner}) and (\ref{eq:exp1}) to derive that
\begin{equation}\label{eq:hot1}
\lambda^2\norm{\frac{\partial N(\phi)}{\partial\lambda}}_{\ast}\leq C+ C\lambda \norm{\frac{\partial \phi}{\partial\lambda}}_{\infty}.
\end{equation}

We set $b_j$ as follows
\begin{equation*}
b_j\int_{\mathbb{S}^2_{\lambda}}\chi_{R_1,j}|\varphi_{0,j}|^2=\int_{\mathbb{S}^2_{\lambda}}\phi\left[(-\frac{|z_{\xi_j}|}{\lambda})\chi'_{R_1}(|z_{\xi_j}|)\varphi_{0,j}+ \chi_{R_1,j}\frac{\partial\varphi_{0,j}}{\partial\lambda}\right].
\end{equation*}
We can easily verify that
\begin{equation}\label{eq:constb}
|b_j|\leq C.
\end{equation}
Consider the function $\tilde{h}$ defined as follows
\begin{equation*}
\tilde{h}=-\lambda^2\left[\frac{\partial (\frac{\rho}{\int_{\mathbb{S}^2}e^{w_{\lambda}}}e^{w_{\lambda}})}{\partial\lambda}\phi+\frac{\partial S_{\rho}(w_{\lambda})}{\partial\lambda}+\frac{\partial N(\phi)}{\partial\lambda}-\frac{\partial c'_0}{\partial\lambda}\right]
\end{equation*}
\begin{equation*}
+\sum\limits_{j=1}^4 c'_j(-|x_{\xi_j}|) \chi'_{R_1}(|z_{\xi_j}|)\varphi_{0,j}+\sum\limits_{j=1}^4 \lambda^2 c'_j \chi_{R_1,j}\frac{\partial \varphi_{0,j}}{\partial\lambda}
\end{equation*}
\begin{equation*}
-\sum\limits_{j=1}^4 b_j L(\eta_{R_3,\xi_j}\varphi_{0,j}).
\end{equation*}

If $\tilde{\psi}$ is the unique solution to the following problem:
\begin{equation*}
L(\tilde{\psi})=\tilde{h}+\sum\limits_{j=1}^4 d_j \chi_{R_1,j}\varphi_{0,j},
\end{equation*}
\begin{equation*}
\tilde{\psi}\perp \chi_{R_1,j}\varphi_{0,j},
\end{equation*}
\begin{equation*}
\tilde{\psi}\in \mathcal{C}_{\ast}.
\end{equation*}

Then, we can express $\frac{\partial\phi}{\partial\lambda}$ in terms of $\tilde{\psi}$, i.e.
\begin{equation}\label{eq:normalize}
\frac{\partial\phi}{\partial\lambda}=\tilde{\psi}+\sum\limits_{j=1}^4 b_j \eta_{R_3,\xi_j}\varphi_{0,j}.
\end{equation}

Finally, combine (\ref{eq:const1})-(\ref{eq:normalize}) and (\ref{eq:errperp}),  then apply Lemma \ref{lm:apriori}, we have
\begin{equation*}
\norm{\frac{\partial\phi}{\partial\lambda}}_{\infty}\leq C.
\end{equation*}
\end{proof}
\section{Solving the reduced problem}
\noindent In this section, we now turn to solve
\begin{equation*}
S_{\rho}(w_{\lambda}+\phi)=0.
\end{equation*}
\begin{lemma}\label{lm:energerr}
We calculate the energy of the $w_{\lambda}+\phi$ 
\begin{equation*}
J_{\rho}(w_{\lambda}+\phi)=J_{\rho}(w_{\lambda})+ O(\lambda^2),
\end{equation*}
where $\phi$ is found through the fixed point argument in Section 5.
\end{lemma}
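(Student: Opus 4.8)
The plan is to view the energy gap as a one–dimensional Taylor expansion along the segment from $w_\lambda$ to $w_\lambda+\phi$. Put $g(t)=J_\rho(w_\lambda+t\phi)$ for $t\in[0,1]$. Since the critical points of the functional $J_\rho$ in (\ref{eq:energy}) are exactly the solutions of $S_\rho=0$, differentiating under the integral sign gives $g'(t)=-\int_{\mathbb{S}^2}S_\rho(w_\lambda+t\phi)\,\phi$, and Taylor's formula with integral remainder yields
\begin{equation*}
J_\rho(w_\lambda+\phi)-J_\rho(w_\lambda)=g'(0)+\int_0^1(1-t)\,g''(t)\,dt .
\end{equation*}
It therefore suffices to prove that $g'(0)=o(\lambda^2)$ and that $\sup_{t\in[0,1]}|g''(t)|=O(\lambda^2)$.

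First I would estimate the linear term $g'(0)=-\int_{\mathbb{S}^2}S_\rho(w_\lambda)\,\phi$ by an $L^1$–$L^\infty$ pairing. Using Lemma \ref{lm:error} together with the area element $dA=4\lambda^2(1+\lambda^2|z|^2)^{-2}\,dz$ in the isothermal coordinates near each $\xi_k$, one integrates the pointwise bound on $S_\rho(w_\lambda)$ to obtain $\norm{S_\rho(w_\lambda)}_{L^1(\mathbb{S}^2)}=O(\lambda^2\ln\frac1\lambda)$; the only term that needs attention is $|z|^2(1+|z|^2)^{-2}$, whose integral against $dA$ carries a logarithm but still stays $O(\lambda^2\ln\frac1\lambda)$. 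Combined with $\norm{\phi}_\infty\le C\lambda$ from Lemma \ref{lm:contraction}, this gives $|g'(0)|\le\norm{S_\rho(w_\lambda)}_{L^1(\mathbb{S}^2)}\norm{\phi}_\infty=O(\lambda^3\ln\frac1\lambda)=o(\lambda^2)$.

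The crux is the uniform control of the second variation
\begin{equation*}
g''(t)=\int_{\mathbb{S}^2}|\nabla\phi|^2-\rho\,\frac{\int_{\mathbb{S}^2}e^{w_\lambda+t\phi}\phi^2}{\int_{\mathbb{S}^2}e^{w_\lambda+t\phi}}+\rho\,\frac{\big(\int_{\mathbb{S}^2}e^{w_\lambda+t\phi}\phi\big)^2}{\big(\int_{\mathbb{S}^2}e^{w_\lambda+t\phi}\big)^2}.
\end{equation*}
Because $\norm{\phi}_\infty\le C\lambda$ and $\int_{\mathbb{S}^2}e^{w_\lambda}=32\pi+O(\lambda^2\ln\lambda)$ by (\ref{eq:expint}), the weights obey $e^{t\phi}=1+O(\lambda)$, so the two zeroth–order integrals are $O(\lambda^2)$ uniformly in $t$. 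The delicate quantity is the Dirichlet energy $\int_{\mathbb{S}^2}|\nabla\phi|^2$, which I would bound by testing the equation for $\phi$ (the $\mathbb{S}^2$–form of (\ref{eq:nonlini1}) used in the proof of Lemma \ref{lm:depenl}) against $\phi$. Writing $\mathcal{L}$ as in (\ref{eq:linear2}) and integrating by parts gives
\begin{equation*}
\int_{\mathbb{S}^2}|\nabla\phi|^2=\frac{\rho}{\int_{\mathbb{S}^2}e^{w_\lambda}}\int_{\mathbb{S}^2}e^{w_\lambda}\phi^2+\langle S_\rho(w_\lambda),\phi\rangle+\langle N(\phi),\phi\rangle-\frac{c_0}{\lambda^2}\int_{\mathbb{S}^2}\phi ,
\end{equation*}
where the orthogonality $\phi\perp\chi_{R_1,j}\varphi_{0,j}$ built into $\mathcal{C}_\ast$ has annihilated the four $c_j$–terms and the normalization $\int_{\mathbb{S}^2}e^{w_\lambda}\phi=0$ of (\ref{eq:invconst}) has removed the cross term. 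Each surviving piece is $O(\lambda^2)$: the first from $\int e^{w_\lambda}\phi^2\le\norm{\phi}_\infty^2\int e^{w_\lambda}$, the second from the $L^1$–bound of the previous paragraph, the third from the expansion (\ref{eq:nonlinr}) of $N$, and the last from $|c_0|=O(\lambda^3)$ (Lemma \ref{lm:consts}, since $|c_j|\le C\lambda$ and $\mathcal{A}=O(1)$) together with $\int_{\mathbb{S}^2}\phi=O(\lambda)$. Hence $\int_{\mathbb{S}^2}|\nabla\phi|^2=O(\lambda^2)$, so $|g''(t)|=O(\lambda^2)$ and the lemma follows.

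I expect the main obstacle to be precisely this Dirichlet–energy bound: a crude estimate $\int_{\mathbb{S}^2}|\nabla\phi|^2\le\norm{\phi}_\infty\norm{\Delta_g\phi}_{L^1}$ would lose a logarithmic factor, so one must exploit the equation for $\phi$ and the orthogonality relations to cancel the borderline contributions — which is exactly what the reduced formulation was set up to allow. The secondary points requiring care are the bookkeeping of the Lagrange multipliers $c_j,c_0$ and the verification that the slowly decaying $|z|^2$–tail of $S_\rho(w_\lambda)$ does not upset the $L^1$ estimate.
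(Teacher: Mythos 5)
Your proof is correct and takes essentially the same route as the paper: the paper writes $J_\rho(w_\lambda+\phi)=J_\rho(w_\lambda)+\langle S_\rho(w_\lambda+\theta\phi),\phi\rangle$ (mean-value form) and expands $S_\rho(w_\lambda+\theta\phi)=S_\rho(w_\lambda)+\theta\Delta\phi+O(\lambda e^{w_\lambda})$, which is the same bookkeeping as your integral-remainder Taylor expansion, with $\langle S_\rho(w_\lambda),\phi\rangle$ handled by the same $L^1$--$L^\infty$ pairing. The decisive step is identical in both arguments: the borderline quantity $\int_{\mathbb{S}^2}|\nabla\phi|^2=-\langle\Delta\phi,\phi\rangle$ is bounded by $O(\lambda^2)$ via testing the reduced equation for $\phi$ against $\phi$, using the orthogonality built into $\mathcal{C}_{\ast}$ to annihilate the $c_j$-terms and the smallness of $c_0$ from Lemma \ref{lm:consts} for the constant term.
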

\begin{proof}
Expanding $J_{\rho}(w_{\lambda}+\phi)$ yields
\begin{equation}\label{eq:erre}
J_{\rho}(w_{\lambda}+\phi)=J_{\rho}(w_{\lambda})+\langle S_{\rho}(w_{\lambda}+\theta \phi),\phi\rangle_{\mathbb{S}^2},
\end{equation}
for some $\theta\in (0,1)$.

Let us try to estimate $S_{\rho}(w_{\lambda}+\theta\phi)$:
\begin{equation}\label{eq:erreq}
S_{\rho}(w_{\lambda}+\theta\phi)=S_{\rho}(w_{\lambda})+ \theta \Delta\phi+O(\lambda e^{w_{\lambda}}).
\end{equation}

By the fact that $\norm{\phi}_{\infty}\leq C\lambda$ and Lemma \ref{lm:error}, we have
\begin{equation*}
\langle S_{\rho}(w_{\lambda}),\phi\rangle_{\mathbb{S}^2}=o(\lambda^2).
\end{equation*}

It is easy to check that
\begin{equation*}
\int_{\mathbb{S}^2}|e^{w_{\lambda}}\phi|\leq C\lambda.
\end{equation*}

We only need to estimate the inner product of $\phi$ and the remaining term in (\ref{eq:erreq}):
\begin{eqnarray*}
|\langle \Delta\phi, \phi\rangle_{\mathbb{S}^2}|&=&|\langle\mathcal{L}(\phi),\phi\rangle_{\mathbb{S}^2}-\frac{\rho}{\int_{\mathbb{S}^2}e^{w_{\lambda}}}\langle e^{w_{\lambda}}\phi,\phi \rangle_{\mathbb{S}^2}|\\
&=&|\langle\mathcal{L}(\phi),\phi\rangle_{\mathbb{S}^2}|+ O(\lambda^2)\\
&=&|\sum\limits_{j=1}^4 c'_j\langle \chi_{R_1}(|z_{\xi_j}|)\varphi_0(z_{\xi_j}),\phi\rangle_{\mathbb{S}^2}+c'_0 \int_{\mathbb{S}^2}\phi|+O(\lambda^2)\\
&=&O(\lambda^2).
\end{eqnarray*}
Therefore, we have
\begin{equation*}
J_{\rho}(w_{\lambda}+\phi)= J_{\rho}(w_{\lambda})+O(\lambda^2).
\end{equation*}
\end{proof}
If we consider $J_{\rho}(w_{\lambda}+\phi)$ as a function of $\lambda$, then Lemma \ref{lm:energyest} and Lemma \ref{lm:energerr} imply that
\begin{equation*}
J_{\rho}(w_{\lambda})=-64\pi^2 \sum\limits_{j< k} G(\xi_j,\xi_k)-32\pi\ln{(4\pi)}+2\epsilon \ln{\lambda}
\end{equation*}
\begin{equation*}
+384\pi \lambda^2\ln{\lambda}-\epsilon\left(\ln{(\pi)}-4\pi \sum\limits_{j< k}G(\xi_j,\xi_k)\right)+O(\lambda^2).
\end{equation*}
By the standard degree theory, we have the following lemma concerning the critical point of $J_{\rho}(w_{\lambda}+\phi)$:
\begin{lemma}\label{lm:critical}
The energy $J_{\rho}(w_{\lambda}+\phi)$ is a $C^1$ function with respect to $\lambda$ for $\lambda\in (\lambda_1,\lambda_2)$.  Then, there exists a local maximum point $\lambda_{\ast}$ of the function $J_{\rho}(w_{\lambda}+\phi)$. Furthermore, we have
\begin{equation*}
\epsilon=(384\pi+o(1))\lambda^2_{\ast}\ln{\frac{1}{\lambda_{\ast}}}, \textrm{ as }\epsilon\rightarrow 0,
\end{equation*}
where $\rho=32\pi+\epsilon$ and $\epsilon\in (0,\epsilon_0)$.
\end{lemma}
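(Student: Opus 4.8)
The plan is to treat $g(\lambda):=J_{\rho}(w_{\lambda}+\phi)$, with $\phi=\phi_{\lambda}$ the fixed point constructed in Section 5, as a function of the single variable $\lambda$ on the interval $(\lambda_1,\lambda_2)$ cut out by (\ref{eq:rangepar}), and to produce an interior critical point from its leading-order expansion. First I would record that $g\in C^1(\lambda_1,\lambda_2)$: the functional $J_{\rho}$ is smooth, the map $\lambda\mapsto w_{\lambda}$ is differentiable with $\partial_{\lambda}w_{\lambda}$ controlled by Lemma \ref{lm:deriv}, and $\lambda\mapsto\phi_{\lambda}$ is differentiable with $\norm{\partial_{\lambda}\phi}_{\infty}\leq C$ by Lemma \ref{lm:depenl}; hence $\lambda\mapsto w_{\lambda}+\phi_{\lambda}$ is a $C^1$ curve into the relevant space and $g$ is $C^1$.

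Next I would establish the differentiated form of the energy. Lemmas \ref{lm:energyest} and \ref{lm:energerr} give
\[
g(\lambda)=C(\epsilon)+2\epsilon\ln\lambda+384\pi\lambda^2\ln\lambda+O(\lambda^2),
\]
with $C(\epsilon)$ independent of $\lambda$, so the formal derivative of the explicit part is $\frac{2\epsilon}{\lambda}+768\pi\lambda\ln\lambda+384\pi\lambda$. To legitimize differentiating the $O(\lambda^2)$ remainder I would use the exact identity $g'(\lambda)=-\langle S_{\rho}(w_{\lambda}+\phi),\partial_{\lambda}(w_{\lambda}+\phi)\rangle$ (the first variation of $J_{\rho}$ is $-S_{\rho}$) together with the fact that, by (\ref{eq:nonlini1}) and the constraint (\ref{eq:invconst}), one has $S_{\rho}(w_{\lambda}+\phi)=\lambda^{-2}[\sum_{j}c_j\chi_{R_1,j}\varphi_{0,j}+c_0]$; feeding in $\partial_{\lambda}w_{\lambda}\sim\lambda^{-1}\varphi_{0}$ near each $\xi_k$ (Lemma \ref{lm:deriv}) and $\norm{\partial_{\lambda}\phi}_{\infty}\leq C$ (Lemma \ref{lm:depenl}) keeps the remainder's contribution at $o(\lambda\ln\frac{1}{\lambda})$. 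Thus
\[
g'(\lambda)=\frac{2\epsilon}{\lambda}+768\pi\lambda\ln\lambda+o\!\left(\lambda\ln\frac{1}{\lambda}\right).
\]

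Then I would carry out the endpoint sign analysis dictated by (\ref{eq:rangepar}). At the left endpoint $\lambda=\lambda_1$, where $\epsilon=768\pi\lambda_1^2\ln\frac{1}{\lambda_1}$, the two leading terms combine to $g'(\lambda_1)=768\pi\lambda_1\ln\frac{1}{\lambda_1}+o(\lambda_1\ln\frac{1}{\lambda_1})>0$; at the right endpoint $\lambda=\lambda_2$, where $\epsilon=192\pi\lambda_2^2\ln\frac{1}{\lambda_2}$, they give $g'(\lambda_2)=-384\pi\lambda_2\ln\frac{1}{\lambda_2}+o(\lambda_2\ln\frac{1}{\lambda_2})<0$, for $\epsilon$ small. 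Since $g'$ is continuous and changes sign, it vanishes at some interior $\lambda_{\ast}$; because $g'>0$ near $\lambda_1$ and $g'<0$ near $\lambda_2$, such a $\lambda_{\ast}$ can be taken to be a local maximum of $g$ (this is the one-dimensional degree count being invoked, and via Lemma \ref{lm:consts} the relation $g'(\lambda_{\ast})=0$ forces the multipliers $c_j,c_0$ to vanish, so that $w_{\lambda_{\ast}}+\phi$ is a genuine solution). Solving $g'(\lambda_{\ast})=0$ finally yields $2\epsilon=768\pi\lambda_{\ast}^2\ln\frac{1}{\lambda_{\ast}}+o(\lambda_{\ast}^2\ln\frac{1}{\lambda_{\ast}})$, i.e. $\epsilon=(384\pi+o(1))\lambda_{\ast}^2\ln\frac{1}{\lambda_{\ast}}$.

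The main obstacle is exactly the differentiation of the energy remainder in the second step: an $O(\lambda^2)$ bound on $g(\lambda)$ minus its explicit main part does not by itself control $g'$ below the sign-determining scale $\lambda\ln\frac{1}{\lambda}$, so one cannot simply differentiate the value estimate. This is precisely why Lemmas \ref{lm:deriv} and \ref{lm:depenl} were proved, and I would use them, organized through the variational identity above, to differentiate through the Lyapunov--Schmidt reduction itself and certify that $|g'(\lambda)-\frac{2\epsilon}{\lambda}-768\pi\lambda\ln\lambda|=o(\lambda\ln\frac{1}{\lambda})$ uniformly on $(\lambda_1,\lambda_2)$; with that in hand the endpoint signs are robust and the intermediate value argument producing $\lambda_{\ast}$ is rigorous.
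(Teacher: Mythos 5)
Your skeleton (expand $g(\lambda):=J_{\rho}(w_{\lambda}+\phi)$ via Lemmas \ref{lm:energyest} and \ref{lm:energerr}, find an interior critical point on $(\lambda_1,\lambda_2)$, read off the relation between $\epsilon$ and $\lambda_{\ast}$) is right, and your endpoint arithmetic is correct; but the pivotal claim, the uniform expansion $g'(\lambda)=\frac{2\epsilon}{\lambda}+768\pi\lambda\ln\lambda+o\bigl(\lambda\ln\frac{1}{\lambda}\bigr)$, is asserted rather than proved, and the tools you invoke cannot deliver it. The identity $g'(\lambda)=-\langle S_{\rho}(w_{\lambda}+\phi),\partial_{\lambda}(w_{\lambda}+\phi)\rangle$ together with $S_{\rho}(w_{\lambda}+\phi)=\lambda^{-2}\bigl[\sum_j c_j\chi_{R_1,j}\varphi_{0,j}+c_0\bigr]$ expresses $g'$ in terms of the unknown multipliers: feeding in Lemmas \ref{lm:deriv} and \ref{lm:depenl} only yields $g'(\lambda)=-\kappa\,c(\lambda)/\lambda\,(1+o(1))$ for a fixed $\kappa\neq 0$ (this is exactly the computation in Lemma \ref{lm:conc}). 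To turn that into an expansion in $\epsilon$ and $\lambda$ you would still have to determine the asymptotics of $c(\lambda)$, i.e.\ project (\ref{eq:nonlini1}) against $\chi_{R_1,j}\varphi_{0,j}$ and evaluate $\lambda^{2}\langle S_{\rho}(w_{\lambda}),\chi_{R_1,j}\varphi_{0,j}\rangle$ with \emph{signed} precision $o\bigl(\lambda^{2}\ln\frac{1}{\lambda}\bigr)$ --- a genuinely new computation (delicate, since for instance $\int_{\mathbb{R}^2}e^{V_0}\varphi_0\,dz=0$, so the radial part of the error contributes only through the cutoff, and the Hessian term in (\ref{eq:expinner}) must be tracked). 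The estimates actually available, Lemma \ref{lm:error} for $|S_{\rho}(w_{\lambda})|$ and Lemma \ref{lm:deriv} for $|\partial_{\lambda}w_{\lambda}|$, are absolute-value bounds whose product integrates to exactly $O\bigl(\lambda\ln\frac{1}{\lambda}\bigr)$ --- the same order as the terms whose signs decide your endpoint analysis --- so they give no smallness and cannot certify the expansion. This is a genuine gap, located precisely at the step you yourself flagged as the main obstacle.

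The gap can be avoided entirely, and this is what the paper's appeal to ``standard degree theory'' amounts to: work with values, not derivatives. Write $g(\lambda)=C(\epsilon)+h(\lambda)+O(\lambda^{2})$ with $h(\lambda)=2\epsilon\ln\lambda+384\pi\lambda^{2}\ln\lambda$, and let $\lambda_m$ be the (interior) maximum point of the explicit function $h$ on $[\lambda_1,\lambda_2]$. A direct computation gives $h(\lambda_m)-h(\lambda_i)\geq c\,\lambda_i^{2}\ln\frac{1}{\lambda_i}$ for $i=1,2$, and more generally $h(\lambda_m)-h(\lambda)\geq c(\delta)\,\lambda^{2}\ln\frac{1}{\lambda}$ whenever $\bigl|\epsilon-384\pi\lambda^{2}\ln\frac{1}{\lambda}\bigr|\geq\delta\,\lambda^{2}\ln\frac{1}{\lambda}$; both gaps dominate the $O(\lambda^{2})$ uncertainty in the expansion of $g$. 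Hence the maximum of $g$ over $[\lambda_1,\lambda_2]$ is attained at an interior point $\lambda_{\ast}$, which is therefore a local maximum and (since $g\in C^{1}$, which your first paragraph correctly justifies via Lemmas \ref{lm:deriv} and \ref{lm:depenl}) a critical point, and the refined comparison forces $\epsilon=(384\pi+o(1))\lambda_{\ast}^{2}\ln\frac{1}{\lambda_{\ast}}$ --- no control on $g'$ beyond continuity is needed. If you prefer to keep your intermediate-value argument on $g'$, you must first carry out the projection computation described above; otherwise, switch to this value-comparison argument, for which the estimates already proved in the paper suffice.
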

Finally, we conclude the proof of Theorem {\ref{th:mainresult}} by the following lemma.
\begin{lemma}\label{lm:conc}
When $\lambda=\lambda_{\ast}$, we have
\begin{equation*}
c'_j=0
\end{equation*}
for $j=0,\cdots,4$, where
\begin{equation*}
S_{\rho}(w_{\lambda}+\phi)=\sum\limits_{j=1}^4 c'_j \chi_{R_1}(|z_{\xi_j}|)\varphi_{0}(z_{\xi_j})+c'_0.
\end{equation*}
\end{lemma}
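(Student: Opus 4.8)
The plan is to exploit the variational characterisation of the reduction parameter $\lambda$. Write $\mathcal{J}(\lambda):=J_{\rho}(w_{\lambda}+\phi)$ for the reduced energy, where $\phi=\phi_{\lambda}$ is the fixed point of Lemma \ref{lm:contraction}; it is $C^1$ in $\lambda$ by Lemma \ref{lm:depenl}. By Lemma \ref{lm:critical}, $\lambda_{\ast}$ is a critical point, so $\mathcal{J}'(\lambda_{\ast})=0$. First I would differentiate $\mathcal{J}$ by the chain rule. Since the critical points of $J_{\rho}$ in \eqref{eq:energy} are exactly the solutions of \eqref{eq:mfeS2}, integration by parts on $\mathbb{S}^2$ gives $\langle J'_{\rho}(u),v\rangle=-\langle S_{\rho}(u),v\rangle$, whence
\begin{equation*}
\mathcal{J}'(\lambda)=\Big\langle J'_{\rho}(w_{\lambda}+\phi),\ \tfrac{\partial}{\partial\lambda}(w_{\lambda}+\phi)\Big\rangle=-\Big\langle S_{\rho}(w_{\lambda}+\phi),\ \tfrac{\partial}{\partial\lambda}(w_{\lambda}+\phi)\Big\rangle.
\end{equation*}

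Next I would insert the decomposition from the statement, $S_{\rho}(w_{\lambda}+\phi)=\sum_{j=1}^4 c'_j\chi_{R_1,j}\varphi_{0,j}+c'_0$. The crucial simplification is Lemma \ref{lm:consts}: the tetrahedral symmetry collapses everything to a single scalar, $c'_j=c'$ for $j=1,\dots,4$ and $c'_0=-\frac{\lambda^2}{\pi}\mathcal{A}c'$. (This is also the reason the reduction is genuinely one dimensional: the translation kernels $\varphi_{1,j},\varphi_{2,j}$ are excluded by the symmetry, leaving only the common dilation parametrised by $\lambda$.) Hence $S_{\rho}(w_{\lambda}+\phi)=c'\big(\sum_j\chi_{R_1,j}\varphi_{0,j}-\frac{\lambda^2}{\pi}\mathcal{A}\big)$ and
\begin{equation*}
\mathcal{J}'(\lambda)=-c'(\lambda)\,K(\lambda),\qquad K(\lambda):=\Big\langle \sum_{j=1}^4\chi_{R_1,j}\varphi_{0,j}-\tfrac{\lambda^2}{\pi}\mathcal{A},\ \tfrac{\partial}{\partial\lambda}(w_{\lambda}+\phi)\Big\rangle.
\end{equation*}
It therefore suffices to show that $K(\lambda)\neq 0$ for all small $\lambda$; then $\mathcal{J}'(\lambda_{\ast})=0$ forces $c'(\lambda_{\ast})=0$, and $c'_j=0$ for every $j$ follows at once from Lemma \ref{lm:consts}.

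The hard part is the evaluation of $K(\lambda)$, and this is where the explicit structure of $\partial_{\lambda}w_{\lambda}$ enters. By Lemma \ref{lm:deriv}, near each $\xi_j$ one has $\frac{\partial w_{\lambda}}{\partial\lambda}=\frac{1}{\lambda}\varphi_0(z)+O(1)$ with $z=\Pi_{\xi_j}(y)/\lambda$, because $\varphi_0(z)=\frac{2(|z|^2-1)}{1+|z|^2}$ is precisely the dilation kernel; this is the mechanism by which the reduced energy detects the dilation multiplier. Using $dA=\frac{4\lambda^2}{(1+\lambda^2|z|^2)^2}\,dz$ on the support of $\chi_{R_1,j}$, the first group of terms yields $\sum_j\langle\chi_{R_1,j}\varphi_{0,j},\partial_{\lambda}w_{\lambda}\rangle=\lambda\sum_j\int\chi_{R_1}|\varphi_0|^2\frac{4}{(1+\lambda^2|z|^2)^2}\,dz+O(\lambda^2)$, a definite multiple of $\lambda$. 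For the constant piece I would use $\int_{\mathbb{S}^2}w_{\lambda}=4\pi\overline{w}_{\lambda}$ (as $\tilde{w}_{\lambda}$ has zero mean), so $\int_{\mathbb{S}^2}\partial_{\lambda}w_{\lambda}\,dA=4\pi\,\partial_{\lambda}\overline{w}_{\lambda}=\frac{8\pi}{\lambda}$; combined with $c'_0=-\frac{\lambda^2}{\pi}\mathcal{A}c'$ this contributes a further multiple of $\lambda$. The contributions of $\partial_{\lambda}\phi$ are controlled by $\norm{\partial_{\lambda}\phi}_{\infty}\leq C$ from Lemma \ref{lm:depenl} and are $O(\lambda^2)$, being integrated against functions living on $z$-regions of fixed size against the $\lambda^2$ area factor (and $c'_0$ is itself $O(\lambda^2)$). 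Collecting, $K(\lambda)=\kappa\lambda(1+o(1))$ with $\kappa$ an explicit constant, and a direct computation of the resulting one-dimensional $\varphi_0$-integrals shows $\kappa\neq0$ (for $R_1$ large and fixed a leading logarithmic term survives and does not cancel); this nondegeneracy is exactly what is required and is the one delicate point of the argument.

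Finally I would conclude: since $K(\lambda)=\kappa\lambda(1+o(1))$ with $\kappa\neq0$, we have $K(\lambda_{\ast})\neq0$ for $\lambda_{\ast}$ small, so the identity $\mathcal{J}'(\lambda_{\ast})=-c'(\lambda_{\ast})K(\lambda_{\ast})$ together with $\mathcal{J}'(\lambda_{\ast})=0$ forces $c'(\lambda_{\ast})=0$. By Lemma \ref{lm:consts} this gives $c'_j=0$ for $j=1,\dots,4$ and $c'_0=0$, whence $w_{\lambda_{\ast}}+\phi$ is a genuine solution of \eqref{eq:mfeS2} and the proof of Theorem \ref{th:mainresult} is complete.
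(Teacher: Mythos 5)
Your proposal is correct and follows essentially the same route as the paper: differentiate the reduced energy at the critical point $\lambda_{\ast}$, insert the decomposition of $S_{\rho}(w_{\lambda}+\phi)$, collapse all multipliers to a single scalar via Lemma \ref{lm:consts}, evaluate the pairing against $\partial_{\lambda}(w_{\lambda}+\phi)$ using Lemma \ref{lm:deriv} and $\norm{\partial_{\lambda}\phi}_{\infty}\leq C$, and conclude from a nondegeneracy of the resulting constant for $R_1$ large and fixed. The only divergence is in the bookkeeping of the constant multiplier: you evaluate its pairing exactly via $\int_{\mathbb{S}^2}\partial_{\lambda}w_{\lambda}=4\pi\,\partial_{\lambda}\overline{w}_{\lambda}=8\pi/\lambda$, obtaining the combination $\mathcal{B}-2\mathcal{A}$ whose nonvanishing is indeed governed by a surviving logarithmic term in $R_1$ (the quadratic terms cancel since $\varphi_0\to 2$), whereas the paper splits the integral into inner, cut-off and outer regions and arrives at $\mathcal{B}-2\mathcal{A}\left(1-\frac{\mathcal{C}}{\pi}\right)$; both constants are nonzero for $R_1$ large, so the conclusion $c'_j=0$ is reached either way.
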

\begin{proof}
Since $\lambda_{\ast}$ is a critical point of the function $J_{\rho}(w_{\lambda}+\phi)$, we have
\begin{equation*}
\frac{\partial J_{\rho}(w_{\lambda}+\phi)}{\partial\lambda}\big\vert_{\lambda=\lambda_{\ast}}=\langle S_{\rho}(w_{\lambda}+\phi), \frac{\partial (w_{\lambda}+\phi)}{\partial\lambda}\rangle_{\mathbb{S}^2}\bigg\vert_{\lambda=\lambda_{\ast}}=0. 
\end{equation*}
Computation shows that:
\begin{equation*}
\langle S_{\rho}(w_{\lambda}+\phi), \frac{\partial (w_{\lambda}+\phi)}{\partial\lambda}\rangle_{\mathbb{S}^2}=\int_{\mathbb{S}^2}\left[\sum\limits_{j=1}^4 c'_j \chi_{R_1}(|z_{\xi_j}|)\varphi_0(z_{\xi_j})+ c'_0\right]\left(\frac{\partial w_{\lambda}}{\partial\lambda}+\frac{\partial\phi}{\partial\lambda}\right)
\end{equation*}
\begin{equation*}
=\sum\limits_{j=1}^4 c_j\int_{\mathbb{S}^2_{\lambda}}\chi_{R_1,j}\varphi_{0,j}\left(\frac{\varphi_{0,j}}{\lambda}+O(1)\right) + c_0 \sum\limits_{j=1}^4 \int_{\mathbb{S}^2_{\lambda}} \chi_{R_1,j}\left(\frac{\varphi_{0,j}}{\lambda}+O(1)\right) 
\end{equation*}
\begin{equation*}
+c'_0\int_{\mathbb{S}^2\setminus (\bigcup\limits_{j=1}^4 \Pi_{\xi_j}(B(0,R_0)))}\left(\frac{2}{\lambda}+O(1)\right)dx
\end{equation*}
\begin{equation*}
=\left(\frac{\mathcal{B}}{\lambda}+O(1)\right)\sum\limits_{j=1}^4 c_j + \left(\frac{\mathcal{A}}{\lambda}+O(1)\right) c_0 +   \left(\frac{8(\pi-\mathcal{C})}{\lambda}+O(1)\right) c'_0,
\end{equation*}
where $\mathcal{B}$ and $\mathcal{C}$ are the following constants 
\begin{equation*}
\mathcal{B}=\int_{\mathbb{R}^2}\chi_{R_1}|\varphi_0|^2 \frac{4}{(1+\lambda^2|z|^2)^2}dz,
\end{equation*}
\begin{equation*}
\mathcal{C}=\int_{B(0,R_0)}\frac{4}{(1+|x|^2)^2}dx.
\end{equation*}

We know from Lemma \ref{lm:consts} and the above calculations that
\begin{equation*}
\left(\frac{\mathcal{B}}{\lambda}-\frac{2\mathcal{A}(1-\frac{\mathcal{C}}{\pi})}{\lambda}+ O(1)\right)c=0.
\end{equation*}

It is easy to see that by choosing $R_1$ sufficiently large, we have
\begin{equation*}
\mathcal{B}-2\mathcal{A}(1-\frac{\mathcal{C}}{\pi})\neq 0.
\end{equation*}

Therefore, we have
\begin{equation*}
c_j=0
\end{equation*}
for all $j=0,\cdots,4$.

Finally, we get the $\phi_{\ast}$ associated to $\lambda_{\ast}$ such that
\begin{equation*}
S_{\rho}(w_{\lambda_{\ast}}+\phi_{\ast})=0.
\end{equation*}

The exact blow-up solution  $w_{\lambda_{\ast}}+\phi_{\ast}$ of the equation (\ref{eq:mfeS2}) is found.
\end{proof}
\nocite{*}
\bibliographystyle{plain}
\bibliography{bumfes2}
\end{document}